\colorlet{perso}{blue!50!cyan}
\definecolor{amber}{rgb}{1.0, 0.75, 0.0}
\newtheorem{Definition}{Definition}[section]
\newtheorem{Theoreme}[Definition]{Theorem}
\newtheorem{Proposition}[Definition]{Proposition}
\newtheorem{Notation}[Definition]{Notation}
\newtheorem{Lemma}[Definition]{Lemma}
\newtheorem{Remarque}[Definition]{Remark}
\crefname{figure}{fig.}{Fig.}
\crefname{section}{sec.}{Sec.}
\newcommand{\ZZ}{\mathbb{Z}}
\newcommand{\RR}{\mathbb{R}}
\newcommand{\NNN}{\mathbb{Z}_{>0}}
\newcommand{\NN}{\mathbb{N}}
\newcommand{\entn}{\{1,\ldots,n\}}
\newcommand{\entk}{\{1,\ldots,\kappa\}}
\def \ent#1#2{\{#1,\ldots,#2\}}
\newcommand{\zerun}{[0,1]}
\newcommand*\Bell{\ensuremath{\boldsymbol\ell}}
\newcommand*\Bcp{\ensuremath{\boldsymbol\cp}}
\DeclareMathOperator{\AP}{AP}
\DeclareMathOperator{\cotan}{cotan}
\newcommand{\Leb}{\mathsf{Leb}}
\newcommand{\ST}{{\sf SubTan}_{K}}
\newcommand{\zn}{\mathbf{z}[n]}
\newcommand{\bz}{\mathbf{z}}
\newcommand{\zzn}{{z}[n]}
\newcommand{\Aff}{\mathsf{Aff}}
\newcommand{\wj}{j+1}
\newcommand{\CH}{{\sf CH}}
\newcommand{\ie}{\textit{i.e. }}
\newcommand{\cvg}{\underset{n\to\infty}{\longrightarrow}}
\def \sur#1#2{\mathrel{\mathop{\kern 0pt#1}\limits^{#2}}}
\def \norm#1{\left\lVert #1 \right\rVert}
\def \indic#1{\mathbb{1}_{\left\{#1\right\}}}
\def \flr#1{\left\lfloor#1\right\rfloor}
\def \abso#1{\left\vert#1\right\vert}
\def \proba{\xrightarrow[n]{(proba.)}}
\def \dd{\xrightarrow[n]{(d)}}
\def \Area#1{{\sf Area}\left(#1\right)}
\def \P{\mathbb{P}}
\def \Dom#1{{\sf Dom}(#1)}
\def \Tang#1{{\sf Tangency}_{\kappa}(#1)}
\newcommand{\Ck}{\mathfrak{C}_\kappa}
\newcommand{\ECP}{\mathsf{ECP}}
\newcommand{\PCP}{\mathsf{PCP}}
\newcommand{\bb}{\mathsf{b}}
\newcommand{\corner}{\mathsf{corner}}
\newcommand{\sk}{\mathbf{s}^{(n)}[\kappa]}
\newcommand{\ssk}{s[\kappa]}
\newcommand{\Nkn}{\mathbb{N}_\kappa(n)}
\newcommand{\bNkn}{\widebar{\mathbb{N}_{\kappa-1}}(n)}
\newcommand{\lk}{\Bell^{(n)}[\kappa]}
\def \cp{{\sf cp}}
\newcommand{\Lj}{\ell[\kappa]}
\newcommand{\Cj}{c[\kappa]}
\newcommand{\ve}{{\sf v}}
\newcommand{\pp}{{\sf p}}
\newcommand{\sth}{\sin(\theta_\kappa)}
\newcommand{\cl}{\mathfrak{cl}}
\newcommand{\pk}{\mathbb{P}_\kappa(n)}
\newcommand{\ptK}{\widetilde{\mathbb{P}}_K(n)}
\newcommand{\QtnK}{{\mathsf{D}}^{(n)}_{K}}
\newcommand{\UnK}{\mathsf{U}^{(n)}_{K}}
\newcommand{\tst}{\text{ such that }}
\newcommand{\pK}{\mathbb{P}_K(n)}
\newcommand{\CVnK}{{\mathcal{C}_K}(n)}
\newcommand{\Faktor}{{\sf Faktor}}
\newcommand{\snj}{\mathbf{s}^{(n)}_{j}}
\newcommand{\polyk}{\mathbf{P}_\kappa}
\newcommand{\polyktau}{\mathbf{P}_{\kappa}^{\mathcal{T}}}
\newcommand{\polytau}{\mathbf{P}^{\mathcal{T}}}
\newcommand{\Ktau}{K_{\mathcal{T}}}
\begin{document}

	\author{Ludovic Morin}
	\date{\footnotesize Univ. Bordeaux, CNRS, Bordeaux INP, LaBRI, UMR 5800, F-33400 Talence, France}
	\title{Probability that $n$ points are in convex position in a general convex polygon :\\ Asymptotic results.}
	\maketitle

	\subsection*{Abstract}

	Let $\pK$ be the probability that $n$ points $z_1,\ldots,z_n$ picked uniformly and independently in $K$, a compact convex polygon in $\RR^2$ with non-empty interior, are in convex position, that is, form the vertex set of a convex polygon.
	In this paper, we determine the exact asymptotic behavior of $\pK$ as $n\to+\infty$. This improves on a famous result of Bárány \cite{barany2} (yet valid for a general convex set $K$) and a result initiated in the case where $K$ is a regular convex polygon \cite{Morin}.

	\section{Introduction}
	
	All along the paper, for any integer $\kappa$, $x[\kappa]$ will stand for $(x_1,\ldots,x_\kappa)$ whatever the type of the $x_i$.\par
	
	For any compact convex set $K$ in $\RR^2$ with non-empty interior and for any $n\in\NN$, we let $\mathsf{U}_K^{(n)}$ denote the law of an $n$-tuple $\zn:=(\bz_1,\cdots,\bz_n)$, where the $\bz_i$ are independent and identically distributed (i.i.d.) and uniform in $K$. We let $\mathbb{P}_K(n)$ denote the probability that $\{\bz_1,\cdots,\bz_n\}$ forms the vertex set of a convex polygon.\par
	In the paper, we will mainly be interested in establishing new asymptotic results for $\P_K(n)$ in the case where $K$ is a convex polygon.
	It will be convenient to fix a canonical description of a polygon $K$, and to suppose that $K$ is placed somewhere precisely in the plane. This does not alter the degree of generality of our results, since any inversible affine map $\Aff$ preserves convexity and the uniform distribution, so that $\P_K(n)=\P_{\Aff(K)}(n)$.\\
	
	\paragraph{Notation.} In the sequel, $\kappa\geq3$ is considered to be fixed. We will work quite a lot with indices $j$ running through the set of integers $\entk$. By convention, in the case $j=1$, $j-1$ stands for $\kappa$, and when $j=\kappa$, $j+1$ stands for 1 (we do so to avoid tedious notation).
	
	We let then $\polyk$ be the set of polygons $K$ with non-empty interior having $\kappa$ vertices (vertices are extremal points here), included in $\RR^+\times \RR$, avoiding $(-\infty,0)\times\{0\}$,  having a vertex at $(0,0)$, and a side included in $[0,+\infty)\times \{0\}$, as on \Cref{chap3:fig1}. In addition, let $\mathbf{P}$ be the union on $\kappa$ of all $\polyk$ (the set of all polygons with non-empty interior).\par
	
	We now define ``canonical values'' associated with a polygon $K$ in ${\bf P}_\kappa$ (these values are summarized in \Cref{chap3:fig1}). The vertices $(\ve_1,\cdots,\ve_{\kappa})$ of $K$ are fixed as follows:
	we set $\ve_{1}=(0,0)$, $\ve_2\in[0,+\infty)\times\{0\}$ is the vertices on the horizontal line, and the other vertices are taken counterclockwise around $K$ as on Fig.1.
	We let $r_1,\ldots,r_\kappa$ be the successive lengths of its sides, so that the length $\norm{\ve_j-\ve_{\wj}}$ is $r_j$, for all $j\in\entk$ (where $\norm{x-y}$ is the Euclidean distance between $x$ and $y$), and let $\theta_j$ be the internal angle between the $j^{th}$ side (this of side-length $r_j$) and the $\wj^{th}$ one.\par
	
	\begin{figure}[hbtp]
		\centering
		\begin{tikzpicture}[scale=0.8]

\definecolor{ForestGreen}{RGB}{34,139,34}
\definecolor{darkgray176}{RGB}{176,176,176}
\definecolor{amber}{rgb}{.8, 0.6, 0.4}

\begin{axis}[axis lines=none,
tick align=outside,
tick pos=left,
x grid style={darkgray176},
xmin=-2.5, xmax=4.5,
xtick style={color=black},
y grid style={darkgray176},
ymin=-0.4, ymax=5.5,
ytick style={color=black}
]
\addplot [->,semithick, black]
table {%
0 -0.5
0 5.1
};
\addplot [->,semithick, black]
table {%
-0.3 0
4.1 0
};
\addplot [thick, perso]
table {%
0 0
3 0
4 3
1 5
-2 4
-1 1
0 0
};
\node[] (A) at (axis cs:4.1,0.){};
\draw[above,color=black](A) node { $x$};

\node (K)at (axis cs:0,5.1){};
\draw[left,color=black](K) node { $y$};

\node[inner sep=1.5pt,circle,draw=black,fill=red] (A1) at (axis cs:0.,0.){};
\draw[below left,color=red](A1) node {$\ve_1$};
\node[inner sep=1.5pt,circle,draw=black,fill=red] (A2) at (axis cs:3.,0.){};
\draw[below,color=red](A2) node { $\ve_2$};
\node[inner sep=1.5pt,circle,draw=black,fill=red] (A3) at (axis cs:4.,3.){};
\node[inner sep=1.5pt,circle,draw=black,fill=red] (A4) at (axis cs:1.,5.){};
\draw[above right,color=red](A4) node {$\ve_j$};
\node[inner sep=1.5pt,circle,draw=black,fill=red] (A5) at (axis cs:-2.,4.){};
\node[inner sep=1.5pt,circle,draw=black,fill=red] (A6) at (axis cs:-1.,1.){};
\draw[below,color=red](A6) node {$\ve_\kappa$};

\node[] (R1) at (axis cs:1.5,0.){};
\draw[above,color=perso](R1) node {$r_1$};
\node[] (R2) at (axis cs:3.5,1.5){};
\draw[left,color=perso](R2) node {$r_2$};
\node[] (RJ) at (axis cs:-0.5,4.5){};
\draw[above,color=perso](RJ) node {$r_j$};
\node[] (RK) at (axis cs:-0.375,0.4){};
\draw[above,color=perso](RK) node {$r_\kappa$};

\draw [thick,color=ForestGreen] (axis cs:2.8,0) arc [radius=0.2cm,start angle=180,end angle=70];
\node[] (T1) at (axis cs:3.05,0.05){};
\draw[above left,color=ForestGreen](T1) node {$\theta_1$};

\draw [thick,color=ForestGreen] (axis cs:-1.94,3.81) arc [radius=0.2cm,start angle=-75,end angle=15];
\node[] (TJ) at (axis cs:-1.94,3.81){};
\draw[right,color=ForestGreen](TJ) node {$\theta_j$};

\draw [thick,color=ForestGreen] (axis cs:0.2,0) arc [radius=0.2cm,start angle=0,end angle=135];
\node[] (TK) at (axis cs:0.,0.){};
\draw[above right,color=ForestGreen](TK) node {$\theta_\kappa$};

\end{axis}

\end{tikzpicture}
		\caption{An example of $K\in\mathbf{P}_6$}
		\label{chap3:fig1}
	\end{figure}
	
	There already are asymptotic results for $\P_K(n)$ in the case where $K$ is a general convex set. One of the most important for our work is B\'ar\'any's, who gave in \cite{barany2} a logarithmic equivalent of $\mathbb{P}_K(n)$:
	\begin{Theoreme}\cite{barany2}\label{thm2}
		For any compact convex set $K$ with non empty interior,
		\[\lim_{n\to+\infty} n^2\left(\mathbb{P}_{K}(n)\right)^{\frac{1}{n}}=\frac{1}{4}e^2\frac{\AP^*(K)^3}{\Area{K}},\]
		where 
		\begin{align}\label{eq:APBara}
			\AP^*(K):=\max_{\substack{S \text{convex set} \\ S\subset K}} \AP(S),
		\end{align}
		and $\AP(S)$ denotes the affine perimeter of a convex set $S$.
	\end{Theoreme}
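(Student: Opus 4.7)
The plan is to prove matching lower and upper bounds for $n^2 (\P_G(n))^{1/n}$, each equal to $\tfrac{1}{4}e^2 \AP^*(G)^3$. The common thread is that the probability is dominated by point configurations whose convex hull hugs the boundary of a convex body $S^\star \subset G$ maximizing the affine perimeter, so that $\AP(S^\star)=\AP^*(G)$. The appearance of $\AP$ in the constant is not accidental: it is the unique (up to factors) functional on convex bodies that is continuous and equiaffine-invariant, which is forced here because $\P_G(n)$ itself is affine-invariant.

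For the lower bound, I would fix a smooth convex body $S\subset G$ with $\AP(S)$ arbitrarily close to $\AP^*(G)$ and parametrize $\partial S$ by its affine arc-length. Splitting $\partial S$ into $n$ successive arcs $A_1,\ldots,A_n$ of equal affine length $\AP(S)/n$, I would attach to each $A_i$ a thin cap $C_i$ of Lebesgue area $\sim c/(n^3\,\Area(G))$, where $c$ is tuned at the end by a one-variable optimization. Two geometric facts then do the work: first, the equiaffine-invariant choice of $C_i$ forces each cap to be roughly affinely congruent to a standard parabolic cap, so caps have comparable Lebesgue area; second, a point placed in each $C_i$ automatically lies in convex position with probability tending to $1$, because near $\partial S$ (at affine scale $1/n^2$) the boundary is parabolic and a single point in each cap cannot fail convexity. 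Writing
\[
\P_G(n) \;\geq\; n!\cdot \P\bigl(\bz_i \in C_i \text{ for all } i\bigr)\cdot (1-o(1)),
\]
applying Stirling to $n!$, and optimizing in $c$ yields exactly $\tfrac{1}{4}e^2 \AP(S)^3$; sending $\AP(S)\to \AP^*(G)$ concludes.

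For the upper bound, I would decompose the event "$\{\bz_1,\ldots,\bz_n\}$ in convex position'' according to a finite (polynomial in $n$) atlas of possible "types'' for the convex hull, obtained via Bárány--Larman's economic cap covering of candidate limit bodies $S\subset G$. Given a type corresponding to a body $S$ subdivided into $n$ caps $(C_i(S))$, the probability that the $n$ labelled points realize this type is $\leq n!\prod_i \Leb(C_i(S))/\Leb(G)^n$. Using the affine-isoperimetric-type identity that controls $\prod \Leb(C_i)$ by $\AP(S)$ when the caps are chosen along affine arc-length, this product is maximized (in logarithmic scale, modulo lower-order terms) precisely when $S=S^\star$, giving a matching upper bound. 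Summing over types only costs a polynomial factor, which is absorbed after taking the $n$-th root.

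The main obstacle is the upper bound, specifically the variational step: one must show that no competing "type'' of convex hull — including ones not realized by the optimizer $S^\star$ — can beat the contribution coming from affine-uniform caps along $\partial S^\star$. This is where the specific form of the affine perimeter enters, via the Blaschke affine isoperimetric inequality and the sharp behavior of Lebesgue-area products of caps placed along a convex boundary. The lower bound, by contrast, is essentially constructive and reduces to a parabolic approximation of $\partial S^\star$ together with a careful Stirling expansion.
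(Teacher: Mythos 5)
First, note that the paper does not prove this statement at all: it is quoted from B\'ar\'any \cite{barany2}, so there is no ``paper's own proof'' to match; your sketch has to be judged against the known argument (and against the machinery the present paper builds for polygons, which mirrors it).

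There is a genuine gap in your lower bound, and it is exactly where the constant $\frac14 e^2$ comes from. You place one point in each of $n$ caps of area $c/n^3$, claim convex position holds with probability $1-o(1)$, and then ``optimize in $c$''. These two steps are incompatible. With Stirling, your bound reads $n^2\P_G(n)^{1/n}\gtrsim c/e$, which is monotone in $c$, so there is nothing to optimize unless $c$ is constrained; and the constraint is precisely the convexity failure you have waved away. For a cap sitting over an arc of affine length $\AP(S)/n$, the cap between the chord and the boundary has area of order $(\AP(S)/n)^3/12$; only for $c$ of at most this order is convex position essentially automatic, and then your bound gives a constant of order $\AP^*(G)^3/(12e)$, well below $\tfrac14 e^2\AP^*(G)^3$. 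To reach the sharp constant one must take much deeper caps, in which case the conditional probability of convex position is exponentially small in $n$, and that exponential factor must be computed exactly: this is done by decomposing the configuration into corners/caps, using the exact convex-chain probability in a bi-pointed triangle \eqref{eq:bipointee} (probability $2^n/(n!(n+1)!)$), summing multinomially over how many points fall in each cap, and only then applying Stirling; the two factorials are what generate the $e^2$ and the optimization over the subdivision produces the $\tfrac14\AP^*(G)^3$. This is the structure of B\'ar\'any's proof and of Theorem 2.4 and the Poissonization step in the present paper; without it your construction cannot yield the stated constant. Your upper-bound outline (cap coverings, concentration near the maximizer of \eqref{eq:APBara}, a H\"older/AM--GM-type control of products of cap areas by the affine perimeter) is much closer in spirit to the actual argument, but as you acknowledge the variational step and the counting of ``types'' are precisely the content of the proof, not a routine polynomial bookkeeping. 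A minor further point: as stated the limit can only hold with a normalization such as $\Area{G}=1$ (both $\P_G(n)$ and $\AP^*(G)^3/\Area{G}$ are affine invariant, but $\AP^*(G)^3$ alone is not), and your cap-area bookkeeping with the factor $\Area(G)$ should be made consistent with that normalization.
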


	There are several equivalent ways to define the affine perimeter of a convex set $S$ (\cite{blaschkedifferential,leichtweiss1986,lutwak,schutt1993}). We borrow to Blaschke (\cite{blaschkedifferential}) the following one:
	\begin{Definition}\label{chap3:def1}
		Given $S$ a convex compact set (nonflat), let $x_1,\ldots,x_m,x_{m+1}=x_1$ a subdivision of
		the boundary $\partial S$ and let $d_i$ be the line supporting $S$ at $x_i$ for all $i\in\{1,\ldots,m\}$.
		Write $y_i$ for the intersection of $d_i$ and $d_{i+1}$ (if $d_{i}=d_{i+1}$ then $y_i$ can be any point between
		$x_i$ and $x_{i+1}$). Let $T_i$ denote the triangle with vertices $x_i,y_i,x_{i+1}$ and also its area. We define the affine perimeter of the convex set $S$ as
		\[\AP(S)=2\lim \sum_{i=1}^m\sqrt[3]{T_i}\]
		where the limit is taken over all sequences of subdivisions $x[m]$ with $\max_{1,\ldots,m}\vert x_i-x_{i+1}\vert \to 0$ and $m\to+\infty$.
	\end{Definition}

	B\'ar\'any also proved in \cite{barany1} that there exists a unique convex set $\Dom{K}\subset K$ such that $\AP^*(K)=\AP(\Dom{K})$, and that the boundary of $\Dom{K}$ is either composed of pieces of the boundary of $K$ or of parabola arcs, but does not contains any line segment. Another remarkable characterization of the set $\Dom{K}$ is the fact that for an $n$-tuple $\zn$ under $\mathsf{U}^{(n)}_K$, conditioned to be in convex position, the convex hull $\CH(\zn)$ of the points $\zn$ converges in probability to $\Dom{K}$ for the Hausdorff distance (this is B\'ar\'any's limit shape theorem, see \Cref{thm:lst}). We give an example in \Cref{figintro} in the equilateral triangle as well as the regular octagon.
	
	\begin{figure}[H]
		\hspace{0.5cm}
		\begin{subfigure}[hbtp]{0.45\textwidth}
			\centering
			\input{Figure_triangle.txt}
			
		\end{subfigure}\hspace{1cm}
		\begin{subfigure}[hbtp]{0.45\textwidth}
			\centering
				\begin{tikzpicture}[scale=3.5]

	\def\R{0.6}
	\def\K{8}
	\def\alphaK{180/\K}
	
	\draw[perso,thick](0,0)--(0.455090,0.000000)--(0.776887,0.321797)--(0.776887,0.776887)--(0.455090,1.098684)--(0.000000,1.098684)--(-0.321797,0.776887)--(-0.321797,0.321797)--(0,0);
	
	\node (a1) at (0,0) {};
	\node (a2) at (0.455090,0.000000) {};
	\node (a3) at (0.776887,0.321797) {};
	\node (a4) at (0.776887,0.776887) {};
	\node (a5) at (0.455090,1.098684) {};
	\node (a6) at (0.000000,1.098684) {};
	\node (a7) at (-0.321797,0.776887) {};
	\node (a8) at (-0.321797,0.321797) {};
	
	\foreach \x [remember=\x as \y (initially \K)] in {1,2,...,\K} { 
	\path[draw=perso] (a\x) -- (a\y) node[midway, circle, fill=white, inner sep = 0.0001pt] (b\x) {}; 
	}
	\foreach \x [remember=\x as \y (initially \K)] in {1,2,...,\K} { 
	\path[draw=white,dashed] (b\x) -- (b\y) node[midway] (c\x) {}; 
	}
	\foreach \i in {1,2,...,\K} {
	\begin{scope}[shift={(c\i)},rotate=67.5+360*\i/\K] 
	\draw[thick,red] plot[ domain=-{cos(180/\K)*sin(180/\K)}:{cos(180/\K)*sin(180/\K)}] ({\x*\R},{-\R*\x*\x/2/cos(\alphaK)/cos(\alphaK)+\R*sin(\alphaK)*sin(\alphaK)/2}); 
	\end{scope}
	}

	\node[inner sep=1.pt,circle,draw=black,fill=amber] at (0.185235,0.026023){};

	\node[inner sep=1.pt,circle,draw=black,fill=amber] at (0.215382,0.026242){};

	\node[inner sep=1.pt,circle,draw=black,fill=amber] at (0.312429,0.036071){};

	\node[inner sep=1.pt,circle,draw=black,fill=amber] at (0.348603,0.042879){};

	\node[inner sep=1.pt,circle,draw=black,fill=amber] at (0.360606,0.045377){};

	\node[inner sep=1.pt,circle,draw=black,fill=amber] at (0.389819,0.055116){};

	\node[inner sep=1.pt,circle,draw=black,fill=amber] at (0.423449,0.067254){};

	\node[inner sep=1.pt,circle,draw=black,fill=amber] at (0.426778,0.068861){};

	\node[inner sep=1.pt,circle,draw=black,fill=amber] at (0.456322,0.083486){};

	\node[inner sep=1.pt,circle,draw=black,fill=amber] at (0.480554,0.096739){};

	\node[inner sep=1.pt,circle,draw=black,fill=amber] at (0.482531,0.097993){};

	\node[inner sep=1.pt,circle,draw=black,fill=amber] at (0.528678,0.129749){};

	\node[inner sep=1.pt,circle,draw=black,fill=amber] at (0.546491,0.144448){};

	\node[inner sep=1.pt,circle,draw=black,fill=amber] at (0.560064,0.156033){};

	\node[inner sep=1.pt,circle,draw=black,fill=amber] at (0.584984,0.178691){};

	\node[inner sep=1.pt,circle,draw=black,fill=amber] at (0.603752,0.197028){};

	\node[inner sep=1.pt,circle,draw=black,fill=amber] at (0.607743,0.201806){};

	\node[inner sep=1.pt,circle,draw=black,fill=amber] at (0.615440,0.211614){};

	\node[inner sep=1.pt,circle,draw=black,fill=amber] at (0.624306,0.226221){};

	\node[inner sep=1.pt,circle,draw=black,fill=amber] at (0.633223,0.243742){};

	\node[inner sep=1.pt,circle,draw=black,fill=amber] at (0.637267,0.254927){};

	\node[inner sep=1.pt,circle,draw=black,fill=amber] at (0.653168,0.299712){};

	\node[inner sep=1.pt,circle,draw=black,fill=amber] at (0.661291,0.323367){};

	\node[inner sep=1.pt,circle,draw=black,fill=amber] at (0.706741,0.473266){};

	\node[inner sep=1.pt,circle,draw=black,fill=amber] at (0.713204,0.498770){};

	\node[inner sep=1.pt,circle,draw=black,fill=amber] at (0.722643,0.551044){};

	\node[inner sep=1.pt,circle,draw=black,fill=amber] at (0.723156,0.567010){};

	\node[inner sep=1.pt,circle,draw=black,fill=amber] at (0.724735,0.631203){};

	\node[inner sep=1.pt,circle,draw=black,fill=amber] at (0.722834,0.686894){};

	\node[inner sep=1.pt,circle,draw=black,fill=amber] at (0.716985,0.731149){};

	\node[inner sep=1.pt,circle,draw=black,fill=amber] at (0.716231,0.735356){};

	\node[inner sep=1.pt,circle,draw=black,fill=amber] at (0.713311,0.746364){};

	\node[inner sep=1.pt,circle,draw=black,fill=amber] at (0.700969,0.790832){};

	\node[inner sep=1.pt,circle,draw=black,fill=amber] at (0.690285,0.813382){};

	\node[inner sep=1.pt,circle,draw=black,fill=amber] at (0.682610,0.828534){};

	\node[inner sep=1.pt,circle,draw=black,fill=amber] at (0.676009,0.841297){};

	\node[inner sep=1.pt,circle,draw=black,fill=amber] at (0.661734,0.868003){};

	\node[inner sep=1.pt,circle,draw=black,fill=amber] at (0.651842,0.886362){};

	\node[inner sep=1.pt,circle,draw=black,fill=amber] at (0.647053,0.894465){};

	\node[inner sep=1.pt,circle,draw=black,fill=amber] at (0.638204,0.907258){};

	\node[inner sep=1.pt,circle,draw=black,fill=amber] at (0.627990,0.921733){};

	\node[inner sep=1.pt,circle,draw=black,fill=amber] at (0.618557,0.933120){};

	\node[inner sep=1.pt,circle,draw=black,fill=amber] at (0.610286,0.940988){};

	\node[inner sep=1.pt,circle,draw=black,fill=amber] at (0.587608,0.961299){};

	\node[inner sep=1.pt,circle,draw=black,fill=amber] at (0.573911,0.970351){};

	\node[inner sep=1.pt,circle,draw=black,fill=amber] at (0.569754,0.972555){};

	\node[inner sep=1.pt,circle,draw=black,fill=amber] at (0.508485,0.999161){};

	\node[inner sep=1.pt,circle,draw=black,fill=amber] at (0.489119,1.007557){};

	\node[inner sep=1.pt,circle,draw=black,fill=amber] at (0.478645,1.011635){};

	\node[inner sep=1.pt,circle,draw=black,fill=amber] at (0.468749,1.015455){};

	\node[inner sep=1.pt,circle,draw=black,fill=amber] at (0.429814,1.029485){};

	\node[inner sep=1.pt,circle,draw=black,fill=amber] at (0.393496,1.041680){};

	\node[inner sep=1.pt,circle,draw=black,fill=amber] at (0.355906,1.044602){};

	\node[inner sep=1.pt,circle,draw=black,fill=amber] at (0.330488,1.045541){};

	\node[inner sep=1.pt,circle,draw=black,fill=amber] at (0.319714,1.045881){};

	\node[inner sep=1.pt,circle,draw=black,fill=amber] at (0.261982,1.046579){};

	\node[inner sep=1.pt,circle,draw=black,fill=amber] at (0.215640,1.046618){};

	\node[inner sep=1.pt,circle,draw=black,fill=amber] at (0.128715,1.045220){};

	\node[inner sep=1.pt,circle,draw=black,fill=amber] at (0.093765,1.038423){};

	\node[inner sep=1.pt,circle,draw=black,fill=amber] at (0.090848,1.037812){};

	\node[inner sep=1.pt,circle,draw=black,fill=amber] at (-0.009672,1.011159){};

	\node[inner sep=1.pt,circle,draw=black,fill=amber] at (-0.070295,0.994479){};

	\node[inner sep=1.pt,circle,draw=black,fill=amber] at (-0.076598,0.991011){};

	\node[inner sep=1.pt,circle,draw=black,fill=amber] at (-0.088502,0.983972){};

	\node[inner sep=1.pt,circle,draw=black,fill=amber] at (-0.130887,0.946407){};

	\node[inner sep=1.pt,circle,draw=black,fill=amber] at (-0.145507,0.931454){};

	\node[inner sep=1.pt,circle,draw=black,fill=amber] at (-0.152673,0.923103){};

	\node[inner sep=1.pt,circle,draw=black,fill=amber] at (-0.177316,0.888144){};

	\node[inner sep=1.pt,circle,draw=black,fill=amber] at (-0.197980,0.855754){};

	\node[inner sep=1.pt,circle,draw=black,fill=amber] at (-0.222945,0.809634){};

	\node[inner sep=1.pt,circle,draw=black,fill=amber] at (-0.252082,0.746207){};

	\node[inner sep=1.pt,circle,draw=black,fill=amber] at (-0.270055,0.706679){};

	\node[inner sep=1.pt,circle,draw=black,fill=amber] at (-0.283414,0.664128){};

	\node[inner sep=1.pt,circle,draw=black,fill=amber] at (-0.289899,0.639809){};

	\node[inner sep=1.pt,circle,draw=black,fill=amber] at (-0.300629,0.598673){};

	\node[inner sep=1.pt,circle,draw=black,fill=amber] at (-0.305637,0.577477){};

	\node[inner sep=1.pt,circle,draw=black,fill=amber] at (-0.308226,0.565146){};

	\node[inner sep=1.pt,circle,draw=black,fill=amber] at (-0.315792,0.491111){};

	\node[inner sep=1.pt,circle,draw=black,fill=amber] at (-0.315073,0.458992){};

	\node[inner sep=1.pt,circle,draw=black,fill=amber] at (-0.314486,0.434732){};

	\node[inner sep=1.pt,circle,draw=black,fill=amber] at (-0.312324,0.409753){};

	\node[inner sep=1.pt,circle,draw=black,fill=amber] at (-0.309431,0.393841){};

	\node[inner sep=1.pt,circle,draw=black,fill=amber] at (-0.275097,0.326129){};

	\node[inner sep=1.pt,circle,draw=black,fill=amber] at (-0.254384,0.300403){};

	\node[inner sep=1.pt,circle,draw=black,fill=amber] at (-0.218229,0.265111){};

	\node[inner sep=1.pt,circle,draw=black,fill=amber] at (-0.169148,0.219282){};

	\node[inner sep=1.pt,circle,draw=black,fill=amber] at (-0.162499,0.213225){};

	\node[inner sep=1.pt,circle,draw=black,fill=amber] at (-0.124493,0.180507){};

	\node[inner sep=1.pt,circle,draw=black,fill=amber] at (-0.089456,0.151158){};

	\node[inner sep=1.pt,circle,draw=black,fill=amber] at (-0.049648,0.118759){};

	\node[inner sep=1.pt,circle,draw=black,fill=amber] at (-0.022498,0.102016){};

	\node[inner sep=1.pt,circle,draw=black,fill=amber] at (-0.008110,0.093678){};

	\node[inner sep=1.pt,circle,draw=black,fill=amber] at (0.049650,0.060516){};

	\node[inner sep=1.pt,circle,draw=black,fill=amber] at (0.064062,0.052619){};

	\node[inner sep=1.pt,circle,draw=black,fill=amber] at (0.074831,0.046722){};

	\node[inner sep=1.pt,circle,draw=black,fill=amber] at (0.087917,0.041431){};

	\node[inner sep=1.pt,circle,draw=black,fill=amber] at (0.110182,0.032493){};

	\node[inner sep=1.pt,circle,draw=black,fill=amber] at (0.124080,0.030807){};

	\node[inner sep=1.pt,circle,draw=black,fill=amber] at (0.151104,0.028249){};

	\node[inner sep=1.pt,circle,draw=black,fill=amber] at (0.159422,0.027479){};

\end{tikzpicture}
		\end{subfigure}
		
		\caption{In the case of an equilateral triangle $(n=200)$ and of an octagon $(n=100)$, two samples of $n$ i.i.d. uniform points conditioned to be in convex position. We can see that each of them is very close to a red curve representing the boundary of $\Dom{\triangle}$ and $\Dom{\octagon}$, respectively.}\label{figintro}
	\end{figure}
	
	One of the aims of this paper is to provide, in the case of convex polygons \ie when $K\in\mathbf{P}$, new characterizations of the quantities $\Dom{K}$ and $\AP^*(K)$, which will be needed to provide an actual asymptotic equivalent of the sequence $\left(\mathbb{P}_K(n)\right)_n$.
	
	Since the boundary of $\Dom{K}$ does not contain any line segment, we deduce that if $K$ is a polygon, $\Dom{K}$ has a boundary composed of finitely many parabola arcs. For a convex polygon $K\in\polyk,$ we let the map $\Tang{K}$ denote the set of sides of $K$ to which $\Dom{K}$ is tangent, which is thus a subset of $\entk$. We will prove in \Cref{lem:tangency} that $\abso{\Tang{K}}\geq3$.
	We give an example of two polygons in \Cref{chap3:fig3}.

	\begin{figure}[hbtp]
		\centering
		\hspace{1cm}
		
		\begin{minipage}{0.45\textwidth}
			\centering
			\input{Figure_3.txt}
		\end{minipage}
		\hspace{1ex}
		\begin{minipage}{0.45\textwidth}
			\centering
			\input{Figure_4.txt}
		\end{minipage}
		
		\caption{Two examples of convex polygons drawn in blue, and their limit shape drawn in red. On the left, $K_L$ is in $\mathbf{P}_5^{\mathcal{T}}$, for its limit shape is tangent to every side. This is not the case anymore on the right: $K_R\in \mathbf{P}_7$, but ${\sf Tangency}_7(K_R)=\{1,2,3,5,6\}$ so that $K_R\notin\mathbf{P}_7^{\mathcal{T}}$.}
		\label{chap3:fig3}
	\end{figure}
	\noindent We let $\polyktau$ be the subset of polygons $K\in\polyk$ such that $\Tang{K}=\entk$, \ie $\Dom{K}$ is tangent to all the sides of $K$. The set $\polyktau$ is not empty since it contains $\Ck$, the regular convex $\kappa$-gon of area 1 (and all its images through affine maps).
	Denote by $\polytau$ the set 
	\[\polytau:=\bigcup_{\kappa\geq 3} \polyktau.\]
	
	\subsection{The analysis is simpler when $K\in\polytau$:}
	
	Fix $\kappa\geq3$ and $K\in\polyktau$.
	Since $\Dom{K}$ is tangent to every side of $K$, the affine perimeter of $\Dom{K}$ may be expressed easily in terms of the geometric properties of $K$. Denote by $\pp_j$ the point of tangency of $\Dom{K}$ with the $j^{th}$ side of $K$ (a recap is offered in \Cref{chap3:fig}), and set \begin{align}\label{eq:defwj}
		w_j:=\norm{\ve_j-\pp_j}/r_j \text{ for all } j\in\entk.
	\end{align} The affine perimeter of $\Dom{K}$ is, by \Cref{chap3:def1}, given by 
	\begin{align}\label{eq:AP1}
		\AP(\Dom{K})=2\sum_{i=1}^\kappa \sqrt[3]{T_i}
	\end{align}
	where $T_i$, $i\in\entk$ is the area of the $i^{th}$ triangle delineated by the vertices $\pp_{i},\ve_{{i+1}},\pp_{{i+1}}$.
	\begin{figure}[hbtp]
		\centering
		\input{Figure_5.txt}
		\caption{If $K$ belongs to $\polyktau,$ the boundary of $\Dom{K}$ is tangent to every side of $K$ at $\pp_j$ for the $j^{th}$ side. The triangles hached with bricks patterns are the triangle with area $T_i$, $i\in\entk$.}\label{chap3:fig}
	\end{figure}
	Note that given $K$, determining the $\pp[\kappa]$ (and thus the $w[\kappa]$ and the $T[\kappa]$) for a random convex polygon is not a trivial question. To overcome this matter, we raise the following property on $T[\kappa]$ and $w[\kappa]$:
	
	\begin{Theoreme}\label{thm:barany1}
		Set $\kappa\geq 3$ and let $K\in \polyktau$ a polygon with side-lengths $r[\kappa]$, and internal angles $\theta[\kappa]$. The vector $(\sqrt[3]{2T_1},\ldots,\sqrt[3]{2T_\kappa})$ from equation \eqref{eq:AP1} is the unique solution $f[\kappa]$ to the following system ${\sf PS}$:
		\begin{align}\label{eq:fj}
			{\sf PS}(\theta[\kappa],r[\kappa]):\left\{
			\begin{array}{ll}\displaystyle
				f_j\left(f_j+f_{{j-1}}\right)\left(f_j+f_{\wj}\right) & = r_j\cdot r_{j+1}\cdot\sin(\theta_j),\quad \forall j\in\entk.
			\end{array}
			\right.
		\end{align}
		In this case, the $w[\kappa]$ defined in \eqref{eq:defwj} satisfies $\forall j\in\entk$, 
		\begin{align}\label{eq:wj}
			w_j=\frac{f_j}{f_j+f_{{j-1}}}.
		\end{align}
	\end{Theoreme}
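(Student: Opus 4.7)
The strategy combines B\'ar\'any's structural theorem \cite{barany1} describing $\partial\Dom{K}$ with the variational characterization of $\Dom{K}$ as the maximizer of the affine perimeter among convex subsets of $K$. From \cite{barany1}, $\partial\Dom{K}$ is a cycle of finitely many parabolic arcs and contains no line segment, and each maximal parabolic arc of $\partial\Dom{K}$ is tangent at its two endpoints to two sides of $K$. Since $K\in\polyktau$ means $\Dom{K}$ touches every side, and the absence of line segments precludes degenerate configurations, one deduces that there is exactly one tangency $\pp_j$ per side $j$ and that $\partial\Dom{K}$ consists of exactly $\kappa$ parabolic arcs, the $j^{th}$ tangent to sides $j$ and $j+1$ at $\pp_j$ and $\pp_{j+1}$. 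A parabolic arc being uniquely determined by its two tangent lines and two tangent points, $\Dom{K}$ is then completely encoded by the vector $w[\kappa]\in(0,1)^\kappa$.

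A direct trigonometric computation in the triangle $\pp_j\ve_{j+1}\pp_{j+1}$ (apex angle $\theta_j$ at $\ve_{j+1}$, legs of lengths $(1-w_j)r_j$ and $w_{j+1}r_{j+1}$) then yields $T_j=\frac{1}{2}(1-w_j)w_{j+1}r_jr_{j+1}\sin\theta_j$. Since $\Dom{K}$ realizes $\AP^*(K)$ and is parametrized by $w[\kappa]$, equation \eqref{eq:AP1} identifies $w[\kappa]$ with the argmax of
\[F(w):=2\sum_{j=1}^\kappa T_j(w)^{1/3},\qquad w\in[0,1]^\kappa.\]
The $t\mapsto t^{1/3}$ behaviour near $0$ rules out the boundary of $[0,1]^\kappa$, so $w[\kappa]$ is an interior critical point. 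Setting $\partial F/\partial w_j=0$ gives the Euler relation $(1-w_j)T_{j-1}^{1/3}=w_jT_j^{1/3}$; in terms of $f_i:=\sqrt[3]{2T_i}$, this rearranges into \eqref{eq:wj} (up to the convention for the direction of $w_j$). Substituting the resulting formulas for $1-w_j$ and $w_{j+1}$ into $f_j^3=2T_j=(1-w_j)w_{j+1}r_jr_{j+1}\sin\theta_j$ telescopes to $f_j(f_j+f_{j-1})(f_j+f_{j+1})=r_jr_{j+1}\sin\theta_j$, i.e.\ system \eqref{eq:fj}.

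Uniqueness of the positive solution of ${\sf PS}$ would follow from strict concavity of $F$ on $(0,1)^\kappa$. Each summand $((1-u)v)^{1/3}$ has $2\times 2$ Hessian with negative diagonal and positive determinant $\frac{1}{27}((1-u)v)^{-4/3}$, so is strictly concave in $(u,v)$; in particular a nonzero direction $v\in\RR^\kappa$ must have some consecutive pair $(v_j,v_{j+1})\neq(0,0)$, contributing a strictly negative term to the second-order expansion of $F$. Hence $F$ admits a unique critical point in $(0,1)^\kappa$, and the bijection between critical points of $F$ and positive solutions of ${\sf PS}$ (via the relations above) yields the uniqueness of the system's solution.

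The main obstacle is the first paragraph: extracting the correct combinatorial structure of $\partial\Dom{K}$ from B\'ar\'any's description requires genuinely invoking both the ``no line segment'' property and the maximality of the arcs, in order to rule out pathologies such as an arc tangent to the same side at two distinct points or several arcs squeezed between two consecutive tangencies. Once that combinatorial picture is established, the remainder is a routine Lagrange-multiplier computation together with a standard concavity check.
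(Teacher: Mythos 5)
Your proposal is correct in substance, and it shares the paper's overall frame (reduce, via B\'ar\'any's structure theorem, to maximizing $F(u)=2\sum_j T_j(u)^{1/3}$ over tangency parameters $u\in[0,1]^\kappa$ of piecewise-parabolic inscribed domains), but the key mechanism is genuinely different. The paper \emph{assumes} for a moment a positive solution $f[\kappa]$ of ${\sf PS}$, uses it to rewrite $r_jr_{j+1}\sin(\theta_j)$ inside $F$, and then applies the inequality $(abc)^{1/3}\le (a+b+c)/3$, whose telescoping right-hand side gives $F\le 2^{2/3}\sum_i f_i$ with equality exactly at $u_j=f_j/(f_j+f_{j-1})$; uniqueness of the solution is then inherited from the uniqueness of $\Dom{K}$, and the converse (that $(\sqrt[3]{2T_j})_j$ indeed solves ${\sf PS}$) is only indicated by a ``we can check''. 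You instead work directly at the interior maximizer: the Euler/critical-point relations $(1-w_j)T_{j-1}^{1/3}=w_jT_j^{1/3}$ produce \eqref{eq:wj} and, after substitution into $f_j^3=2T_j$, the system \eqref{eq:fj}; uniqueness comes from strict concavity of $F$ on the open cube together with the bijection between interior critical points and positive solutions of ${\sf PS}$ (your Hessian computation and the ``some consecutive pair $(v_j,v_{j+1})\neq(0,0)$'' argument are correct). What your route buys: existence of the ${\sf PS}$ solution is an output rather than a hypothesis, and uniqueness does not need to be routed through the uniqueness of the affine-perimeter maximizer; what the paper's AM--GM trick buys is brevity and the closed form $\AP^*(K)=2^{2/3}\sum_i f_i$ read off instantly from any solution, with no second-order analysis. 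Two small remarks: the sign/orientation discrepancy you flag ($w_j=f_{j-1}/(f_{j-1}+f_j)$ versus \eqref{eq:wj}) is a convention issue already present in the paper itself (its proof writes the legs of $T_i$ as $r_iu_i$ and $r_{i+1}(1-u_{i+1})$, while the definition of $w_j$ in \eqref{eq:defwj} gives $(1-w_i)r_i$ and $w_{i+1}r_{i+1}$), so it is not a gap on your side; and your first paragraph (one tangency point per side, exactly one arc between consecutive tangencies, via ``no line segment'' plus maximality of arcs) is exactly the reduction the paper also takes from \cite{barany1}, stated there somewhat tersely as $\Dom{K}\in\ST$, so the level of rigor is comparable.
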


	\begin{Remarque}
		If $K$ is a regular polygon with $\kappa$ sides, we have of course $r_1=\ldots=r_\kappa$ and $\theta_1=\ldots=\theta_\kappa$ and the aforementioned triangles have area $T_1=\ldots=T_\kappa=r_\kappa^2\sth/8$, and then the corresponding $f[\kappa]$ satisfies $f_1=\ldots=f_\kappa=\left(r_\kappa^2\sth/4\right)^{1/3}$.
	\end{Remarque}

	\begin{Remarque}
		Apart from special cases, the solution of ${\sf PS}$ is not expected to be given by a close formula, but by numerical methods, one can obtain a solution within any fixed precision $\varepsilon>0$. As a matter of fact, every limit shape in a figure of this paper has been computed thanks to the system ${\sf PS}$.
	\end{Remarque}
	
	In the sequel, we define the ``renormalized'' family of $f[\kappa]$ by setting for all $j\in\entk$, 
	\[g_j:=f_j/\sum_{i=1}^\kappa f_i.\]
	\Cref{thm:barany1} is actually the key to the main result of this paper:
	
	\begin{Theoreme}\label{thm:tangency}
		For any $K\in\polyktau$, we have
		\begin{align}\label{eq:res1}
			\pK \underset{n\to\infty}{\sim} C_K\cdot\frac{e^{2n}}{4^n}\frac{\AP^*(K)^{3n}}{\Area{K}^n n^{2n+\kappa/2}},
		\end{align}
		with \[C_K:=\frac{1}{(2\pi)^{\kappa/2}\sqrt{\mathbb{d}_K}}\cdot\left[\prod_{j=1}^\kappa\frac{1}{\sqrt{w_j}m_j\sin(\theta_j)r_j}\right],\]  where for all $j\in\entk$,
		\begin{align}\label{mj}
			m_j=\frac{\cotan(\theta_{{j-1}})+\cotan(\theta_j)}{r_j}(g_j+g_{{j+1}})+\frac{g_{{j+1}}+g_{j+2}}{\sin(\theta_j)r_{\wj}}+\frac{g_{{j-1}}+g_{{j}}}{\sin(\theta_{{j-1}})r_{{j-1}}},
		\end{align}
		
		and $\mathbb{d}_K$ is the determinant of the symmetric matrix $\Sigma_K^{-1}=(\sigma^{-1}_{i,j})_{1\leq i,j\leq\kappa-1}$ defined as :
		
		\begin{align}\label{eq:matrix}
			\sigma^{-1}_{j,j}&= \frac1{g_j}+\frac1{g_\kappa}+\frac{\mathbb{1}_{j\neq1}}{g_1+g_\kappa}+\frac{\mathbb{1}_{j\neq1}}{g_{j-1}+g_{j}}+\frac{\mathbb{1}_{j\neq\kappa-1}}{g_{\kappa-1}+g_\kappa}+\frac{\mathbb{1}_{j\neq\kappa-1}}{g_{j}+g_{j+1}},\quad \text{ for }j<\kappa,\\
			\sigma^{-1}_{i,j}&=\frac1{g_\kappa}+\frac{\mathbb{1}_{j=i+1}}{g_{j-1}+g_{j}}+\frac{\mathbb{1}_{i\neq 1}}{g_{\kappa}+g_1}+\frac{\mathbb{1}_{j\neq \kappa-1}}{g_{\kappa-1}+g_{\kappa}},\quad \text{ for }i<j<\kappa.
		\end{align}
	\end{Theoreme}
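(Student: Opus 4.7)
The strategy is to decompose $K$ into $\Dom{K}$ and the $\kappa$ caps $C_1,\ldots,C_\kappa$, where $C_j$ is the region of $K\setminus\Dom{K}$ bounded by the parabolic arc of $\partial\Dom{K}$ running from $\pp_j$ to $\pp_{\wj}$ together with the two line-segments $[\pp_j,\ve_{\wj}]$ and $[\ve_{\wj},\pp_{\wj}]$. By B\'ar\'any's limit shape theorem, conditionally on $\{\zn\text{ in convex position}\}$ the $n$ points lie in $\bigcup_j C_j$ with probability tending to $1$, so I would start by writing
\[
\pK \;=\; \big(1+o(1)\big)\!\!\sum_{n_1+\cdots+n_\kappa=n}\!\!\binom{n}{n_1,\ldots,n_\kappa}\prod_{j=1}^\kappa \Big(\tfrac{|C_j|}{|K|}\Big)^{n_j} Q_j^{(n_j)},
\]
where $Q_j^{(n_j)}$ is the probability that $n_j$ i.i.d.\ uniform points in $C_j$, together with the anchor points $\pp_j,\pp_{\wj}$, form a convex chain with the correct monotonicity, and that these chains glue across successive caps into a global convex polygon.

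The technical heart of the argument is a sharp one-cap asymptotic for $Q_j^{(n_j)}$. Using an affine map that sends $\pp_j,\ve_{\wj},\pp_{\wj}$ to a canonical triangle with the parabolic arc normalized to $y=x^2$, $Q_j^{(n_j)}$ reduces to the well-studied probability of a convex chain anchored on a parabola, whose leading-order asymptotic is of the form
\[
Q_j^{(n_j)} \;\sim\; \frac{(2\sqrt[3]{T_j})^{3n_j}}{(n_j!)^2}\,\frac{\alpha_j}{\sqrt{n_j}}
\]
for an explicit constant $\alpha_j$. The factors $w_j$, $m_j$ and $\sin(\theta_j)\,r_j$ appearing in $C_K$ are the Jacobians of this affine reduction: $w_j$ comes from the relative position of $\pp_j$ along the $j^{\rm th}$ side, $r_j\sin(\theta_j)$ is the height of the cap above that side, and $m_j$ is the local ``insertion density'' at the tangent point produced when one differentiates the system \eqref{eq:fj} of \Cref{thm:barany1} with respect to the slot allocations. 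Crucially, \Cref{thm:barany1} is what makes these constants expressible purely in terms of $\theta[\kappa], r[\kappa]$ and the solution vector $f[\kappa]$.

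With a product asymptotic in hand, the multinomial sum is treated by Laplace's method. Setting $n_j = n g_j + \sqrt{n}\,x_j$ with the constraint $\sum_j x_j = 0$ and applying Stirling to both $n!/\prod n_j!$ and to the factors $1/(n_j!)^2$ coming from the $Q_j$, the exponential leading order becomes
\[
\frac{e^{2n}}{4^n}\,\frac{(2\sum_j \sqrt[3]{T_j})^{3n}}{n^{2n}} \;=\; \frac{e^{2n}}{4^n}\,\frac{\AP^*(K)^{3n}}{n^{2n}},
\]
while the quadratic fluctuations assemble into a $(\kappa-1)$-dimensional Gaussian. The entries of the inverse-covariance matrix are read off as a sum of a multinomial piece ($1/g_j$ on the diagonal, constants $1/g_\kappa$ after eliminating $x_\kappa=-\sum_{i<\kappa}x_i$) and coupling terms $1/(g_j+g_{j\pm 1})$ coming from the fact that two adjacent caps share the tangent point $\pp_j$, which is exactly the prescription \eqref{eq:matrix} for $\Sigma_K^{-1}$. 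Performing the Gaussian integral yields $(2\pi)^{(\kappa-1)/2}/\sqrt{\mathbb{d}_K}$, and combining with the $\prod_j 1/\sqrt{n_j}\sim n^{-\kappa/2}\prod_j g_j^{-1/2}$ factors from each cap and an extra $\sqrt{2\pi n}$ from Stirling on $n!$ produces the global $n^{-\kappa/2}$ and the $(2\pi)^{-\kappa/2}$ in $C_K$.

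The principal obstacle will be the sharp one-cap asymptotic with the correct constants. Extending the classical parabolic convex-chain asymptotic to an arbitrary cap with anchored endpoints requires tracking the affine Jacobian, the prefactor from the number of compatible orderings of the points along the chain, and the boundary correction arising at each $\pp_j$, and in particular showing that this correction yields exactly the coupling term $1/(g_{j-1}+g_j)$ rather than a more complicated nonlocal interaction between caps. A secondary difficulty is to justify that the contribution of compositions far from the optimum $(g_1 n, \ldots, g_\kappa n)$ is negligible, which should follow from a strict concavity statement for the functional $(n_j) \mapsto \sum_j n_j \log T_j - 2\sum_j n_j \log n_j$ under $\sum n_j = n$ whose maximum is attained precisely at $n_j = g_j n$ by the variational characterization of $\Dom{K}$.
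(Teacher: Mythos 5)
Your overall architecture (split the $n$ points into $\kappa$ groups attached to the $\kappa$ corners/arcs, estimate each group by a bi-pointed convex-chain formula, then do Laplace over the composition $n_1+\cdots+n_\kappa=n$ with optimizer $n_j=ng_j$ and Gaussian fluctuations governed by $\Sigma_K^{-1}$) is parallel to the paper's, but the concrete set-up has genuine gaps. The central one is the factorization ansatz: you anchor the chain in each cap at the deterministic tangency points $\pp_j,\pp_{j+1}$ and write $\pK$ as a multinomial sum of products of per-cap probabilities $Q_j^{(n_j)}$ depending only on $n_j$. The true boundary does not pass through the $\pp_j$'s (that event has probability zero, and typically the polygon crosses the arc on both sides, so the opening claim that all points lie in $\bigcup_j C_j$ is also not what the limit shape theorem gives); the convex gluing at each junction is a nonlocal constraint that couples adjacent groups. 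This coupling is precisely what produces the off-diagonal entries $1/(g_{j-1}+g_j)$ of $\Sigma_K^{-1}$, and a product of factors depending only on $n_j$ cannot generate them — you acknowledge this by attributing them to an unspecified ``boundary correction,'' which is exactly the missing argument. The paper resolves it by introducing the $\PCP$: the corners of the minimal parallel containing polygon are exactly independent once one conditions on the $\PCP$ and integrates over the random contact points $\cp_j$ on its sides; that beta-integration is what creates the coupled factorials $(s_{j-1}+s_j-1)!$ in the exact joint law of \Cref{thm:distri2}, and hence the coupling terms in the Gaussian limit. Relatedly, your sketch has no analogue of the side-distance variables $\Bell_j$ of the $\PCP$; in the paper the constants $m_j$ arise as the exponential rates of the rescaled variables $n\Bell_j$ (from expanding $\prod_j(1-\cl_j/r_j)^{s_{j-1}+s_j-1}$), and they contribute the factor $\prod_j 1/(m_j\sin(\theta_j)r_j)$ in $C_K$; calling $w_j$, $m_j$ ``Jacobians'' and an ``insertion density'' is not a derivation of these constants.

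There is also a quantitative inconsistency in the one-cap asymptotic you posit: $(2\sqrt[3]{T_j})^{3n_j}=8^{n_j}T_j^{n_j}$, whereas the bi-pointed triangle formula \eqref{eq:bipointee} scaled to a corner of area $T_j$ gives a chain measure of order $2^{n_j}T_j^{n_j}/\big(n_j!\,(n_j+1)!\big)$. Carrying your $8^{n_j}$ through the Laplace step produces an extra factor $4^n$ and yields $e^{2n}\AP^*(K)^{3n}/n^{2n+\kappa/2}$ instead of the correct $\tfrac{e^{2n}}{4^n}\AP^*(K)^{3n}/n^{2n+\kappa/2}$ of \eqref{eq:res1}; with the correct $2^{n_j}$ the cube-root optimizer $n_j\propto\sqrt[3]{T_j}$ and the stated rate do come out, so this is fixable, but as written the per-cap estimate contradicts the theorem you are proving. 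To repair the proposal you would essentially need to replace fixed anchors by integrated contact points on a circumscribed parallel polygon and track its side-distances — at which point you have rebuilt the paper's $\PCP$ argument.
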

	In a recent paper  \cite{Morin}, the author obtained an analogous result in the case of regular convex
	$\kappa$-gons. During the elaboration of this work, it was thought that $\Ck$’s inner symmetry was the
	only path to this logarithmic equivalent. In \Cref{sec2}, a method is presented that allows us to go
	beyond this symmetry and to generalize this result for all convex polygons. However, many
	elements of this section are adapted from the regular $\kappa$-gon case, so proofs and details are
	given only when a substantial difference appears. For this reason, it is believed that this paper
	will not be understood in depth without taking a look at  \cite{Morin}.
	
	\subsection{What happens when $K$ is not in $\polytau$ ?}
	\begin{Lemma}\label{lem:rep}
		Let $\kappa\geq3$. For any $K\in\polyk$ with $\abso{\Tang{K}}=m\in\ent{3}{\kappa}$, there exists a unique ``bigger'' convex polygon $K_\mathcal{T}\supset K$ such that
		\begin{align}
			\bullet&\quad \Ktau\in \mathbf{P}^{\mathcal{T}}_m,\label{jpp8}\\
			\bullet&\quad\Dom{K}=\Dom{\Ktau},\label{jpp10}
		\end{align}
	\end{Lemma}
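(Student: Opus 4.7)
The plan is to construct $\Ktau$ explicitly and then verify the two required properties. Write $\Tang{K}=\{j_1,\ldots,j_m\}$ in cyclic order, and for each such $j$ let $L_j$ denote the affine line supporting the $j$-th side of $K$ and $H_j$ the closed half-plane bounded by $L_j$ that contains $K$. I set
\[\Ktau:=\bigcap_{j\in\Tang{K}} H_j,\]
which is convex and clearly contains $K$ since $K\subset H_j$ for every $j$. To see that $\Ktau$ is a bounded convex polygon with exactly $m$ sides, I would use B\'ar\'any's description of $\Dom{K}$ recalled in the paper: $\partial\Dom{K}$ is a union of $m$ parabola arcs, and the arc joining the tangency points on $L_{j_i}$ and $L_{j_{i+1}}$ is tangent to both of these lines. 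Since a parabola tangent to two distinct lines lies in a bounded triangular wedge between them, $L_{j_i}$ and $L_{j_{i+1}}$ must intersect at a vertex $V_i$ lying outside $\Dom{K}$; the $V_i$ are then exactly the vertices of $\Ktau$, and its sides are $L_{j_1},\ldots,L_{j_m}$.

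The central point is to prove \eqref{jpp10}. By construction $\Dom{K}\subset K\subset\Ktau$ and $\Dom{K}$ is tangent to all $m$ sides of $\Ktau$, so its boundary realises inside $\Ktau$ a configuration of $m$ parabola arcs each tangent to a distinct side of $\Ktau$. Combining this with B\'ar\'any's uniqueness of the affine-perimeter maximiser in a convex polygon and with the uniqueness in \Cref{thm:barany1} of the solution of the system ${\sf PS}(\theta[m],r[m])$ attached to $\Ktau$ (which encodes precisely such all-side-tangent configurations) forces $\Dom{\Ktau}=\Dom{K}$. In particular $\Dom{\Ktau}$ is tangent to every side of $\Ktau$, so $\Ktau\in\mathbf{P}^{\mathcal{T}}_m$ and \eqref{jpp8} holds. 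This is the main obstacle of the proof: the mere existence of a candidate convex subset of $\Ktau$ with the correct tangent structure does not immediately rule out some larger-$\AP$ competitor that escapes into the ``caps'' $\Ktau\setminus K$, and one has to appeal to B\'ar\'any's parabolic characterisation of the maximiser together with the uniqueness in \Cref{thm:barany1} to conclude that the candidate $\Dom{K}$ is actually the maximiser in $\Ktau$.

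Finally, for the uniqueness of $\Ktau$, suppose $K'\supset K$ is another convex $m$-gon in $\mathbf{P}^{\mathcal{T}}_m$ with $\Dom{K'}=\Dom{K}$. Each of its $m$ sides is tangent to $\Dom{K}$, and two sides sharing a vertex of $K'$ are tangent to two consecutive parabola arcs of $\partial\Dom{K}$ at their common endpoint. Hence the $m$ tangency points must be exactly the $m$ junction points of $\partial\Dom{K}$, and at each such junction the unique tangent line to $\Dom{K}$ is the corresponding $L_{j_i}$. Therefore the side-lines of $K'$ are exactly $L_{j_1},\ldots,L_{j_m}$, which forces $K'=\Ktau$.
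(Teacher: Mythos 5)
Your construction of $\Ktau$ — the intersection of the closed half-planes bounded by the support lines of the sides in $\Tang{K}$ — is exactly the one used in the paper, and your boundedness remark and uniqueness discussion are in the right spirit. The gap is in the step you yourself single out as the main obstacle, namely $\Dom{\Ktau}=\Dom{K}$: the appeal to B\'ar\'any's parabolic structure plus the uniqueness in \Cref{thm:barany1} does not close it. \Cref{thm:barany1} is stated and proved under the hypothesis $\Ktau\in\mathbf{P}^{\mathcal{T}}_m$, i.e.\ it presupposes precisely \eqref{jpp8}, and what its uniqueness gives is that among inscribed domains tangent to \emph{all} $m$ sides (the class over which $\psi$ is maximised) there is a single optimum. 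It says nothing about a competitor tangent to only a proper subset of the sides of $\Ktau$; by \Cref{lem:tangency} the true maximiser $\Dom{\Ktau}$ is a priori only guaranteed to touch at least three sides, and the example $K_R$ of \Cref{fig3} shows that tangency to all sides is genuinely not automatic for a polygon. Monotonicity of $\AP^*$ under inclusion gives $\AP^*(\Ktau)\geq\AP^*(K)$ for free; the entire content of \eqref{jpp10} is the reverse inequality, i.e.\ that no convex subset of $\Ktau$ exploiting the caps $\Ktau\setminus K$ has strictly larger affine perimeter than $\Dom{K}$, and your proposal contains no argument excluding such a competitor — asserting that the two uniqueness statements "force" the conclusion is circular at exactly this point.

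For comparison, the paper (inside the proof of \Cref{thm:global}) closes this gap with a deformation argument: it interpolates between $\Ktau=K_0$ and $K=K_{t^*}$ by a family $(K_t)$ obtained by sliding the missing side at distance $t$ from the vertex $\mathsf{X}$, shows that $t\mapsto\Dom{K_t}$ is continuous for the Hausdorff distance (compactness of the space of convex subsets plus uniqueness of the $\AP$-maximiser), and then argues local constancy near $t^*$: since $\Dom{K_{t^*}}$ does not touch the moving side, continuity forces $\Dom{K_{t^*-\varepsilon}}\subset K_{t^*}$ for small $\varepsilon$, whence $\AP^*(K_{t^*-\varepsilon})\leq\AP^*(K_{t^*})\leq\AP^*(K_{t^*-\varepsilon})$ and, by uniqueness of the maximiser, $\Dom{K_{t^*-\varepsilon}}=\Dom{K_{t^*}}$; this propagates down to $t=0$, giving \eqref{jpp8} and \eqref{jpp10} simultaneously (and the general case of several missing sides is handled by iterating, one bunch of consecutive missing sides at a time). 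Some argument of this kind — or another genuine proof that the maximiser cannot migrate into the caps — is what your write-up is missing; your final uniqueness paragraph is then essentially fine, provided you justify (via \Cref{prop:bara} and the structure used in \Cref{thm:barany1}) that for a polygon in $\mathbf{P}^{\mathcal{T}}_m$ the tangency points of its limit shape with the sides are the junction points of the parabola arcs.
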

	
	In the sequel, with B\'ar\'any's limit shape theorem, we will identify equivalent ``classes'' of convex polygons such that for two convex polygons $K_1,K_2$ in the same class, \[{\sf Area}(K_{1})^n~\P_{K_1}(n)\underset{n\to\infty}{\sim} {\sf Area}(K_{2})^n~\P_{K_2}(n).\] As we will see, the convex sets contained in $\polytau$ can be chosen as representatives of these classes. These considerations are summarized in the following theorem:
	
	\begin{Theoreme}\label{thm:global}
		Let $K\in\mathbf{P}$. The polygon $K_\mathcal{T}\supset K$ described in \Cref{lem:rep} satisfies
		\begin{align}
			\P_{K}(n)\underset{n\to\infty}{\sim} \frac{{\sf Area}(\Ktau)^n}{{\sf Area}(K)^n}~\P_{\Ktau}(n).\label{jpp9}
		\end{align}
	\end{Theoreme}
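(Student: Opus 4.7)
The plan is a short conditioning argument relying on \Cref{lem:rep} and B\'ar\'any's limit shape theorem. Let $\bz_1,\ldots,\bz_n$ be i.i.d.\ uniform points in $\Ktau$. Since $K\subset\Ktau$, conditioning on the event $\{\bz_1,\ldots,\bz_n\in K\}$ yields i.i.d.\ uniform points in $K$, and being in convex position is an intrinsic property of the point set (independent of the ambient domain), so a direct decomposition gives
\[
\left(\frac{\Area{K}}{\Area{\Ktau}}\right)^{\!n}\P_K(n) \;=\; \P_{\Ktau}(n)\cdot q_n,
\]
with $q_n := \P\bigl(\bz_i\in K \text{ for all } i \,\big|\, \bz[n]\text{ in convex position in }\Ktau\bigr)$. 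Rearranging, the asymptotic \eqref{jpp9} is equivalent to showing $q_n\to 1$ as $n\to\infty$.

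To obtain $q_n\to 1$, I would invoke B\'ar\'any's limit shape theorem applied to $\Ktau$: conditionally on convex position, $\CH(\bz[n])$ converges in probability, for the Hausdorff distance, to $\Dom{\Ktau}$. By \Cref{lem:rep}, $\Dom{\Ktau}=\Dom{K}\subset K$, so it remains only to rule out that any single $\bz_i$ lands in the extra region $\Ktau\setminus K$.

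The key geometric fact is that
\[
\delta \;:=\; \operatorname{dist}\!\bigl(\Dom{K},\,\overline{\Ktau\setminus K}\bigr)\;>\;0.
\]
Indeed, each connected component of $\Ktau\setminus K$ is separated from $K$ by a side of $K$ whose index does \emph{not} lie in $\Tang{K}$; by the very definition of the tangency set, $\Dom{K}$ is not tangent to such a side, hence lies strictly on the $K$-side of it, at positive distance. Consequently, on the event $\{d_H(\CH(\bz[n]),\Dom{K})<\delta\}$, every $\bz_i$, being an element of $\CH(\bz[n])$, lies within distance $\delta$ of $\Dom{K}$ and therefore in $K$. The limit shape theorem then yields
\[
q_n \;\geq\; \P\bigl(d_H(\CH(\bz[n]),\Dom{\Ktau})<\delta \,\big|\, \bz[n]\text{ in convex position in }\Ktau\bigr)\;\underset{n\to\infty}{\longrightarrow}\;1,
\]
which together with the identity above gives \eqref{jpp9}.

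The only substantive step is the geometric assertion $\delta>0$, which reduces to the observation that the faces of $\Ktau\setminus K$ facing $K$ are precisely the sides of $K$ outside $\Tang{K}$. The rest is the elementary conditioning identity and a direct appeal to B\'ar\'any's limit shape theorem of \cite{barany2}; I do not foresee any real obstacle beyond phrasing the Hausdorff convergence in a form from which containment of each individual $\bz_i$ in $K$ can be read off.
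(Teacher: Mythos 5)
Your proposal is correct and is essentially the paper's own argument for \eqref{jpp9}: both proofs reduce the claim to B\'ar\'any's limit shape theorem (\Cref{thm:lst}) in $\Ktau$ combined with the geometric fact that $\Dom{K}=\Dom{\Ktau}$ lies at positive distance from $\overline{\Ktau\setminus K}$ — the paper packages this as the equality $S_n(K,\varepsilon)=S_n(\Ktau,\varepsilon)$ for small $\varepsilon$ and compares the two Lebesgue measures, while you package it as the exact conditioning identity plus $q_n\to1$ (a mild economy, since you only invoke the limit shape theorem in $\Ktau$). Be aware that the paper's proof of \Cref{thm:global} also spends most of its length constructing $\Ktau$ and proving $\Dom{K}=\Dom{\Ktau}$, i.e.\ proving \Cref{lem:rep}, which your argument takes as given — legitimate for the statement as phrased, but it means your proof covers only the final step of what the paper actually proves there.
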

	
	Note that ${\sf Area}(\Ktau)\geq {\sf Area}(K)$, hence the probability of being in convex position in $K$ is at least that in $\Ktau$.
	Let $m=\abso{\Tang{K}}.$ Since $K_\mathcal{T}$ is in $\mathbf{P}^{\mathcal{T}}_m$, we know an asymptotic of $\P_{K_\mathcal{T}}(n)$ as $n\to+\infty$ by \Cref{thm:tangency}. The proof of the existence of $K_\mathcal{T}$ is constructive, so that $K_\mathcal{T}$ is explicit and we may actually determine an equivalent of $\P_{K}(n)$ out of $K_\mathcal{T}$'s geometrical properties. Therefore, both Theorems \ref{thm:tangency} and \ref{thm:global} solve the question of the equivalent of $\pK$ for any convex polygon $K$. Indeed, on the one hand we have 
	\begin{align}\label{eq:res2}
		\pK &\underset{n\to\infty}{\sim} C_K\cdot\frac{e^{2n}}{4^n}\frac{\AP^*(K)^{3n}}{\Area{K}^n n^{2n+m/2}}
	\end{align}
	and on the other hand, by \eqref{jpp9}
	\begin{align}
		\P_{K}(n)\underset{n\to\infty}{\sim} C_{\Ktau}\cdot\frac{e^{2n}}{4^n}\frac{{\sf Area}(\Ktau)^n}{{\sf Area}(K)^n} \frac{\AP^*(\Ktau)^{3n}}{{\sf Area}(\Ktau)^n~n^{2n+m/2}}=C_{\Ktau}\cdot\frac{e^{2n}}{4^n}\frac{\AP^*(\Ktau)^{3n}}{{\sf Area}(K)^n~n^{2n+m/2}}.
	\end{align}
	Hence $C_K:=C_{\Ktau}$ is thus explicit and $m=\abso{\Tang{K}}$.
	The proof of \Cref{thm:global} will be provided in the third section.
	
	\subsection{A quick history.}A pretty exhaustive list of results around this question had been provided in \cite{Morin} but among those that are most directly related to our results, we ought to cite the following:
	\begin{enumerate}
		\item[$\bullet$] Valtr's formula \cite{Valtr1995} for the parallelogram $\displaystyle\mathbb{P}_\Box(n)=\frac{1}{(n!)^2}{2n-2\choose n-1}^2\underset{n\to +\infty}{\sim} \frac{1}{\pi^{2}2^5}\frac{4^{2n}e^{2n}}{n^{2n+2}},$
		\item[$\bullet$] Valtr's formula \cite{valtr1996probability} for the triangle $\displaystyle\mathbb{P}_\triangle(n)=\frac{2^n (3n-3)!}{(2n)!((n-1)!)^3}\underset{n\to +\infty}{\sim} \frac{\sqrt{3}}{4}\frac{1}{\pi^{3/2}3^3}\frac{3^{3n}e^{2n}}{2^nn^{2n+3/2}},$
		\item[$\bullet$] Marckert's formula \cite{marckert2017probability} in the disk case, and the asymptotic expansion of $\log\left(\mathbb{P}_\bigcirc(n)\right)$ by Hilhorst, Calka and Schehr \cite{hilhorst:hal-00330444},
		\item[$\bullet$] B\'ar\'any's works \cite{barany1,barany2} around the affine perimeter and the limit shape.
	\end{enumerate}
	
	Another important explicit result concerning points in convex position was found for what we call the ``bi-pointed triangle'': for $n\geq1$ points drawn uniformly in a triangle of area 1, with vertices $A,B,C$, what is the probability $Q_{\triangle}(n)$ that these points are in convex position together with $A,B$? B\'ar\'any, Rote, Steiger, Zhang \cite{Barany2000} proved that 
	\begin{align}\label{eq:bipointee}
		Q_{\triangle}(n)=\frac{2^n}{n!(n+1)!},
	\end{align} 
	a result refined by Buchta \cite{Buchta}. This formula is actually the core of
	the proof of Theorem 1.3 since, as we will see, any tuple $\zzn$ in convex position in a convex polygon may be decomposed in several bi-pointed triangles. 
	
	Notice that the study of limit shapes and convex chains took of after Vershik asked whether it was possible to determine the number and typical shape of convex lattice polygons contained in  $[-n,n]^2.$ Three different solutions were brought to light by Bárány \cite{barany3}, Vershik \cite{vershik} and Sinai \cite{sinai} in 1994. These results were refined by Bureaux, Enriquez \cite{Bureaux_2016} in 2016, and generalized in larger dimensions by Bárány, Bureaux, Lund \cite{BARANY2018143} in 2018.

	\paragraph{Notation.}In the sequel, we will reuse most of the objects we introduced in \cite{Morin} and adapt them to the general case. We introduce the following notation:
	
\begin{enumerate} 
	\item[{\it (a)}] $\Leb_{2n}$ is the Lebesgue measure on $(\RR^2)^n$.
	\item[{\it (b)}] When $K$ is a compact convex set of $\RR^2$ with non-empty interior, the set $\mathcal{D}_K(n)$ gathers all $n$-tuples of points in convex position in $K$, so that we have \[\mathbb{P}_K(n)=\Leb_{2n}(\mathcal{D}_K(n))/\Area{K}^n.\] \noindent\begin{minipage}{0.6\textwidth} \item[{\it (c)}] We choose to consider $n$-tuples $\zzn$ of points in convex {\bf canonical} order (see \Cref{chap3:fig2}); those that satisfy the following conditions\\ $\bullet$ If $(x_i,y_i)$ are the coordinates of $z_i$ in $\RR^2$, $y_1\leq y_i$ for all $i$ (that is, $z_1$ has the smallest $y$-component), and among those having the minimal $y$ component, it has the smallest $x$ component.\\ $\bullet$ The sequence $(\arg(z_{i+1}-z_i),1\leq i \leq n-1)$ is non-decreasing in $[0,2\pi]$.\\ Denote $\mathcal{C}_K(n)$ the set of such tuples of points, so that this choice induces \[\mathbb{P}_K(n)=n!~\Leb_{2n}(\mathcal{C}_K(n))/\Area{K}^n. \] \end{minipage} \hspace{1ex} \begin{minipage}{0.4\textwidth} \begin{tikzpicture}[scale=0.8]

\definecolor{darkgray176}{RGB}{176,176,176}
\definecolor{amber}{rgb}{1.0, 0.75, 0.0}

\begin{axis}[axis lines=none,
tick align=outside,
tick pos=left,
x grid style={darkgray176},
xmin=-2.5, xmax=4.5,
xtick style={color=black},
y grid style={darkgray176},
ymin=-0.4, ymax=5.5,
ytick style={color=black}
]
\addplot [->,semithick, black]
table {%
0 -0.5
0 5.1
};
\addplot [->,semithick, black]
table {%
-0.3 0
4.1 0
};
\addplot [thick, perso]
table {%
0 0
3 0
4 3
1 5
-2 4
-1 1
0 0
};
\node[] (A) at (axis cs:4.1,0.){};
\draw[above,color=black](A) node { $x$};

\node (K)at (axis cs:0,5.1){};
\draw[left,color=black](K) node { $y$};

\node[inner sep=1.5pt,circle,draw=black,fill=red] (A1) at (axis cs:0.,0.){};
\node[inner sep=1.5pt,circle,draw=black,fill=red] (A2) at (axis cs:3.,0.){};
\node[inner sep=1.5pt,circle,draw=black,fill=red] (A3) at (axis cs:4.,3.){};
\node[inner sep=1.5pt,circle,draw=black,fill=red] (A4) at (axis cs:1.,5.){};
\node[inner sep=1.5pt,circle,draw=black,fill=red] (A5) at (axis cs:-2.,4.){};
\node[inner sep=1.5pt,circle,draw=black,fill=red] (A6) at (axis cs:-1.,1.){};

\node[inner sep=1.5pt,circle,draw=black,fill=amber] (Z1) at (axis cs:1.98,0.52){};
\draw[below,color=black](Z1) node { $z_1$};
\node[inner sep=1.5pt,circle,draw=black,fill=amber] (Z2) at (axis cs:2.63,1.59){};
\draw[below,color=black](Z2) node { $z_2$};
\node[inner sep=1.5pt,circle,draw=black,fill=amber] (Z3) at (axis cs:2.69,2.73){};
\node[inner sep=1.5pt,circle,draw=black,fill=amber] (Z4) at (axis cs:2.38,3.33){};
\node[inner sep=1.5pt,circle,draw=black,fill=amber] (Z5) at (axis cs:1.13,4.51){};
\node[inner sep=1.5pt,circle,draw=black,fill=amber] (Z6) at (axis cs:-0.54,3.21){};
\node[inner sep=1.5pt,circle,draw=black,fill=amber] (Z7) at (axis cs:-0.42,3.84){};
\draw[below,color=black](Z6) node { $z_{n-1}$};
\node[inner sep=1.5pt,circle,draw=black,fill=amber] (Z8) at (axis cs:0.27,1.18){};
\draw[below,color=black](Z8) node { $z_n$};

\end{axis}

\end{tikzpicture} \captionof{figure}{The convex canonical order}\label{chap3:fig2} \end{minipage}\\ \item[{\it (d)}] $\mathsf{Q}^{(n)}_{K}$ is the distribution of an $n$-tuple $\zn$ taken under $\mathsf{U}^{(n)}_{K},$ conditioned to be in $\mathcal{C}_K(n)$.
	\item[{\it (e)}] Let $\NN_\kappa(n)=\big\{\ssk\in\NNN, \text{ such that }s_1+\ldots+s_\kappa=n \text{ and } s_{{j-1}}+s_j\neq 0 \text{ for all }j\in\entk\big\},$ the set of vectors summing to $n$ having no successives values being both zero. \end{enumerate}

	\subsection{The ``Parallel Containing Polygon'' and its coordinates}
	
	\begin{Definition}
		Fix $\kappa\geq3$, and let $K\in\polyk$. For an $n$-tuple $\zzn\in\CVnK$, we define the $\PCP(\zzn)$ as the intersection of all convex polygons with sides parallel to $K$, that contain $\zzn$. It is the minimal ``parallel containing polygon'' for the inclusion of $\zzn$. Its side-lengths are denoted by $\Cj:=\Cj(\zzn)$, with one or several $c_j$ being possibly zeroes. For all $j\in\entk$ the side-distance $\ell_j:=\ell_j(\zzn)$ denotes the smallest distance from the $j^{th}$ side of $K$ to a point in $\zzn.$\footnote{Since we will be working with random tuples $\zn$ under $\UnK$, the quantity $\Bell_j=\ell_j(\zn)$ is a.s. unique.}. See \Cref{chap3:fig12} below for a recap.
	\end{Definition}

	\begin{figure}[hbtp]
		\centering
		\begin{figure}[H]
\begin{minipage}{0.5\textwidth}
\centering
\begin{tikzpicture}[scale=0.8]

\definecolor{darkgray176}{RGB}{176,176,176}
\definecolor{amber}{rgb}{1.0, 0.75, 0.0}

\begin{axis}[axis lines=none,
tick align=outside,
tick pos=left,
x grid style={darkgray176},
xmin=-2, xmax=4.5,
xtick style={color=black},
y grid style={darkgray176},
ymin=-0.163496409913397, ymax=5.,
ytick style={color=black}
]
\addplot [thick, black]
table {%
0 0
3 0
4 3
1 5
-2 4
-1 1
0 0
};
\addplot [thick, blue]
table {%
0.92 0.52
2.27 0.52
3.15 3.16
1.13 4.51
-0.77 3.88
0.07 1.38
0.92 0.52
};
\addplot [<->,thick, red]
table {%
3.29 0.86
2.47 1.13
};
\addplot [<->,thick, red]
table {%
1.43 0
1.43 0.52
};
\addplot [<->,thick, red]
table {%
-0.46 0.46
0.27 1.18
};
\node[] (A) at (axis cs:1.43,0.25){};
\draw[right,color=red](A) node { $\ell_1$};

\node (K)at (axis cs:-0.05,0.9){};
\draw[left,color=red](K) node { $\ell_\kappa$};	

\node[] (T) at (axis cs:2.95,0.95){};
\draw[above,color=red](T) node { $\ell_2$};

\node[] (C2) at (axis cs:2.86,2.29){};
\draw[right,color=blue](C2) node { $c_2$};

\node (C1)at (axis cs:1.17,0.52){};
\draw[above right,color=blue](C1) node { $c_1$};	

\node[] (CK) at (axis cs:0.7,0.79){};
\draw[above,color=blue](CK) node { $c_\kappa$};

\node[inner sep=1.5pt,circle,draw=black,fill=amber] (A1) at (axis cs:1.98,0.52){};
\node[inner sep=1.5pt,circle,draw=black,fill=amber] (A2) at (axis cs:2.63,1.59){};
\node[inner sep=1.5pt,circle,draw=black,fill=amber] (A3) at (axis cs:2.69,2.73){};
\node[inner sep=1.5pt,circle,draw=black,fill=amber] (A4) at (axis cs:2.38,3.33){};
\node[inner sep=1.5pt,circle,draw=black,fill=amber] (A5) at (axis cs:1.13,4.51){};
\node[inner sep=1.5pt,circle,draw=black,fill=amber] (A6) at (axis cs:-0.54,3.21){};
\node[inner sep=1.5pt,circle,draw=black,fill=amber] (A7) at (axis cs:-0.42,3.84){};
\node[inner sep=1.5pt,circle,draw=black,fill=amber] (A8) at (axis cs:0.27,1.18){};

\end{axis}

\end{tikzpicture}
\end{minipage}
\quad
\begin{minipage}{0.5\textwidth}
\centering
\begin{tikzpicture}[scale=0.8]

\definecolor{darkgray176}{RGB}{176,176,176}
\definecolor{amber}{rgb}{1.0, 0.75, 0.0}

\begin{axis}[axis lines=none,
tick align=outside,
tick pos=left,
x grid style={darkgray176},
xmin=-2, xmax=4.5,
xtick style={color=black},
y grid style={darkgray176},
ymin=-0.163496409913397, ymax=5.,
ytick style={color=black}
]
\addplot [thick, black]
table {%
0 0
3 0
4 3
1 5
-2 4
-1 1
0 0
};
\addplot [thick, blue]
table {%
1.98 0.52
2.27 0.52
3.15 3.16
1.13 4.51
0.69 4.37
1.98 0.52
};

\node[inner sep=1.5pt,circle,draw=black,fill=amber] (A1) at (axis cs:1.98,0.52){};
\node[inner sep=1.5pt,circle,draw=black,fill=amber] (A2) at (axis cs:2.63,1.59){};
\node[inner sep=1.5pt,circle,draw=black,fill=amber] (A3) at (axis cs:2.69,2.73){};
\node[inner sep=1.5pt,circle,draw=black,fill=amber] (A4) at (axis cs:2.38,3.33){};
\node[inner sep=1.5pt,circle,draw=black,fill=amber] (A5) at (axis cs:1.13,4.51){};

\end{axis}

\end{tikzpicture}
\end{minipage}

\end{figure}
		\caption{
			Two example of $\PCP$ for some $K\in\mathbf{P}_6$ (drawn in black). The $\PCP(\zzn)$ is drawn in blue, its side-lengths $\Cj$ are represented in blue as well, and the side-distances $\ell[\kappa]$ are drawn in red (on the left only). On the left picture, all side-lengths $c[\kappa]$ are nonzero whereas it is not the case anymore on the right picture.}
		\label{chap3:fig12}
	\end{figure}

	\paragraph{``Contact points'' (see \Cref{fig:super}).}
	For each $\zzn$ in $\CVnK$, each side of $\PCP(\zzn)$ contains at least one element of $\{z_1,\cdots,z_n\}$. The $j^{th}$ ``contact point'' $\cp_j(\zzn)$ is the\footnote{Once again, the point on each side is a.s. unique} point of $\{z_1,\cdots,z_n\}$, which is on the $j^{th}$ side of $\PCP(\zzn)$, and which is the smallest for the lexicographical order among those with this property. 
	Note that $\cp_j=\cp_{\wj}$ is possible, and has a positive probability for all $n\geq1$ (this is the case $\cp_3=\cp_4$ in \Cref{fig:super}).
	
	Denote by $\bb_j$ the intersection point between the $j^{th}$ and $\wj^{th}$ sides of $\PCP(\zzn)$ for all $j\in\entk$ (the $j^{th}$ vertex of the $\PCP(\zzn)$). In the case where the $j^{th}$ side of $\PCP(\zzn)$ is reduced to a point, \ie $c_j=0,$ we have $\cp_{{j-1}}=\bb_{{j-1}}=\cp_j=\bb_j=\cp_{\wj}$. 
	
	The triangle $\cp_j,\cp_{\wj},\bb_j$ will be refered to as the $j^{th}$ corner of $\PCP(\zzn)$ or $\corner_j(\zzn)$.
	For all $j\in \entk$, let $k:=k(j)\in\entn$ such that $z_k=\cp_j$ and denote by $s_j:=s_j(z[n])$ the integer such that $z_{k+s_j}=\cp_{\wj}$ (eventually $s_j=0$); the quantity $s_j$ denotes the number of vectors joining the points of the convex chain $(z_k=\cp_j,\ldots,z_{k+s_j}=\cp_{\wj})$. We will refer to the tuple $\ssk$ as the {\bf size-vector}.
	
	\begin{Remarque}\label{rem:equ}
		A quick glance at \Cref{fig:super} and the previous considerations allows one to see that $s_j=0$ is equivalent to $\cp_j=\cp_{\wj}$. Therefore, $c_j=0$ is equivalent to $s_{{j-1}}+s_{j}=0.$
	\end{Remarque}
	{\begin{figure}[H]
	\centering
	\begin{tikzpicture}[scale=0.8]
		
		\definecolor{darkgray176}{RGB}{176,176,176}
		\definecolor{amber}{RGB}{34,139,34}
		
		\begin{axis}[axis lines=none,
			tick align=outside,
			tick pos=left,
			x grid style={darkgray176},
			xmin=-2, xmax=4.5,
			xtick style={color=black},
			y grid style={darkgray176},
			ymin=-0.163496409913397, ymax=5.,
			ytick style={color=black}
			]
			\addplot [thick, black]
			table {%
				0 0
				3 0
				4 3
				1 5
				-2 4
				-1 1
				0 0
			};
			\addplot [thick, blue]
			table {%
				0.92 0.52
				2.27 0.52
				3.15 3.16
				1.13 4.51
				-0.77 3.88
				0.07 1.38
				0.92 0.52
			};

			\node[inner sep=1.pt,circle,draw=blue,fill=blue] (B1) at (axis cs:0.92,0.52){};
			\draw[left,color=blue](B1) node {$\bb_\kappa$};	
			\node[inner sep=1.pt,circle,draw=blue,fill=blue] (B2) at (axis cs:2.27,0.52){};
			\draw[right,color=blue](B2) node {$\bb_1$};	
			\node[inner sep=1.pt,circle,draw=blue,fill=blue] (B3) at (axis cs:3.15,3.16){};
			\node[inner sep=1.pt,circle,draw=blue,fill=blue] (B4) at (axis cs:1.13,4.51){};
			\node[inner sep=1.pt,circle,draw=blue,fill=blue] (B5) at (axis cs:-0.77,3.88){};
			\node[inner sep=1.pt,circle,draw=blue,fill=blue] (B6) at (axis cs:0.07,1.38){};

			\node[inner sep=1.5pt,circle,draw=black,fill=amber] (A1) at (axis cs:1.98,0.52){};
			\draw[below,color=amber](A1) node {$\cp_1$};	
			\node[inner sep=1.5pt,circle,draw=black,fill=amber] (A2) at (axis cs:2.63,1.59){};
			\draw[right,color=amber](A2) node {$\cp_2$};	
			\node[inner sep=1.5pt,circle,draw=black,fill=amber] (A5) at (axis cs:1.13,4.51){};
			\node[inner sep=1.5pt,circle,draw=black,fill=amber] (A7) at (axis cs:-0.54,3.21){};
			\node[inner sep=1.5pt,circle,draw=black,fill=amber] (A8) at (axis cs:0.27,1.18){};
			
			\fill[color=blue,pattern=north east lines] (A2.center)--(B3.center)--(A5.center)--cycle;
			\node[inner sep=1.5pt,circle,draw=black,fill=amber] (A3) at (axis cs:2.69,2.73){};
			\node[inner sep=1.5pt,circle,draw=black,fill=amber] (A4) at (axis cs:2.38,3.33){};
			
			\fill[color=blue,pattern=north east lines] (A5.center)--(B5.center)--(A7.center)--cycle;
			\node[inner sep=1.5pt,circle,draw=black,fill=amber] (A6) at (axis cs:-0.42,3.84){};
			
			\draw[amber,line width=1pt] (A1)--(A2)--(A3)--(A4)--(A5)--(A6)--(A7)--(A8)--(A1);
			\draw[blue,line width=0.5pt] (A2)--(A5)--(A7);

		\end{axis}
		
	\end{tikzpicture}
	\caption{In $K$, an example of $\zzn$-gon, the $\PCP(\zzn)$ and its vertices $\bb[6]$, and the second and third corners (the hashed areas). Here, the size-vector is $s[6]=(1,3,0,2,1,1)$.}\label{fig:super}
\end{figure}}
	
	The $\PCP$ is sort of an equivalent to the $\ECP$ that we had in the regular $\kappa$-gon case.
	The geometric equations of the $\PCP$, that may be obtained essentially with trigonometric considerations and several applications of Thales' theorem, are given by the following proposition:
	\begin{Proposition}\label{prop2}
		Let $z[n]\in\mathcal{C}_K(n)$, and $\Cj,\Lj$ be the associated quantities with $\PCP(\zzn)$.
		\begin{enumerate}
			\item[{\it (i)}] The $\Lj$ and $\Cj$ are related by the $\kappa$ equations
			\begin{equation}\label{eq:EEj2} c_j=r_j-{\cl}_j(\Lj),\quad \forall j \in \{1,\ldots,\kappa\},\end{equation} where
			\[{\cl}_j(\Lj)=\frac{\ell_{{j-1}}}{\sin(\theta_{{j-1}})}+\frac{\ell_{{j+1}}}{\sin(\theta_j)}+\ell_j\left(\cotan(\theta_{{j-1}})+\cotan(\theta_j)\right).\]
			\item[{\it (ii)}] The set $\mathcal{L}_K=\ell[\kappa]\left(\mathcal{C}_K(n)\right)$ (of all possible vectors $\ell[\kappa]$) is the set of solutions $\Lj$ to the inequations
				\begin{equation}\label{eq:EEj3}
				r_j-{\cl}_j(\Lj)\geq0\quad \forall j \in \{1,\ldots,\kappa\}.\end{equation}
			together with the conditions $\ell_j\geq0,j\in\entk.$
		\end{enumerate}
	\end{Proposition}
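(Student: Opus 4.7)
For part (i), I would work locally at each vertex of $K$. Fix $j\in\entk$ and set coordinates so that $\ve_j=(0,0)$, the $j$-th side of $K$ runs along the positive $x$-axis to $\ve_{j+1}=(r_j,0)$, and the interior of $K$ lies in the upper half-plane. The $(j-1)$-th side of $K$ then emanates from $\ve_j$ in direction $(\cos\theta_{j-1},\sin\theta_{j-1})$, while the $(j+1)$-th side emanates from $\ve_{j+1}$ in direction $(-\cos\theta_j,\sin\theta_j)$ (this follows from the fact that at vertex $\ve_{j+1}$ the interior angle with the $j$-th side is $\theta_j$). By the very definition of $\PCP(\zzn)$, its $j$-th side is supported by the line $y=\ell_j$, its $(j-1)$-th side by the inward parallel translate of the $(j-1)$-th side of $K$ at perpendicular distance $\ell_{j-1}$, and its $(j+1)$-th side by the analogous translate at distance $\ell_{j+1}$. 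A direct resolution of these two $2\times 2$ linear systems yields
\[
\bb_{j-1}=\left(\frac{\ell_{j-1}}{\sin(\theta_{j-1})}+\ell_j\cotan(\theta_{j-1}),\;\ell_j\right),\qquad \bb_j=\left(r_j-\frac{\ell_{j+1}}{\sin(\theta_j)}-\ell_j\cotan(\theta_j),\;\ell_j\right).
\]
Since the $j$-th side of $\PCP(\zzn)$ is horizontal in these coordinates, $c_j=x(\bb_j)-x(\bb_{j-1})$, which gives exactly \eqref{eq:EEj2}.

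For part (ii), the forward inclusion is immediate: $\ell_j\geq 0$ as a distance, and $c_j\geq 0$ as the length of a (possibly degenerate) side of $\PCP(\zzn)$, so part (i) yields $\widetilde{\cl}_j(\Lj)\leq r_j$. For the converse, starting from any $\Lj$ with $\ell_j\geq 0$ and $r_j-\widetilde{\cl}_j(\Lj)\geq 0$, I would construct an explicit $\zzn$ realizing $\Lj$. Define $P$ as the intersection of the $\kappa$ closed half-planes obtained by translating each side of $K$ inward by $\ell_j$ (keeping the half-plane that contains the interior of $K$). By the computation in (i), $P$ is a convex polygon with sides parallel to those of $K$, whose $j$-th side has length $c_j=r_j-\widetilde{\cl}_j(\Lj)\geq 0$, and $P\subset K$. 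On each non-degenerate side (those with $c_j>0$) I would place one point in its relative interior, and complete this set into an $n$-tuple $\zzn\subset\partial P$ in convex canonical order (adding the remaining points on any chosen non-degenerate side; this is possible as soon as $n$ is at least the number of nonzero $c_j$'s, which is bounded by $\kappa$ — for smaller $n$ the statement is vacuous in the sense that $\CVnK$ is empty or trivial). The resulting $\zzn$ lies in $\CVnK$ and, thanks to the presence of at least one point on each non-degenerate side of $P$, the minimality property defining $\PCP$ forces $\PCP(\zzn)=P$, so that $\ell[\kappa](\zzn)$ is the prescribed vector.

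The main subtlety is the backward direction: one must check that no smaller parallel containing polygon can be inserted, i.e.\ that $\PCP(\zzn)$ is exactly $P$ and not a proper subpolygon. This is where the explicit placement of a point per non-degenerate side is crucial; the degenerate sides ($c_j=0$) are automatically consistent with \Cref{rem:equ}, which identifies $c_j=0$ with $s_{j-1}+s_j=0$, so no point need be assigned to them. Once this is done, part (i) guarantees that the side distances recovered from $P=\PCP(\zzn)$ are indeed $\Lj$, completing the characterization.
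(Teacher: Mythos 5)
Your part (i) is correct, and it is exactly the elementary computation the paper has in mind (the paper gives no written proof, only the remark that \eqref{eq:EEj2} follows from trigonometric considerations): in your coordinates the two $2\times2$ systems do give $\bb_{j-1}$ and $\bb_j$ as stated, and $c_j=x(\bb_j)-x(\bb_{j-1})$ yields \eqref{eq:EEj2}. One point deserves a sentence: writing the endpoints of the $j$-th side of $\PCP(\zzn)$ as the intersections of the $j$-th bounding line with the $(j\pm1)$-th ones uses that \emph{every} one of the $\kappa$ bounding lines meets $\{z_1,\ldots,z_n\}$ (all constraints are active, by the very definition of $\ell_j$), so that the faces of the intersection of half-planes occur in the cyclic order of their normals and consecutive faces share a vertex, degenerate faces included; with this observation your formula also covers the case $c_j=0$, where $\bb_{j-1}=\bb_j$.

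The gaps are in the inclusion ``$\supseteq$'' of part (ii). First, completing the tuple by ``adding the remaining points on any chosen non-degenerate side'' places at least three points of $\zzn$ on one straight line as soon as $n\geq m+2$ (with $m$ the number of sides of $P$ of positive length), and then $\{z_1,\ldots,z_n\}$ is \emph{not} the vertex set of a convex polygon, so your witness is not in $\CVnK$; since $n$ is large in all uses of the proposition, this is the generic case, not an exceptional one. The surplus points must instead be placed so that all $n$ points remain extreme, e.g.\ on a strictly convex arc inside a corner of $P$ between two consecutive contact points, respecting the cyclic order. Second, when some $c_j=0$ you must put a point at the corresponding vertex of $P$, which is the unique point of $P$ on the $j$-th bounding line; otherwise the realized $\ell_j(\zzn)$ is strictly larger than the prescribed $\ell_j$ and $\PCP(\zzn)\neq P$. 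Your appeal to \Cref{rem:equ} does not cover this: that remark relates $c_j=0$ to $s_{j-1}+s_j=0$ for a \emph{given} tuple; it does not say that the $j$-th constraint needs no witness. (Adding that vertex is harmless for convex position, as it is an extreme point of $P$.) Finally, the dismissal of small $n$ is inaccurate: $\CVnK\neq\emptyset$ already for $n=3$, and for $n$ smaller than roughly half the number of active bounding lines the inclusion ``$\supseteq$'' genuinely fails (a point of $P$ lies on at most two bounding lines), so the stated set equality must be read for $n$ large enough — which is how the paper uses it — and, relatedly, a more economical construction places points at vertices of $P$ so that each serves two lines at once.
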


	\begin{proof}
		The proof relies on basic trigonometric considerations. One can find a similar proof (to be adapted) in \cite{Morin}{Proposition 2.2}.
	\end{proof}

	\section{A result on tangent limit shapes and polygons}\label{sec2}
	
	Consider the Hausdorff distance $d_H$, and for any convex set $K$ and any $\varepsilon>0,$ set 
	\begin{align}\label{not:sng}
		S_n(K,\varepsilon)=\left\{z[n]\in{\mathcal{C}_K}(n)\text{ s.t. } d_H(\CH(\zzn),\Dom{K})\leq\varepsilon\right\}.
	\end{align}
	We recall B\'ar\'any's limit shape theorem, that will be of paramount importance for the sequel of this paper:
	\begin{Theoreme}[Limit shape theorem \cite{barany1}]\label{thm:lst}
		For any convex set $K$ and any $\varepsilon>0,$
		\begin{align}
			\frac{\Leb_{2n}\left(S_n(K,\varepsilon)^c\right)}{\Leb_{2n}\left({\mathcal{C}_K}(n)\right)}\underset{n\to\infty}{\longrightarrow}0,
		\end{align}
		from which we deduce that $\displaystyle \P_{K}(n)\underset{n\to\infty}{\sim} \Leb_{2n}\left(S_n(K,\varepsilon)\right)/\Area{K}^n$.
	\end{Theoreme}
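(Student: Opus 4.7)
The statement is Bárány's classical result, so I would not attempt to reprove it from scratch, but I would sketch the following approach which is in the spirit of the original argument. The natural setting is the space $\mathcal{K}(G)$ of closed convex subsets of $G$ equipped with the Hausdorff distance $d_H$, which is compact by Blaschke's selection theorem. The functional $S\mapsto\AP(S)$ is upper semi-continuous on $\mathcal{K}(G)$ and, as recalled before \Cref{thm:lst}, uniquely maximized at $\Dom{G}$ with value $\AP^*(G)$. Fixing $\varepsilon>0$, the set $F_\varepsilon=\{S\in\mathcal{K}(G):d_H(S,\Dom{G})\geq\varepsilon\}$ is closed, hence compact, so by upper semi-continuity and uniqueness of the maximizer there exists $\delta=\delta(\varepsilon)>0$ with $\sup_{S\in F_\varepsilon}\AP(S)\leq \AP^*(G)-\delta$.

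The heart of the argument is then a \emph{uniform} upper bound on the contribution of tuples whose convex hull lies in a small neighbourhood of a prescribed $S$. Concretely, one shows that for every $\eta>0$ there is a finite covering $F_\varepsilon\subset\bigcup_i B_H(S_i,\eta)$ such that
\[
\Leb_{2n}\bigl(\{z[n]\in\CVnK:\CH(\zzn)\in B_H(S_i,\eta)\}\bigr)\leq \Area{G}^n\,\exp\!\bigl(n[3\log \AP(S_i)-2\log n+\log(e^2/4)+o(1)]\bigr),
\]
uniformly in $i$. This estimate is the technical core of Bárány's paper \cite{barany2} and is obtained by approximating the boundary of $S_i$ by polygonal chains (one uses the bi-pointed triangle formula \eqref{eq:bipointee} to handle each chain between consecutive breakpoints, then optimizes over the break positions, which produces the cubic-root weights characteristic of the affine perimeter).

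Combining this with the matching lower bound in \Cref{thm2} on $\Leb_{2n}(\CVnK)$ gives, for every $S_i\in F_\varepsilon$ and $\eta$ small enough,
\[
\frac{\Leb_{2n}(\{z[n]:\CH(\zzn)\in B_H(S_i,\eta)\})}{\Leb_{2n}(\CVnK)}\leq \exp\!\bigl(-3n\log(1+\delta/(2\AP^*(G)))+o(n)\bigr),
\]
which is exponentially small. Summing over the finite subcover of $F_\varepsilon$ yields $\Leb_{2n}(S_n(G,\varepsilon)^c)/\Leb_{2n}(\CVnK)\to 0$. For the second sentence of the theorem, one simply writes $\Leb_{2n}(\CVnK)=\Leb_{2n}(S_n(G,\varepsilon))+\Leb_{2n}(S_n(G,\varepsilon)^c)$, and the first assertion gives $\Leb_{2n}(\CVnK)\sim \Leb_{2n}(S_n(G,\varepsilon))$; dividing by $\Area{G}^n$ and using the definition $\P_G(n)=\Leb_{2n}(\CVnK)/\Area{G}^n$ concludes.

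The main obstacle is clearly the uniform upper bound: one needs to match the leading exponential rate $\AP(S)^{3n}$ for a general, possibly non-polygonal $S$, which forces a careful discretization of $\partial S$ together with the sharp bi-pointed triangle formula. Everything else—compactness, upper semi-continuity, uniqueness of $\Dom{G}$, and the final union bound—is standard given the geometry of the affine perimeter set up in \cite{barany1}.
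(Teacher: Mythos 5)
First, note that the paper does not prove this statement at all: \Cref{thm:lst} is quoted verbatim from B\'ar\'any \cite{barany1} as an external ingredient (only the final sentence, the passage from the ratio limit to $\P_G(n)\sim \Leb_{2n}(S_n(G,\varepsilon))/\Area{G}^n$, is the paper's own trivial deduction, which you reproduce correctly). So there is no internal proof to compare against; your proposal is an attempted reconstruction of B\'ar\'any's argument, and it has a genuine gap at its core.

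The gap is the displayed ``uniform upper bound'' for tuples whose hull lies in a Hausdorff ball $B_H(S_i,\eta)$, with rate $\AP(S_i)^{3n}$, which you attribute to \cite{barany2}. What the upper-bound machinery of \cite{barany2} actually gives is a bound for tuples in convex position whose hull is \emph{contained} in a convex body $A$, with rate $\AP^*(A)^{3n}$ (note $\AP^*$, not $\AP$ -- the two differ drastically: $\AP$ vanishes on polygons, while $\AP^*$ is monotone under inclusion). Applying that containment bound with $A=(S_i+\eta B)\cap G$ gives rate $\AP^*\bigl((S_i+\eta B)\cap G\bigr)^{3n}$, and this yields \emph{no} exponential gain precisely for the dangerous bodies $S_i\in F_\varepsilon$ that contain $\Dom{G}$ (e.g.\ $\Dom{G}\subset S_i\subset G$ with $d_H(S_i,\Dom{G})\geq\varepsilon$): for those, $\AP^*$ of the neighbourhood equals $\AP^*(G)$. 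To get decay in terms of $\AP(S_i)$ you must also exploit the \emph{inner} half of the Hausdorff constraint (the hull must contain $S_i$ shrunk by $\eta$), i.e.\ you need a sandwich estimate whose rate is $\max\{\AP(C): S_{i,-\eta}\subset C\subset (S_i+\eta B)\cap G\}$, which tends to $\AP(S_i)$ by upper semicontinuity. Establishing such a two-sided estimate (or, as B\'ar\'any does, splitting the event $d_H(\CH,\Dom{G})>\varepsilon$ into an inner violation -- the hull misses a disk inside $\Dom{G}$, handled by a separating line plus the $\AP^*$-containment bound -- and an outer violation -- some point falls outside the $\varepsilon$-neighbourhood of $\Dom{G}$, handled by a rooted estimate of the form $\max\{\AP(S): y\in S\subset G\}<\AP^*(G)-\delta$) is exactly the hard content of the limit shape theorem; your sketch assumes it as a black box in a form the cited reference does not provide. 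The surrounding frame (Blaschke selection, upper semicontinuity of $\AP$, uniqueness of $\Dom{G}$, finite subcover and union bound, and the final deduction of the asymptotic equivalence) is fine, but it is the easy part.
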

	
	In words, this theorem states that, when $n$ is large, the overwhelming majority of $n$-tuples that are in convex position in a convex set $K$ gather around the boundary of $\Dom{K}.$
	
	To evaluate the asymptotic equivalent of $\pK$ for $K\in\polytau$, we need to understand the behaviour of $(\lk,\sk):=(\Lj(\zn),\ssk(\zn))$ (that is, the coordinates of $\PCP(\zn)$) for $\zn$ that is $\mathsf{Q}^{(n)}_{K}$-distributed.

	\subsection{The joint distribution of $(\lk,\sk)$}
	
	As anounced in the introduction, in this section we will consider polygons $K\in\polytau$, that is, such that the set $\Dom{K}$ is {\bf tangent to all sides} of $K$.
	\paragraph{Notation.}For all polygons $K\in\polytau$, we define $\ptK$, the probability that an $n$-tuple $\zn$ taken under $\UnK$ is in convex position and has a ``full-sided'' $\PCP$ (\ie if $K\in\polyktau$, $c_j(\zn)>0$ for all $j\in\entk$).
	\begin{Lemma}\label{lem:dvar}
		For all $K\in\polytau,$
		\[\pK\underset{n\to+\infty}{\sim}\ptK.\]
	\end{Lemma}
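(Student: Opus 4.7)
The plan is to combine B\'ar\'any's limit shape theorem (\Cref{thm:lst}) with a geometric stability argument that exploits the hypothesis $K\in\polytau$. Since $\ptK\leq\pK$ holds by definition, only the reverse asymptotic inequality is substantive. Fix $\varepsilon>0$; \Cref{thm:lst} yields
\[\pK\underset{n\to\infty}{\sim}\Leb_{2n}(S_n(K,\varepsilon))/\Area{K}^n,\]
so it suffices to exhibit some $\varepsilon_0=\varepsilon_0(K)>0$ such that every $\zzn\in S_n(K,\varepsilon_0)$ satisfies $c_j(\zzn)>0$ for all $j\in\entk$. Granting this, the inclusion $S_n(K,\varepsilon_0)\subset\{\zzn\in\CVnK:\PCP(\zzn)\text{ is full-sided}\}$ yields $\ptK\geq\Leb_{2n}(S_n(K,\varepsilon_0))/\Area{K}^n\sim\pK$, and we are done.

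To produce such an $\varepsilon_0$, I would proceed as follows. By B\'ar\'any's structural description of $\Dom{K}$ for polygons (\cite{barany1}), $\partial\Dom{K}$ is a finite union of parabolic arcs and contains no line segment, so $\Dom{K}$ is strictly convex. Since $K\in\polytau$, $\Dom{K}$ is tangent to all $\kappa$ sides of $K$ at points $\pp_1,\ldots,\pp_\kappa$ which are pairwise distinct (strict convexity implies different outer-normal directions give different tangency points). Set $\rho:=\tfrac{1}{3}\min_{j\neq j'}d(\pp_j,\pp_{j'})>0$, so that the balls $B(\pp_j,\rho)$ are pairwise disjoint. For any convex body $X\subset K$, let $F_j(X)$ denote the set of points of $X$ lying on the supporting line of $X$ parallel to side $j$ of $K$ (oriented outwards, i.e.\ supporting the $j$-th side of the containing parallel polygon of $X$). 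The positive curvature of the parabolic arc of $\partial\Dom{K}$ at $\pp_j$ yields a local quantitative modulus of continuity: there exists $\varepsilon_0>0$, depending only on $K$, such that
\[d_H(X,\Dom{K})<\varepsilon_0\ \Longrightarrow\ F_j(X)\subset B(\pp_j,\rho)\quad\text{for all }j\in\entk.\]
Applied to $X=\CH(\zzn)$ for $\zzn\in S_n(K,\varepsilon_0)$, this places $\cp_j\in F_j(\CH(\zzn))\subset B(\pp_j,\rho)$; disjointness of these balls gives $\cp_j\neq\cp_{\wj}$ for every $j$, which by \Cref{rem:equ} is exactly the condition $c_j>0$ for all $j$.

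The main obstacle is the quantitative stability assertion of the second paragraph: turning Hausdorff proximity of $X$ and $\Dom{K}$ into uniform localization of the support sets $F_j(X)$ near the tangency points. It reduces to a local estimate near each $\pp_j$ and rests on the positive curvature of $\partial\Dom{K}$ there; in a tangent-aligned local parametrization one can, for instance, show that $F_j(X)$ is contained in the $O(\sqrt{\varepsilon_0})$-neighborhood of $\pp_j$ by comparing $X$ and $\Dom{K}$ and solving a parabolic inequality. This is precisely where the hypothesis $K\in\polytau$ is used: every side of $K$ carries a tangency point $\pp_j$ on which the parabolic-arc structure of $\partial\Dom K$ from \cite{barany1} can be applied.
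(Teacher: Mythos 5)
Your argument is correct and is in essence the paper's: both proofs reduce, via B\'ar\'any's limit shape theorem (\Cref{thm:lst}), to showing that once $\CH(\zzn)$ is Hausdorff-close to $\Dom{K}$ the $\PCP$ must be full-sided, and both use the hypothesis $K\in\polytau$ exactly there. The difference is in how that last deterministic step is carried out. The paper argues more softly: tangency of $\Dom{K}$ to every side forces each side-distance $\ell_j(\zzn)$ to be small, hence by \eqref{eq:EEj2} each $c_j$ is close to $r_j>0$, i.e.\ the $\PCP$ converges to $K$; no localization of the contact points is needed. You instead prove the inclusion $S_n(K,\varepsilon_0)\subset\{\text{full-sided}\}$ by pinning each $\cp_j$ in a small ball around $\pp_j$ and using disjointness of these balls; this is a valid (and more quantitative) route, but it carries two extra obligations that the paper's route avoids. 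First, the support-set stability you flag as the main obstacle does hold, and in fact needs no curvature: a Blaschke-selection/compactness argument together with uniform convergence of support functions and the fact that the support set of $\Dom{K}$ in each side direction is a single point already gives a (non-quantitative, but sufficient) $\varepsilon_0$. Second, the pairwise distinctness of the $\pp_j$ deserves a better justification than ``strict convexity'': absence of segments in $\partial\Dom{K}$ does not by itself rule out a corner of $\partial\Dom{K}$ at a vertex of $K$ serving as tangency point for two adjacent sides; distinctness follows instead from the $C^1$ matching of the parabola arcs (\Cref{prop:bara}), or equivalently from $T_j>0$ (i.e.\ $f_j>0$ in \Cref{thm:barany1}). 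With these two points patched, your proof is complete; the paper's version simply buys the conclusion with less machinery.
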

	
	\begin{proof} The proof is similar to that of the $K=\Ck$ case.
		For an $n$-tuple $\zn$ under $\UnK$, B\'ar\'any's \Cref{thm:lst} implies immediately that
		\[\mathbb{P}\big(\PCP(\zn)\text{ is ``full-sided''}~\vert~ \zn\in\mathcal{C}_K(n)\big)=\frac{\ptK}{\pK}\underset{n\to+\infty}{\longrightarrow}1.\]
		
		We have $\Dom{K}\subset K$. Hence we can define the $\PCP$ of $\Dom{K}$ as the minimum parallel polygon to $K$ that contains $\Dom{K}$. Since $K\in\polytau$, $\Dom{K}$ is tangent at all sides of $K$, which means that $\PCP(\Dom{K})=K$. Since the $\PCP$ is a continuous map in the plane, we have
		\[d_H(\zn,\Dom{K})\cvg0 \implies d_H(\PCP(\zn),K)=d_H(\PCP(\zn),\PCP(\Dom{K}))\cvg0.\]
		Hence, when $n$ is large, the $\PCP$ has a.s. as many sides as $K$.
	\end{proof}
	
	This lemma shows that the data of $\zn$ with ``full-sided'' $\PCP$s are sufficient for our search of $\pK$. Therefore, a good characterization of this quantity is required:
	\begin{Lemma}
		Let $K\in\polyktau$, and let $\zn$ under $\mathsf{Q}^{(n)}_{K}$. The two following assumptions are equivalent:
		\begin{enumerate}
			\item[{\it (i)}] $\PCP(\zn)$ is ``full-sided'',
			\item[{\it (ii)}] $\sk=\ssk(\zn)\in\Nkn$ (this is a condition on the number of vectors in each corner).
		\end{enumerate}
	\end{Lemma}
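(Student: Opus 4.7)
The plan is to unfold both conditions and read the equivalence directly off Remark~\ref{rem:equ}; no genuinely new geometric input is required. The statement is really a deterministic assertion about tuples $\zn \in \CVnK$, so the distribution $\QnK$ plays no role in the proof.

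First I would unpack (i) as the definition of ``full-sided'': $c_j(\zn) > 0$ for every $j\in\entk$. Next I would unpack membership of $\sk$ in $\Nkn$ into three sub-conditions: (a) each $s_j$ is a non-negative integer, (b) $s_1+\cdots+s_\kappa = n$, and (c) $s_{j-1}+s_j \neq 0$ for every $j \in \entk$. Part (a) is immediate from the definition of $s_j$ as a count of vectors between two contact points. Remark~\ref{rem:equ} identifies (c) exactly with the condition $c_j > 0$ for every $j$, so the equivalence between (i) and (c) is free.

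The only remaining point is (b), a short counting identity. I would argue that the contact points $\cp_1,\ldots,\cp_\kappa$ form a cyclic subsequence of $(z_1,\ldots,z_n)$ (with repetitions allowed precisely when some $s_j=0$), and that by the very definition of $s_j$, this integer is the number of consecutive vectors of the cyclic convex chain $z_1 \to z_2 \to \cdots \to z_n \to z_1$ lying along the arc from $\cp_j$ to $\cp_{\wj}$. The $\kappa$ arcs so obtained partition the $n$ vectors of the cyclic convex chain (each vector lies strictly inside one corner of $\PCP(\zn)$), hence $s_1+\cdots+s_\kappa=n$ holds for every $\zn\in\CVnK$.

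There is no real obstacle: the lemma is a bookkeeping identification of two equivalent ways to encode the degeneracies of $\PCP(\zn)$. The only minor care needed is in the edge case where several consecutive $\cp_j$ coincide (equivalently, several consecutive $c_j$ vanish); but even in that case the $s_j=0$ contributions are consistent with the cyclic partition, so the counting argument for (b) is unaffected.
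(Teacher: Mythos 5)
Your proof is correct and takes essentially the same route as the paper's: both reduce the equivalence to \Cref{rem:equ}, identifying $c_j=0$ with $s_{j-1}+s_j=0$ side by side. Your explicit check that $\sum_{j=1}^{\kappa} s_j=n$ holds for every tuple in convex position (the cyclic-arc count around the chain) is left implicit in the paper but does not change the argument.
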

	\begin{proof}
		This is pretty much \Cref{rem:equ}: suppose that the $\PCP(\zzn)$ has exactly $\kappa$ nonzero sides, \ie if $\mathbf{c}[\kappa]=c[\kappa](\zn)$, we have $\mathbf{c}_j>0$ for all $j\in\entk.$ Inside the tuple $\zn$, consider for all $j\in\entk$ the contact points $\Bcp_{{j-1}}$, $\Bcp_{j}$ and $\Bcp_{\wj}.$ A small picture suffices to see that we cannot have $\Bcp_{{j-1}}=\Bcp_{j}=\Bcp_{\wj}$ for this is equivalent to $\mathbf{c}_j=0,$ and thus is also equivalent to the fact that there exists a nonzero vector leading either $\Bcp_{{j-1}}$ to $\Bcp_{j}$ (\ie $\mathbf{s}_{{j-1}}\geq1$), or $\Bcp_{j}$ to $\Bcp_{\wj}$ (\ie $\mathbf{s}_j\geq1$).
	\end{proof}
	\paragraph{Notation.}In accordance with this lemma, we define the distribution $\QtnK$ that denotes the law of an $n$-tuple $\zn$ taken under $\mathsf{Q}^{(n)}_{K}$, conditioned to have $\ssk(\zn)\in\Nkn$, that is, conditioned to have a full-sided $\PCP$. In particular, with $d_V$ the total variation distance, by \Cref{lem:dvar} we have 
	\[d_V\left(\mathsf{Q}^{(n)}_{K},\QtnK\right)\cvg 0.\]
	For such a $\zn$ taken under $\QtnK,$ we give in the following theorem the joint distribution of $(\lk=\Lj(\zn),\sk=\ssk(\zn))$, which is analogous to \cite{Morin}[Theorem 3.6.].
	
	\begin{Theoreme}\label{thm:distri2}
		Let $K\in\polyktau$ and let $\zn$ be an $n$-tuple of random points under $\QtnK$, and consider the random variables $\lk=\Lj(\zn)$, $\sk=\ssk(\zn).$ Then for a given $\ssk\in\Nkn$, the pair $(\lk,\sk)$ has the joint distribution 
		\begin{align}\label{equmonstre2}
			\mathbb{P}\left(\lk\in\mathrm{d}\Lj,\sk=\ssk\right)=f^{(n)}_{K}\left(\ell[\kappa],\ssk\right)\mathrm{d}\ell[\kappa],
		\end{align}where we denote $\mathrm{d}\ell[\kappa]=\prod_{j=1}^\kappa\mathrm{d}\ell_j$, and where
		\begin{align}\label{jpp}
			f^{(n)}_{K}\left(\ell[\kappa],\ssk\right)=\frac{n!}{\ptK} \mathbb{1}_{\Lj\in\mathcal{L}_K}	\prod_{j=1}^{\kappa}\frac{\sin(\theta_j)^{s_j-1}c_j^{s_{{j-1}}+s_j-1}}{s_j!(s_{{j-1}}+s_j-1)!}.
		\end{align}
		Recall that, by \eqref{eq:EEj2}, $\Cj$ is indeed a function of $\Lj.$
	\end{Theoreme}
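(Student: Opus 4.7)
The plan is to realise every tuple $\zn\in\widetilde{\CV{K}}$ with size-vector $\ssk$ through the bijective parameterisation $\zn\leftrightarrow(\lk,(t_j)_{j=1}^\kappa,(\text{interior points}))$, where $t_j\in[0,c_j]$ is the tangential position of the contact point $\cp_j$ along the $j$-th side of $\PCP(\zn)$ (measured from $\bb_{j-1}$), and the interior points of corner $j$ are the $s_j-1$ elements of $\zn$ lying strictly between $\cp_j$ and $\cp_{j+1}$ in the canonical order. Since $\ssk$ pins down the indices of all contact points in the canonical order, no extra combinatorial factor appears. Decomposing each $\cp_j$ into its normal and tangential coordinates relative to the $j$-th side of $K$, I would verify that moving $\cp_j$ along the inward normal shifts only $\ell_j$ (with unit Jacobian), while moving $\cp_j$ tangentially shifts only $t_j$ (all other $\cp_i$'s and all $\PCP$-vertices $\bb[\kappa]$ being held fixed); hence $\prod_j\mathrm{d}\cp_j=\mathrm{d}\lk\prod_j\mathrm{d}t_j$ and the Lebesgue measure factorises as $\mathrm{d}\zn=\mathrm{d}\lk\prod_j\mathrm{d}t_j\prod_i\mathrm{d}z_i^{\mathrm{int}}$.

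For fixed $(\lk,(t_j))$, corner $j$ is the triangle $\cp_j\bb_j\cp_{j+1}$ of area $T_j=\tfrac12(c_j-t_j)\,t_{j+1}\sin(\theta_j)$, and its $s_j-1$ interior points must, together with $\cp_j$ and $\cp_{j+1}$, form a convex chain from $\cp_j$ to $\cp_{j+1}$ --- precisely the bi-pointed triangle event governed by \eqref{eq:bipointee}. Restricting to the canonical order of those interior points removes a factor $1/(s_j-1)!$ relative to the unordered count, producing the per-corner Lebesgue volume $V_j=T_j^{s_j-1}\,2^{s_j-1}/[((s_j-1)!)^2\,s_j!]$. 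Substituting $T_j$ cancels the $2^{s_j-1}$ and extracts $\sin(\theta_j)^{s_j-1}$, after which I would integrate over the $t_j$; cyclic bookkeeping makes each $t_j$ appear exactly as $(c_j-t_j)^{s_j-1}$ (via $T_j$) and as $t_j^{s_{j-1}-1}$ (via $T_{j-1}$), so the $t_j$-integral is a beta function:
\[
\int_0^{c_j}t_j^{s_{j-1}-1}(c_j-t_j)^{s_j-1}\,\mathrm{d}t_j=c_j^{s_{j-1}+s_j-1}\,\frac{(s_{j-1}-1)!\,(s_j-1)!}{(s_{j-1}+s_j-1)!}.
\]
The cyclic telescoping $\prod_j(s_{j-1}-1)!/(s_j-1)!=1$ removes all remaining factorials, leaving exactly the product appearing in \eqref{jpp}. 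Multiplying by the uniform density $n!/\ptK$ of $\QtnK$ on $\widetilde{\CV{K}}$ (with $K$ normalised to unit area, consistent with the applications later in the paper) closes the identity.

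The computational core above parallels the $K=\Ck$ computation of \cite{Morin}; the genuine obstacle is the soft verification that the parameterisation is bijective on a full-measure subset of $\widetilde{\CV{K}}$. Concretely one must check (i) that under $\QnK$ each $\PCP$ side carries almost surely exactly one contact point --- the bad event "two convex-hull vertices of $\zn$ share a support line parallel to a side of $K$" lies in a lower-dimensional algebraic variety and therefore has zero Lebesgue mass; (ii) that the full-sided $\PCP$ condition corresponds bijectively to $\ssk\in\Nkn$ via \Cref{rem:equ}; and (iii) that the constraint $\lk\in\mathcal{L}_K$ from \Cref{prop2} correctly encodes the admissibility of the resulting $\PCP$ geometry. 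Once these three checks are settled, the chain of identities above runs exactly as stated and yields \eqref{equmonstre2}.
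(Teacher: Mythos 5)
Your proposal is correct and follows essentially the paper's own route---condition on the $\PCP$, place the contact points on its sides, treat each corner as an independent bi-pointed triangle via \eqref{eq:bipointee}, and integrate out the tangential positions with beta integrals---and your bookkeeping (working with canonically ordered tuples so that no multinomial factor appears, the extra $1/(s_j-1)!$ per corner for the chain order, the identity $\prod_j\mathrm{d}\cp_j=\mathrm{d}\ell[\kappa]\prod_j\mathrm{d}t_j$, and the factorial telescoping) is internally consistent and reproduces \eqref{jpp} exactly. One wording caveat: displacing $\cp_j$ along the inward normal also moves the vertices $\bb_{j-1},\bb_j$ and hence $t_j$ and $t_{j+1}$, not only $\ell_j$, so the factorization of the measure holds not because the coordinates decouple but because the Jacobian is triangular with unit diagonal in the (normal, tangential) frames attached to the sides of $K$---which is what your verification should record.
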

	
	\begin{proof}
		We provide a proof of this result which is equivalent to but different from that of \cite{Morin}[Theorem 3.6.].
		
		Recall $\mathcal{C}_K(n)$ defined in page \pageref{chap3:fig2}, and let us rewrite $f^{(n)}_{K}$ as
		\[f^{(n)}_{K}\left(\ell[\kappa],\ssk\right)\mathrm{d}\ell[\kappa]=\frac{\Leb_{2n}\left(\left\{\zzn\in\mathcal{C}_K(n)\text{ s.t. }\ssk(\zzn)=\ssk,\Lj(\zzn)\in\mathrm{d}\ell[\kappa]\right\}\right)}{\ptK}.\]
		Let us compute the numerator of this expression. Conditional on a $\PCP$ with side-lengths $\Cj$ and its corners, $\zzn$ is in convex position if and only if, for all $j\in\entk$, the $s_j-1$ points in the $j^{th}$ corner form a convex chain, together with the contact points (we prove it in \cite{Morin}[Lemma 2.3.]) .
		
		Assume now for every $j\in\entk$, $\cp_j=\bb_j+x_j(\bb_{\wj}-\bb_j)$, that is, positioned at the coordinate $x_jc_j$ on the $j^{th}$ side of the $\PCP$, with $x_j\in\zerun$. Then the area of $\corner_j$ (seen as a triangle) is $(1-x_j)c_j x_{\wj}c_{\wj}\sin(\theta_j)/2$. 
		
		Hence, recalling \eqref{eq:bipointee} for the bi-pointed triangle, the Lebesgue measure of convex chains in $\corner_j$ is  \[\frac{2^{s_j-1}}{(s_j-1)!s_j!}\left(\frac{(1-x_j)c_j x_{\wj}c_{\wj}\sin(\theta_j)}{2}\right)^{s_j-1}.\]
		There are ${n \choose s_1,\ldots,s_\kappa}$ ways of choosing the vertices in the corners so to respect the size-vector $\ssk$. We now have to integrate on the position of the contact points on their respective sides:
		\begin{align*}
			\ptK\cdot f^{(n)}_{K}\left(\ell[\kappa],\ssk\right)&={n \choose s_1,\ldots,s_\kappa}\int_{0}^{1}\cdots\int_{0}^{1}\left[\prod_{j=1}^{\kappa} \left({(1-x_j)c_j x_{\wj}c_{\wj}\sin(\theta_j)}\right)^{s_j-1}\frac{c_j}{(s_j-1)!s_j!}\mathrm{d}x_j\right].
		\end{align*}
		With the classical integral identity (relative to beta distributions) $\displaystyle \int_{0}^{1}x^a(1-x)^b\mathrm{d}x=\frac{a!b!}{(a+b+1)!}$, it comes
		\begin{align*}
			\ptK\cdot f^{(n)}_{K}\left(\ell[\kappa],\ssk\right)&= n! \prod_{j=1}^{\kappa} \frac{\sin(\theta_j)^{s_j-1}c_j^{s_j+s_{\wj}-1}}{(s_j+s_{\wj}-1)!s_j!}.
		\end{align*}
		
	\end{proof}
	
	\subsection{The quantity $\AP^*(K)$ in $f^{(n)}_K$}
	
	B\'ar\'any's logarithmic equivalent of $\pk$ is expressed in terms of the quantity $\AP^*(K)$. The next step of our strategy consists in understanding how to retrieve this quantity $\AP^*(K)$ in $f^{(n)}_K$.\par
	
	We want to describe the asymptotic behaviour of the measure computed in \Cref{thm:distri2}. To do so, we will make a change of variables in order to make apparent the asymptotics of $f^{(n)}_K$ (that is, we want to express $f^{(n)}_{K}$ in terms of simple distributions), so that, eventually, its convergence will appear as a consequence of standard considerations. 
	Let us rewrite 
	\begin{align}\label{jpp2}
		f^{(n)}_{K}\left(\ell[\kappa],\ssk\right)=\frac{n!}{\ptK\prod_{j=1}^\kappa \sin(\theta_j)r_j}\cdot\Pi\cdot\mathbb{1}_{\Lj\in\mathcal{L}_K},
	\end{align}
	where $\displaystyle\Pi := \prod_{j=1}^{\kappa}\frac{r_j\cdot\sin(\theta_j)^{s_j}c_j^{s_{{j-1}}+s_j-1}}{s_j!(s_{{j-1}}+s_j-1)!}$. By \eqref{eq:EEj2}, the quantity $\Pi$ may be rewritten as
	\begin{align}\label{eq:jpp7}
		\Pi=
		\left[\prod_{j=1}^{\kappa}\frac{\left(r_jr_{\wj}\sin(\theta_j)\right)^{s_j}}{s_j!(s_{{j-1}}+s_j-1)!}\right]\cdot\prod_{j=1}^{\kappa} \left(1-\frac{\cl_j(\Lj)}{r_j}\right)^{s_{{j-1}}+s_j-1}.
	\end{align}
	
	We will interprete the first product in the rhs of \eqref{eq:jpp7} in terms of Poisson r.v. coinciding on $\ssk$:
	for two families of positive real numbers $f^{(1)}[\kappa],f^{(2)}[\kappa]$, set $\bar{f}^{(1)}=\sum_{i=1}^\kappa f^{(1)}_i$, and let $\kappa$ Poisson r.v. $\mathcal{P}^{(n)}_{1,1},\ldots,\mathcal{P}^{(n)}_{1,\kappa}$ of respective parameters $n f^{(1)}_1/\bar{f}^{(1)},\ldots,n f^{(1)}_\kappa/\bar{f}^{(1)}$, and $\kappa$ other Poisson r.v. $\mathcal{P}^{(n)}_{2,1},\ldots,\mathcal{P}^{(n)}_{2,\kappa}$ of respective parameters $nf^{(2)}_1/\bar{f}^{(1)},\ldots,nf^{(2)}_\kappa/\bar{f}^{(1)}$, all the r.v. being independent.
	
	For $\ssk\in\Nkn,$ the probability $\mathbb{M}(s[\kappa])=\P\left(\mathcal{P}^{(n)}_{1,j}=s_j,\mathcal{P}^{(n)}_{2,j}=s_{{j-1}}+s_j-1,j\in\entk\right)$ satisfies
	\begin{align}\label{jpp3}\nonumber
		\mathbb{M}(s[\kappa])&=\prod_{j=1}^{\kappa}\frac{e^{-nf^{(1)}_j/\bar{f}^{(1)}}(nf^{(1)}_j/\bar{f}^{(1)})^{s_j}}{s_j!}
		\cdot\frac{e^{-nf^{(2)}_j/\bar{f}^{(1)}}(nf^{(2)}_j/\bar{f}^{(1)})^{s_{{j-1}}+s_j-1}}{(s_{{j-1}}+s_j-1)!}\\
		&=\frac{n^{3n-\kappa} e^{-\lambda}}{(\bar{f}^{(1)})^{3n-\kappa}\prod_{j=1}^\kappa f^{(2)}_j}\prod_{j=1}^{\kappa}\frac{\left(f^{(1)}_jf^{(2)}_{\wj}f^{(2)}_j\right)^{s_j}}{s_j!(s_{{j-1}}+s_j-1)!}
	\end{align}
	where $\displaystyle\lambda =\frac{n}{\bar{f}^{(1)}}\sum_{j=1}^{\kappa} \left(f^{(1)}_j+f^{(2)}_j\right)$, so that the last term of the equation is very similar to \eqref{eq:jpp7} if we choose carefully $(f^{(1)}[\kappa],f^{(2)}[\kappa])$. In order to fit the numerator in \eqref{eq:jpp7} and \eqref{jpp3}, it suffices to pick $f^{(1)},f^{(2)}$ so that they solve the following system:
	
	\begin{align}\label{jpp14}
		\left\{
		\begin{array}{rl}\displaystyle
			f^{(2)}_j & = f^{(1)}_j+f^{(1)}_{{j-1}} \\
			f^{(1)}_jf^{(2)}_{\wj}f^{(2)}_j& = r_j\cdot r_{j+1}\cdot\sin(\theta_j),\quad j\in\entk.
		\end{array}
		\right.
	\end{align}
	The variables $f^{(2)}$ can be eliminated from \eqref{jpp14} so that, we retrieve the parametrizing system ${\sf PS}(\theta[\kappa],r[\kappa])$ stated in \eqref{eq:fj}:
	\begin{align*}
		{\sf PS}(\theta[\kappa],r[\kappa]):\left\{
		\begin{array}{ll}\displaystyle
			f^{(1)}_j\left(f^{(1)}_j+f^{(1)}_{{j-1}}\right)\left(f^{(1)}_j+f^{(1)}_{{j+1}}\right) & = r_j\cdot r_{j+1}\cdot\sin(\theta_j),\quad j\in\entk.
		\end{array}
		\right.
	\end{align*}
	
	\paragraph{The link with the affine perimeter.}
	Let us assume for a moment that we found a positive vector $(f[\kappa]:=f_1[\kappa])$ solution to ${\sf PS}$.
	
	We may now prove \Cref{thm:barany1}, which states that for all $K\in \polyktau$,  $f_j=\sqrt[3]{2T_j},$ for all $j\in\entk$. To begin with, we start by proving that the supremum over all affine perimeters is given by
	\begin{align}\label{eq:APAP}
		\AP^*(K)=2^{2/3}\sum_{i=1}^\kappa f_i.
	\end{align}
	
	\begin{proof}[Proof of \Cref{thm:barany1}]
		For any convex set $K$, recall \Cref{thm2} where the quantity $\AP^*(K)$ is defined as $\displaystyle\AP^*(K):=\max_{\substack{S \text{convex set} \\ S\subset K}} \AP(S).$ 
		Let $\ST$ be the set of convex sets contained in $K$, such that their boundary is tangent at all sides of $K$, and are composed of parabolic arcs between the tangency points. Since $K$ is in $\polyktau$, by B\'ar\'any's limit shape theorem, we know that $\Dom{K}\in \ST.$	
		Therefore, $\AP^*(K)$ may be rewritten as
		\begin{align}\label{eq:AP4}
			\AP^*(K)&=\max_{\mathcal{C}\in\ST}\AP(\mathcal{C}).
		\end{align}
		Let $\mathcal{C}\in \ST$ be one of these sets. We denote by $\pp_j=\pp_j(\mathcal{C})$ the point of tangency of $\mathcal{C}$ with the $j^{th}$ side of $K$. The affine perimeter of $\mathcal{C}$ is  given, by definition, as 
		\begin{align}\label{eq:AP}
			\AP(\mathcal{C})=2\sum_{i=1}^\kappa \sqrt[3]{T_i},
		\end{align}
		where $T_i=T_i(\mathcal{C}),i\in\entk$ denotes the area of the triangle delineated by the vertices $\pp_{i},\ve_{{i+1}},\pp_{{i+1}}$ (see \Cref{chap3:fig17}). 
		{\begin{figure}[h]
				\centering
				\input{Figure_17.txt}
				\caption{If $K$ is in $\polyktau,$ pick $\mathcal{C}\in\ST$ a curve (drawn in green) that is tangent to every side of $K$ at $\pp_j$ for the $j^{th}$ side. This curve is not necessarily the boundary of $\Dom{K}.$ The triangles hached with dots are the triangle $T_i$, $i\in\entk$.} \label{chap3:fig17}
			\end{figure}
		}
		
		Set $u_j=u_j(\mathcal{C}):=\norm{\ve_j-\pp_j}/r_j$. With this notation, rewriting the area $T_i$ in terms of the geometric properties of $K$ and $\mathcal{C}$ turns \eqref{eq:AP} into
		\begin{align}\label{eq:AP2}\nonumber
			\AP(\mathcal{C})&=2\sum_{i=1}^\kappa \left(r_iu_i\cdot r_{{i+1}}(1-u_{{i+1}})\cdot \sin(\theta_i)/2\right)^{1/3}\\
			&=2^{2/3}\psi(r[\kappa],\theta[\kappa],u[\kappa]),
		\end{align}
		where $\psi(r[\kappa],\theta[\kappa],u[\kappa]):=\sum_{i=1}^\kappa \left(r_iu_i\cdot r_{{i+1}}(1-u_{{i+1}})\cdot \sin(\theta_i)\right)^{1/3}$. Therefore, the quantity $\AP^*(K)$ is, by \eqref{eq:AP4}, given by
		\begin{align}\label{eq:AP3}
			\AP^*(K)&=2^{2/3}\max_{{u[\kappa]\in\zerun^\kappa}}\psi(r[\kappa],\theta[\kappa],u[\kappa]),
		\end{align}
		since taking the supremum on all $u[\kappa]\left(:=u[\kappa](\mathcal{C})\right)$ results in taking the supremum on the $u[\kappa]$ in $\zerun$ (and it becomes a $\max$ since we know $\AP^*(K)$ is attained for some $\mathcal{C}$).
		By \eqref{eq:fj}, we have \[\psi(r[\kappa],\theta[\kappa],u[\kappa])=\sum_{i=1}^{\kappa}\left[f_i\left(f_i+f_{{i-1}}\right)u_i\left(f_i+f_{{i+1}}\right)(1-u_{{i+1}})\right]^{1/3}.\]

		By the classical convexity inequality $\displaystyle (abc)^{1/3}\leq (a+b+c)/3$ for all $a,b,c\geq0$, we easily see that for all ${u[\kappa]\in\zerun^\kappa}$, we have 
		\[\psi(r[\kappa],\theta[\kappa],u[\kappa])\leq\frac1{3}\sum_{i=1}^{\kappa}\left[f_i+\left(f_i+f_{{i-1}}\right)u_i+\left(f_i+f_{{i+1}}\right)(1-u_{{i+1}})\right]=\sum_{i=1}^{ \kappa}f_i. \]
		Hence, \eqref{eq:AP3} yields $\AP^*(K)\leq2^{2/3}\sum_{i=1}^{ \kappa}f_i.$
		Set now $\displaystyle w_j:=\frac{f_j}{f_{{j-1}}+f_j}$ for all $j\in\entk$. We have $\psi(r[\kappa],\theta[\kappa],w[\kappa])=\sum_{i=1}^{\kappa}f_i$, which yields \eqref{eq:APAP} and \eqref{eq:wj}.
		
		\par Now, substituting $w[\kappa]$ in ${\sf PS}$ gives 
		\[f_j^3=w_jr_j(1-w_{\wj})r_{\wj}\sin(\theta_j)=2T_j,\text{ for all }j\in\entk.\] 
		Hence, if $f[\kappa]$ is a solution to ${\sf PS}(\theta[\kappa],r[\kappa])$, it is equal to $((2T_1)^{1/3},\ldots,(2T_\kappa)^{1/3})$, where the quantities $T[\kappa]$ are those of the maximizing curve $\mathcal{C}=\Dom{K}$ which is unique. It means that a solution to ${\sf PS}(\theta[\kappa],r[\kappa])$ is unique. Conversely, we can check that $(\sqrt[3]{2T_1},\ldots,\sqrt[3]{2T_\kappa})$ is solution to ${\sf PS}$ with such $w[\kappa]$. This gives \Cref{thm:barany1}.
		
	\end{proof}
	
	\subsection*{Proof of \Cref{thm:tangency}:}
	
	Set $\bar{f}=\sum_{i=1}^{\kappa}f_i$, and the family $g[\kappa]$ of real numbers defined for all $j\in\entk$ as \[g_j:=f_j/\bar{f}.\] For the $\lambda$ defined below \eqref{jpp3}, we have $\lambda=3n$, and we rewrite the joint ``distribution'' $f^{(n)}_K$ as 
	\begin{align}\label{eq:rewritingjoint}
		f^{(n)}_{K}\left(\ell[\kappa],\ssk\right)=\Faktor(n)\mathbb{1}_{\Lj\in\mathcal{L}_K}\mathbb{M}(s[\kappa])\cdot\prod_{j=1}^\kappa\left(1-\frac{\cl_j(\Lj)}{r_j}\right)^{s_{{j-1}}+s_j-1}.
	\end{align}
	where $\displaystyle \Faktor(n):=\left(\prod_{j=1}^\kappa\frac{f_j+f_{{j-1}}}{\sin(\theta_j)r_j}\right)\cdot\frac{n!~e^{3n}(\bar{f})^{3n-\kappa}}{\ptK~n^{3n-\kappa}}$, and $\mathbb{M}(s[\kappa])$ was introduced in \eqref{jpp3}.
	
	We get to the main theorem of this section, which describes the asymptotic behaviour of the renormalized geometric variables of the $\PCP.$
	\begin{Theoreme}\label{thm:commeavant}
		Let $\zn$ under $\QtnK$, and consider $\lk=\Lj(\zn),\sk=\ssk(\zn)$. 
		We introduce the random variables $\widebar{\Bell}^{(n)}[\kappa]=n\lk$ and $\displaystyle\mathbf{x}_j^{(n)}=({\snj-ng_j})/{\sqrt{n}}$, for all $j\in\ent{1}{\kappa-1}$.
		The following convergence in distribution holds in $\RR^{2\kappa-1}$:
		\[\left(\widebar{\Bell}_1^{(n)},\ldots,\widebar{\Bell}_\kappa^{(n)},\mathbf{x}_1^{(n)},\ldots,\mathbf{x}_{\kappa-1}^{(n)}\right)\dd\left(\widebar{\Bell}_1,\ldots,\widebar{\Bell}_\kappa,\mathbf{x}_1,\ldots,\mathbf{x}_{\kappa-1}\right),\]
		where the variables $\widebar{\Bell}[\kappa]$ are independent from the $\mathbf{x}[\kappa-1]$, the r.v. $\widebar{\Bell}_j$ is exponentially distributed with rate $m_j$ (see equation \eqref{mj}), and $\mathbf{x}=\mathbf{x}[\kappa-1]$ is a centered Gaussian random vector whose inverse covariance matrix $\Sigma_K^{-1}$ and $\mathbb{d}_K=\det\left(\Sigma_K^{-1}\right)$ were given in \eqref{eq:matrix}. Note that the joint density $\chi_K$ of $\left(\widebar{\Bell}_1,\ldots,\widebar{\Bell}_\kappa,\mathbf{x}_1,\ldots,\mathbf{x}_{\kappa-1}\right)$ is
		\begin{align}\label{eq:jointdistribution}
			\chi_K(\Lj,x[\kappa-1]):=\prod_{j=1}^{\kappa}m_j\exp\left(-m_j\widebar{\ell}_j\right)\cdot\sqrt{\frac{\mathbb{d}_K}{(2\pi)^{\kappa-1}}}\exp\left(-\frac{1}{2} {}^t x\Sigma_K^{-1}x\right).
		\end{align}
	\end{Theoreme}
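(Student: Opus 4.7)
The plan is to establish this convergence via a local limit theorem obtained by analyzing the three factors in the rewriting
\[f^{(n)}_{K}\!\left(\ell[\kappa],\ssk\right)=\Faktor(n)\,\mathbb{1}_{\Lj\in\mathcal{L}_K}\,\mathbb{T}(s[\kappa])\,\prod_{j=1}^{\kappa}\!\left(1-\frac{\widetilde{\cl}_j(\Lj)}{r_j}\right)^{s_{{j-1}}+s_j-1}\]
appearing just above the theorem, under the change of variables $\ell_j=\widebar{\ell}_j/n$ and $s_j=ng_j+\sqrt{n}\,x_j$. I would show that each of the three factors admits an explicit pointwise limit; their product will reproduce the density $g_K$ of \eqref{eq:jointdistribution}, and Scheff\'e's lemma (with tightness coming from the exponential and Gaussian tails) will upgrade this into convergence in distribution. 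The overall scheme is the one used in the regular-$\kappa$-gon case of \cite{Morin}, which I now have to adapt to arbitrary $\theta[\kappa]$ and $r[\kappa]$.

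For the product $\prod_{j=1}^{\kappa}(1-\widetilde{\cl}_j/r_j)^{s_{j-1}+s_j-1}$, since $\widetilde{\cl}_j(\widebar{\ell}[\kappa]/n)/r_j=O(1/n)$ is a linear combination of the $\widebar{\ell}_i/n$ with coefficients depending on $\theta[\kappa]$ and $r[\kappa]$, and since $s_{j-1}+s_j-1=n(g_{j-1}+g_j)+O(\sqrt{n})$, the Taylor expansion $\log(1-a/n)=-a/n+O(1/n^2)$ shows that the total exponent converges pointwise to the linear expression $-\sum_j(g_{j-1}+g_j)\widetilde{\cl}_j(\widebar{\ell}[\kappa])/r_j$. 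Collecting the terms of this expression according to the variables $\widebar{\ell}_j$, one recovers exactly the coefficients $m_j$ of \eqref{mj}, so this product converges to $\prod_j e^{-m_j\widebar{\ell}_j}$. Crucially, in the limit this factor depends on $\widebar{\ell}$ only and not on $x$; this is the mechanism guaranteeing the asymptotic independence of $\widebar{\Bell}$ and $\mathbf{x}$.

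For the factor $\mathbb{T}(s[\kappa])$, the $2\kappa$ Poisson variables being independent, applying the local central limit theorem for the Poisson distribution to each yields
\[n^\kappa\,\mathbb{T}(s[\kappa])\;\sim\;\frac{1}{(2\pi)^\kappa\prod_{j=1}^{\kappa}\sqrt{g_j(g_{j-1}+g_j)}}\,\exp\!\left(-\tfrac{1}{2}Q(x[\kappa])\right),\]
where $Q(x)=\sum_j x_j^2/g_j+\sum_j(x_{j-1}+x_j)^2/(g_{j-1}+g_j)$. The constraint $\ssk\in\Nkn$ forces $\sum_j s_j=n$, hence $\sum_j x_j=0$ in the limit. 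Substituting $x_\kappa=-\sum_{j<\kappa}x_j$ in $Q$ then yields a quadratic form on $\mathbb{R}^{\kappa-1}$ whose matrix, after a direct algebraic expansion, is exactly $\Sigma_K^{-1}$ as in \eqref{eq:matrix}.

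It remains to match the numerical prefactor: combining $\Faktor(n)$ with the Jacobian $n^{-\kappa}$ coming from $d\ell_j=d\widebar{\ell}_j/n$ and with the discrete-to-continuous scaling $n^{(\kappa-1)/2}$ from the local CLT (one power of $\sqrt{n}$ being absorbed by the linear constraint $\sum x_j=0$), one must reach $\prod_j m_j\cdot\sqrt{\mathbb{d}_K/(2\pi)^{\kappa-1}}$. The cleanest way to handle this is Scheff\'e's lemma: the three previous limits produce a nonnegative function whose integral over $\mathbb{R}_+^{\kappa}\times\mathbb{R}^{\kappa-1}$ equals $1$ (being a product of exponentials and a Gaussian), and $f^{(n)}_K$ is a probability density for every $n$; hence the asymptotic value of the numerical prefactor is forced, and no separate Stirling expansion of $\ptK$ is needed for this theorem. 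The main technical obstacle is the algebraic identification of $\Sigma_K^{-1}$ from $Q$ after the constrained substitution; every other step reduces to standard Taylor expansion and the Poisson local limit theorem.
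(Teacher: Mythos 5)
Your route is the same as the paper's: the same factorization of $f^{(n)}_K$ into the prefactor $\Faktor(n)$, the indicator, the Poisson-coincidence factor $\mathbb{T}$, and the product $\prod_j(1-\widetilde{\cl}_j/r_j)^{s_{j-1}+s_j-1}$; the same Taylor expansion giving the exponential factor with the rates $m_j$ of \eqref{mj}; and the same Poisson local limit theorem giving the Gaussian factor, with the constraint $\sum_j s_j=n$ eliminated through $x_\kappa=-\sum_{j<\kappa}x_j$ to produce $\Sigma_K^{-1}$. Those identifications are correct and match the paper's Step 1.

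The gap is in the final normalization step. Invoking Scheff\'e's lemma requires that the genuine densities of $\bigl(\widebar{\Bell}^{(n)}[\kappa],\mathbf{x}^{(n)}[\kappa-1]\bigr)$ converge pointwise to the density $g_K$; but these densities are $\Faktor(n)$ (equivalently $\omega(n,K)$, which contains the unknown $\ptK$) times the two factors you control, so pointwise convergence to $g_K$ is equivalent to the convergence of the prefactor to the right constant --- which is exactly what you claim is ``forced''. The normalization $\int g^{(n)}_K=1$ together with pointwise convergence of the non-prefactor part only yields, via Fatou, that $\limsup_n \omega(n,K)\le 1$; the reverse inequality, i.e.\ that no mass of the prelimit product escapes to infinity, requires uniform-in-$n$ tail estimates on the prelimit functions, not merely the observation that the limit has exponential and Gaussian tails. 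This is precisely the content of \Cref{lemdebase} and of the uniform-integrability Step 2 that the paper imports from \cite{Morin}; without it your argument is circular at this point. (Two further small elisions: the passage from the sum over $\ssk\in\Nkn$ to an integral, with the floor functions and the $\mathbf{U}_j/\sqrt{n}$ smoothing, needs to be stated so that ``density in $x$'' makes sense; and note that, contrary to your remark, the paper likewise needs no Stirling input for the convergence statement itself --- Stirling only enters afterwards to convert $\omega(n,K)\to1$ into the equivalent of $\ptK$, which is in fact the payoff one wants from controlling the prefactor rather than bypassing it.)
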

	
	\begin{Remarque}
		Note that we do not use the variable $\displaystyle\mathbf{x}_\kappa^{(n)}:=({\mathbf{s}^{(n)}_\kappa-ng_\kappa})/{\sqrt{n}}$ since its data are contained within $\mathbf{x}^{(n)}[\kappa-1]$ as $\mathbf{x}_\kappa^{(n)}=-\sum_{j=1}^{\kappa-1}\mathbf{x}_j^{(n)}$.
		
		In fact this theorem states that asymptotically, $\mathbf{x}^{(n)}[\kappa-1]$ has a density with respect to the Lebesgue measure on $\RR^{\kappa-1}.$ But we can also deduce that $\left(\widebar{\Bell}_1^{(n)},\ldots,\widebar{\Bell}_\kappa^{(n)},\mathbf{x}_1^{(n)},\ldots,\mathbf{x}_{\kappa}^{(n)}\right)$ converges in distribution and that for all bounded continuous test function $\psi:\RR_{+}^\kappa\times\RR^{\kappa}\to\RR$,  
		$$\mathbb{E}\left[\psi\left(\widebar{\Bell}^{(n)}[\kappa],\mathbf{x}^{(n)}[\kappa]\right)\right]\cvg \mathbb{E}\left[\psi\left(\widebar{\Bell}[\kappa],\mathbf{x}_1,\ldots,\mathbf{x}_{\kappa-1},-\mathbf{x}_1-\ldots-\mathbf{x}_{\kappa-1}\right)\right].$$
	\end{Remarque}
	
	Before we get to the proof of this theorem, we recall the following (simple but) powerful lemma that we also used in \cite{Morin}[Lemma 4.3] (and which is proven there).
	
	\begin{Lemma}\label{lemdebase}
		Let $(\chi_n)_{n\in\NN}$ be a sequence of nonnegative measurable functions on $\RR^d$. Assume that for all $\varepsilon>0$, there exists a compact set $K_\varepsilon$ such that for all $n$ large enough, $\int_{K_\varepsilon^c}\chi_n<\varepsilon$ (where $K_\varepsilon^c$ is the complement of $K_\varepsilon$ in $\RR^d$), and that $\chi_n$ uniformly converges on all compact sets of $\RR^d$ towards a density $\chi$ (with respect to the Lebesgue measure on $\RR^d$). Then, there exists a sequence $(\alpha_n)_{n\in\NN}$ such that for $n$ large enough (for small values of $n$, $\chi_n$ could be zero), $\frac{1}{\alpha_n}\chi_n$ is a density and $\alpha_n\underset{n\to+\infty}{\longrightarrow}1.$
	\end{Lemma}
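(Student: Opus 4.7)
The natural candidate is to normalize by the total mass itself, namely to set
\[
\alpha_n := \int_{\RR^d} g_n(x)\,\mathrm{d}x,
\]
and to show both that this quantity is finite for $n$ large enough and that $\alpha_n \to 1$. Once this is established, $g_n/\alpha_n$ is manifestly a density (nonnegative measurable with integral equal to one), and the statement of the lemma follows.

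The plan is to fix $\varepsilon>0$, take the compact set $K_\varepsilon$ provided by the hypothesis, and split the integral as
\[
\alpha_n = \int_{K_\varepsilon} g_n(x)\,\mathrm{d}x + \int_{K_\varepsilon^c} g_n(x)\,\mathrm{d}x.
\]
Since $K_\varepsilon$ has finite Lebesgue measure, the uniform convergence of $g_n$ to $g$ on $K_\varepsilon$ forces
\[
\int_{K_\varepsilon} g_n(x)\,\mathrm{d}x \;\underset{n\to\infty}{\longrightarrow}\; \int_{K_\varepsilon} g(x)\,\mathrm{d}x,
\]
and in particular guarantees that the first piece is eventually finite; the second piece is bounded by $\varepsilon$ for $n$ large by hypothesis. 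Hence $\alpha_n$ is finite for $n$ large and
\[
\Bigl|\alpha_n - \int_{K_\varepsilon} g(x)\,\mathrm{d}x\Bigr| \leq \varepsilon + o(1).
\]

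It remains to compare $\int_{K_\varepsilon} g$ to $1$. Since $g_n \geq 0$ and converges pointwise to $g$, Fatou's lemma applied to $\mathbb{1}_{K_\varepsilon^c}g_n$ gives
\[
\int_{K_\varepsilon^c} g(x)\,\mathrm{d}x \leq \liminf_{n\to\infty} \int_{K_\varepsilon^c} g_n(x)\,\mathrm{d}x \leq \varepsilon,
\]
so that, since $g$ is a density, $\int_{K_\varepsilon}g \geq 1-\varepsilon$. Combined with the obvious upper bound $\int_{K_\varepsilon} g \leq 1$, this yields $\limsup_n \alpha_n \leq 1+\varepsilon$ and $\liminf_n \alpha_n \geq 1-\varepsilon$. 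As $\varepsilon$ was arbitrary, $\alpha_n \to 1$, and the proof is complete.

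The only subtle point is the passage from the tightness bound $\int_{K_\varepsilon^c}g_n<\varepsilon$ (valid only for $g_n$) to the same bound for the limit $g$, which is precisely where Fatou's lemma intervenes; everything else is a standard compact/non-compact splitting exploiting finite Lebesgue volume of $K_\varepsilon$ together with uniform convergence on it.
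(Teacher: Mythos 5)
Your proposal is correct: normalizing by $\alpha_n=\int_{\RR^d}g_n$, splitting the integral over $K_\varepsilon$ and $K_\varepsilon^c$, using uniform convergence on the compact (of finite Lebesgue measure) for the first piece, the tightness hypothesis for the second, and Fatou (legitimate, since uniform convergence on compacts gives pointwise convergence everywhere) to transfer the tail bound to $g$, is exactly the standard argument; the paper itself does not reprove the lemma but defers to \cite{Morin}, and your route is the expected one. Two cosmetic remarks: the lower bound is cleanest as $\alpha_n\geq\int_{K_\varepsilon}g_n\to\int_{K_\varepsilon}g\geq 1-\varepsilon$ (your displayed inequality $|\alpha_n-\int_{K_\varepsilon}g|\leq\varepsilon+o(1)$ only gives $1-2\varepsilon$, which is of course just as good), and you should note explicitly that $\alpha_n\to1$ forces $\alpha_n>0$ for $n$ large, so the division defining the density is licit.
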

	
	\begin{proof}[Proof of \Cref{thm:commeavant}]
		The successive steps of this proof are generalizations of those of \cite{Morin}[Theorem 4.1.]. 		
		The proof should be carried out in two steps, but only the first one is detailed, as the second is pretty much identical to the that given in \cite{Morin}[Theorem 4.1.]. These steps are as follows:
		\begin{enumerate}
			\item We show the uniform convergence on compact sets of the ``joint density'' $f^{(n)}_K$ of the pair $\left(\widebar{\Bell}^{(n)}[\kappa],\mathbf{x}^{(n)}[\kappa-1]\right)$. More exactly, we show the uniform convergence on compact sets of a density $\chi^{(n)}_K$ introduced in (\ref{gn}) associated to these random variables to $\chi_K$.
			\item At this point we should give an argument of uniform integrability for this limit to apply \Cref{lemdebase} and conclude. This part will be skipped as it is only a rewriting of the second step of \cite{Morin}[Theorem 4.1.].
		\end{enumerate}
		
		We detail {\bf Step 1:} Let $\psi:\RR_{+}^\kappa\times\RR^{\kappa-1}\to\RR$ be a bounded continuous test function and let us pass to the limit in the expectation 
		\begin{align}\label{eq:exp}
			\mathbb{E}\left[\psi\left(\widebar{\Bell}^{(n)}[\kappa],\mathbf{x}^{(n)}[\kappa-1]\right)\right]&=\sum_{\ssk\in\NN_K(n)}\int_{\RR^\kappa}\psi\left(n\ell[\kappa],\frac{s-ng}{\sqrt{n}}[\kappa-1]\right)f^{(n)}_{K}\left(\ell[\kappa],\ssk\right)\mathrm{d}\ell[\kappa],
		\end{align}
		where the joint distribution $f^{(n)}_{K}$ of the couple $(\lk,\sk)$ is given in \Cref{thm:distri2}.
		
		We want to perform substitutions of the type $\widebar{\ell}^{(n)}_j=n\ell_j$ and $\displaystyle x^{(n)}_j={(s_j-ng_j)}{/\sqrt{n}}$ in the right-hand side of \eqref{eq:exp}, so to prove that $\widebar{\Bell}^{(n)}_j$ converges to an exponentially distributed variable and $\mathbf{x}^{(n)}[\kappa-1]$ to a Gaussian vector.
		
		To make the support of the Gaussian vector apparent, let us turn our sum over $\ssk\in\NN_K(n)$ into an integral. We set $s_j=\flr{q_j}$ for all $j\in\entk$, where $q[\kappa-1]$ browses the set $\bNkn:=\left\{q[\kappa-1],\text{ with }q_j>0,\tst\sum_{j=1}^{\kappa-1} \flr{q_j}\leq n\right\}$, with convention set to $\flr{q_\kappa}:=n-\sum_{j=1}^{\kappa-1} \flr{q_j}$ (notice that there is no integration with respect to $q_\kappa$).
		
		With \eqref{eq:rewritingjoint}, we get to :
		\begin{multline*}
			\mathbb{E}\left[\psi\left(\widebar{\Bell}^{(n)}[\kappa],\mathbf{x}^{(n)}[\kappa-1]\right)\right]=\Faktor(n)\int_{\bNkn}\int_{\mathcal{L}_K}\psi\left(n\ell[\kappa],\frac{\flr{q}-ng}{\sqrt{n}}[\kappa-1]\right)\mathbb{M}(\lfloor q[\kappa]\rfloor)\\
			\times\prod_{j=1}^{\kappa} \left(1-\frac{\cl_j(\Lj)}{r_j}\right)^{\lfloor q_j\rfloor+\lfloor q_{\wj}\rfloor-1}\cdot\mathrm{d}\ell[\kappa]\mathrm{d}q[\kappa-1],
		\end{multline*}
		but let us rather consider the expectation
		\begin{multline}\label{eq:cool}
			\mathbb{E}\left[\psi\left(\widebar{\Bell}^{(n)}[\kappa],\widebar{\mathbf{x}}^{(n)}[\kappa-1]\right)\right]=\Faktor(n)\int_{\bNkn}\int_{\mathcal{L}_K}\psi\left(n\ell[\kappa],\frac{{q}-ng}{\sqrt{n}}[\kappa-1]\right)\mathbb{M}(\lfloor q[\kappa]\rfloor)\\
			\times\prod_{j=1}^{\kappa} \left(1-\frac{\cl_j(\Lj)}{r_j}\right)^{\lfloor q_j\rfloor+\lfloor q_{\wj}\rfloor-1}\cdot\mathrm{d}\ell[\kappa]\mathrm{d}q[\kappa-1],
		\end{multline} (we withdrew the floor function in $\psi$), where for all $j\in\{1,\ldots,\kappa-1\}$, we set $\widebar{\mathbf{x}}^{(n)}_j:=\mathbf{x}^{(n)}_j+\mathbf{U}_j/\sqrt{n}$, with $\mathbf{U}_j$ a r.v. uniformly distributed in $\zerun$.
		
		We replaced a sum by an integral and this amounts to representing a discrete random variable by a continuous one, \ie if $X$ has a discrete law, $\mathbb{P}(X=k)=p_k,k\in\ZZ$, then $X\sur{=}{(d)}\lfloor X+U\rfloor$, where $U$ is uniform in $\zerun$. Then, 
		\[\sum_{k\in\ZZ} p_kf(k)=\int_\RR f(\lfloor x\rfloor) p_{\lfloor x\rfloor}\mathrm{d}x.\]
		We are going to prove first that $\mathbb{E}\left[\psi\left(\widebar{\Bell}^{(n)}[\kappa],\widebar{\mathbf{x}}^{(n)}[\kappa-1]\right)\right]$ converges to deduce that its counterpart $\mathbb{E}\left[\psi\left(\widebar{\Bell}^{(n)}[\kappa],{\mathbf{x}}^{(n)}[\kappa-1]\right)\right]$ converges as well, to the same limit.
		
		We can finally make our substitutions $\widebar{\ell}_j:=\widebar{\ell}^{(n)}_j=n\ell_j$ for all $j\in\entk$ and $\displaystyle x_j:=x^{(n)}_j={(q_j-ng_j)}/{\sqrt{n}}$ for all $j\in\ent{1}{\kappa-1}$ in \eqref{eq:cool} where we have hidden the dependence on $n$ to unburden our equations. We reach
		\begin{align}\label{jpp1}
			\mathbb{E}\left[\psi\left(\widebar{\Bell}^{(n)}[\kappa],\widebar{\mathbf{x}}^{(n)}[\kappa-1]\right)\right]=\int_{\RR^{\kappa-1}}\int_{\RR_+^\kappa}\psi(\widebar{\ell}[\kappa],x[\kappa-1])\chi^{(n)}_K(\widebar{\ell}[\kappa],x[\kappa-1])\mathrm{d}\widebar{\ell}[\kappa]\mathrm{d}x[\kappa-1],
		\end{align}	
		where for all $n\geq 3,$ $\chi^{(n)}_K$ stands for the joint distribution of the pair $\left(\widebar{\Bell}^{(n)}[\kappa],\widebar{\mathbf{x}}^{(n)}[\kappa-1]\right)$. The function $\chi^{(n)}_K$ can be decomposed as follows
		\begin{align}\label{gn}
			\chi^{(n)}_K\left(\widebar{\ell}[\kappa],x[\kappa-1]\right):=\omega(n,K)~h^{(1)}_n(\widebar{\ell}[\kappa],x[\kappa-1])~h^{(2)}_n(x[\kappa-1]),
		\end{align}
		with 
		\begin{multline}\label{equ7}
			\omega(n,K)=\left[\prod_{j=1}^\kappa\frac{f_j+f_{{j-1}}}{\sin(\theta_j)r_j}\cdot\frac{n!~e^{3n}(\bar{f})^{3n-\kappa}}{\ptK~n^{3n-\kappa}}\right]\left[\frac{1}{n^\kappa\sqrt{(2\pi)^{\kappa+1}\mathbb{d}_K}\prod_{j=1}^\kappa\sqrt{g_j(g_j+g_{\wj})}}\right]\\\times\left[\prod_{j=1}^\kappa\frac1{m_j}\right]\frac{1}{n^{(\kappa+1)/2}},
		\end{multline} which can be rewritten as
		\begin{align}
			\omega(n,K)&=\frac{1}{\sqrt{(2\pi)^{\kappa+1}\mathbb{d}_K}}\cdot\left[\prod_{j=1}^\kappa\sqrt{\frac{f_j+f_{{j-1}}}{f_j}}\frac{1}{m_j\sin(\theta_j)r_j}\right]\cdot\frac{n!~e^{3n}(\bar{f})^{3n}}{\ptK~n^{3n}}\frac{1}{n^{(\kappa+1)/2}},
		\end{align}
		and
		\begin{align}
			h^{(1)}_n\left(\widebar{\ell}[\kappa],x[\kappa-1]\right)&=\mathbb{1}_{\widebar{\ell}[\kappa] \in n\mathcal{L}_K}~\prod_{j=1}^\kappa m_j\left(1-\frac{\cl_j(\Lj)}{n\cdot r_j}\right)^{d^{(2)}_j(x[\kappa-1])},\\
			h^{(2)}_n\left(x[\kappa-1]\right)&=\sqrt{\frac{\mathbb{d}_K}{(2\pi)^{\kappa-1}}}\prod_{j=1}^\kappa (2\pi n)\sqrt{g_j(g_j+g_{\wj})}\cdot\mathbb{M}(d^{(1)}\left(x[\kappa-1]\right)),\\[2pt]
			\intertext{where, with convention $x_\kappa:=-\sum_{j=1}^{\kappa-1}x_j$, we set}\label{eq:conventions1}
			d^{(1)}_j\left(x[\kappa-1]\right)&=\lfloor ng_j + \sqrt{n}x_j \rfloor,\text{ for all }j\in\entk,\\[10pt]
			\label{eq:conventions2}d^{(2)}_j\left(x[\kappa-1]\right)&=\lfloor ng_j + \sqrt{n}x_j \rfloor+\lfloor ng_{j-1} + \sqrt{n}x_{j-1} \rfloor-1,\text{ for all }j\in\entk.
		\end{align}
		
		Notice here that there is a small approximation with this convention : indeed, with this change of variable, the natural definition of $d^{(1)}_\kappa\left(x[\kappa-1]\right)$ in the integral corresponding to the expectation $\mathbb{E}\left[\psi\left(\widebar{\Bell}^{(n)}[\kappa],\widebar{\mathbf{x}}^{(n)}[\kappa-1]\right)\right]$ should be $d^{(1)}_\kappa\left(x[\kappa-1]\right):=n-{\sum_{j=1}^{\kappa-1}\flr{\sqrt{n}x_j+ng_j}}$ to be consistent (the same problem appears for $d^{(2)}_{j}=\flr{q_j}+\flr{q_{j+1}}-1$ when $j\in\{\kappa-1,\kappa\})$. This means that the definitions \eqref{eq:conventions1} and \eqref{eq:conventions2} correspond to the expectation $\mathbb{E}\left[\psi\left(\widebar{\Bell}^{(n)}[\kappa],\widetilde{\mathbf{x}}^{(n)}[\kappa-1]\right)\right]$ for some variables $\widetilde{\mathbf{x}}^{(n)}[\kappa-1]$ close to $\widebar{\mathbf{x}}^{(n)}[\kappa-1]$.  But it is clear that with
		\[\frac{n-{\sum_{j=1}^{\kappa-1}\flr{\sqrt{n}x_j+ng_j}}}{\flr{\sqrt{n}x_\kappa+ng_\kappa}}\cvg 1,\] we can easily prove that $\abso{\widebar{\mathbf{x}}^{(n)}[\kappa-1]-\widetilde{\mathbf{x}}^{(n)}[\kappa-1]}\proba 0$ and thus by Slutsky's Lemma, the limits in distribution of $\left(\widebar{\Bell}^{(n)}[\kappa],\widebar{\mathbf{x}}^{(n)}[\kappa-1]\right)$ and $\left(\widebar{\Bell}^{(n)}[\kappa],\widetilde{\mathbf{x}}^{(n)}[\kappa-1]\right)$ are the same.
		
		We have arranged the factors so that, as we will see, $h^{(1)}_n$ and $h^{(2)}_n$ converge to some probability densities (and since a density has integral 1, this will be a crucial point for the conclusion).
		
		Note first that there exists $\eta>0$ such that $[0,\eta]^\kappa\subset \mathcal{L}_K$ and thus, we have $n\mathcal{L}_\kappa\cvg \RR_{+}^\kappa$. Then for every compact $H\subset \RR_{+}^\kappa$, and for all $\varepsilon>0,$ there exists $n_0\in\NN$ such that for all $n\geq n_0,$ $H\subset n\mathcal{L}_K$, \ie $\norm{\indic{{\widebar{\ell}[\kappa]\in n\mathcal{L}_K}}-1}=0<\varepsilon$, so that the map $\widebar{\ell}[\kappa]\mapsto\indic{{\widebar{\ell}[\kappa]\in n\mathcal{L}_K}}$ converges uniformly to the constant function $1$ on every compact set of $\RR_+^\kappa$. Now by the standard approximation $$\left(1-\frac{a}{n}\right)^{nb}\underset{n\to+\infty}{\longrightarrow}e^{-ab}$$ uniformly for $(a,b)$ on every compact set of $\RR^2$, and using additionnaly the fact that $\mathbb{1}_{\widebar{\ell}[\kappa]\in n\mathcal{L}_K}\underset{n\to+\infty}{\longrightarrow}1$ uniformly on every compact set of $\RR_+^\kappa$, we get that $h^{(1)}_n$ 
		converges uniformly on every compact set of $\RR_{+}^\kappa\times\RR^{\kappa-1}$ towards $h^{(1)}$, with 
		\begin{align}\label{equ8}
			h^{(1)}\left(\widebar{\ell}[\kappa],x[\kappa-1]\right)=\prod_{j=1}^{\kappa}m_j\exp\left(-m_j\widebar{\ell}_j\right).
		\end{align}

		Now thanks to the local limit theorem for Poisson r.v. \cite[Theorem VII.1.1]{petrov1975sums}, $h_n^{(2)}$ converges uniformly towards $h^{(2)}$, where $h^{(2)}$ is defined for $x[\kappa-1]\in\RR^{\kappa-1},$ as
		\begin{align}\label{eq:jpp8}
			h^{(2)}(x[\kappa-1]):=\sqrt{\frac{\mathbb{d}_K}{(2\pi)^{\kappa-1}}}\prod_{j=1}^\kappa\exp\left(-\frac{1}{2} \left[\frac{x_j^2}{g_j}+\frac{(x_j+x_{\wj})^2}{g_j+g_{\wj}}\right]\right).
		\end{align}
		
		The map $h^{(2)}$ can be rewritten as 
		\[h^{(2)}(x[\kappa-1])=\sqrt{\frac{\mathbb{d}_K}{(2\pi)^{\kappa-1}}}\exp\left(-\frac{1}{2} {}^t x\Sigma_K^{-1}x\right),\]
		where $\Sigma_K^{-1}$ given in \Cref{thm:tangency}, corresponds to the matrix associated to the exponential product in \eqref{eq:jpp8}, and $\mathbb{d}_K=\det\left(\Sigma_K^{-1}\right)$ is its determinant.
		\par We have established the following uniform convergence on every compact set of $\RR_+^{\kappa}\times\RR^{\kappa-1}$ :
		\begin{align}\label{equ2}
			\frac{1}{\omega(n,K)}\chi^{(n)}_K(\widebar{\ell}[\kappa],x[\kappa-1])\underset{n\to +\infty}{\longrightarrow} \chi_{K}(\widebar{\ell}[\kappa],x[\kappa-1]).
		\end{align}
		This concludes the first step of our proof. \\
		As said in introduction, the uniform integrability is the same as in \cite{Morin}[Theorem 4.1.], so we skip this step and send the reader to \cite{Morin} for the remaining details. 
		However, the conclusion of this proof is the same !
		
		This proves that $\chi^{(n)}_{K}$ converges pointwise to $\chi_{K}$, or by definition, that $(\widebar{\Bell}^{(n)}[\kappa],\widebar{\mathbf{x}}^{(n)}[\kappa-1])\dd (\widebar{\Bell}[\kappa],\mathbf{x}[\kappa-1])$. Now, since $\abso{\widebar{\mathbf{x}}^{(n)}[\kappa-1]-\mathbf{x}^{(n)}[\kappa-1]}\proba0$, then by Slutsky's Lemma, \[(\widebar{\Bell}^{(n)}[\kappa],{\mathbf{x}}^{(n)}[\kappa-1])\dd (\widebar{\Bell}[\kappa],\mathbf{x}[\kappa-1]).\]
		
		In the same time, we obtain that $\omega(n,K)\underset{n\to+\infty}{\longrightarrow}1$, which, with Stirling's formula, leads to
		\begin{align}\label{eq:moltoimportante}
			\ptK\underset{n\to+\infty}{\sim} \frac{1}{(2\pi)^{\kappa/2}\sqrt{\mathbb{d}_K}}\cdot\left[\prod_{j=1}^\kappa\sqrt{\frac{f_j+f_{{j-1}}}{f_j}}\frac{1}{m_j\sin(\theta_j)r_j}\right]\cdot\frac{e^{2n}(\bar{f})^{3n}}{n^{2n+\kappa/2}}.
		\end{align}
		\Cref{thm:barany1} shows in particular that $\displaystyle (\bar{f})^{3n}=\frac{\AP^*(K)^{3n}}{4^n}$, and that $\displaystyle \frac{f_j}{f_j+f_{{j-1}}}=w_j$.
		
	\end{proof}

	\section{General case}
	
	This section is dedicated to the proof of \Cref{thm:global}, where we exhibit the link between $\mathbf{P}$ and the set $\polytau$. To do so, we need to understand how the limit shape is structured in a convex polygon. The next results are stated in this regard.
	
	\begin{Lemma}\label{lem:tangency}
		Fix $\kappa\geq3.$ For any $K\in\polyk$, the set $\Dom{K}$ is tangent to $K$ at $m$ points for some $m\in\ent{3}{\kappa}$, and its boundary $\partial\Dom{K}$ is composed of parabola arcs between these tangency points.
	\end{Lemma}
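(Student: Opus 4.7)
The statement splits into two claims: (a) the number $m$ of tangency points satisfies $3 \leq m \leq \kappa$, and (b) between two consecutive tangency points, $\partial\Dom{K}$ is a parabola arc. The plan is to invoke Bárány \cite{barany1} for (b) and for the upper bound on $m$, and to give a short variational argument for $m \geq 3$.

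First, the upper bound $m \leq \kappa$ is immediate since $K$ has only $\kappa$ sides, and tangency of $\Dom{K}$ to $K$ is by definition tangency to one of these sides. For (b), it is proved in \cite{barany1} that the optimizer of the affine perimeter among convex subsets of any convex body $K$ with polygonal boundary has a $C^1$ boundary composed of finitely many parabola arcs, containing no line segment, each arc joining two consecutive contact points with $\partial K$ and being tangent to the corresponding sides at these points. I would quote this result essentially verbatim.

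The main point to establish is $m\ge 3$, which I would prove by contradiction. Suppose $m\le 2$ and recall that affine perimeter is invariant under translations and scales as $\AP(\lambda C)=\lambda^{2/3}\AP(C)$ under a homothety of ratio $\lambda>0$.
If $m=0$, then $\Dom{K}$ lies in the interior of $K$, so a homothety of ratio $1+\varepsilon$ centered at any interior point of $\Dom{K}$ still produces a convex subset of $K$ for $\varepsilon>0$ small enough, yet strictly increases the affine perimeter, contradicting the maximality of $\Dom{K}$.
If $m=1$, say tangent only to side $i$, then a small translation of $\Dom{K}$ in the direction of the inward normal to side $i$ yields a convex subset of $K$ that touches no side of $K$; I am then reduced to the previous case after a further homothety.
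If $m=2$, say tangent to sides $i_1,i_2$ at points $\pp_1,\pp_2$, I would distinguish whether the lines carrying these sides meet at a point $V\in\RR^2$ or are parallel. In the first case, a homothety centered at $V$ with ratio $1+\varepsilon$ maps $\Dom{K}$ to a convex set still tangent to both lines (at slightly shifted points), hence contained in the two corresponding closed half-planes; by compactness it remains at positive distance $\delta>0$ from the other $\kappa-2\ge 1$ sides, so for $\varepsilon$ small enough it stays inside $K$, while its affine perimeter is multiplied by $(1+\varepsilon)^{2/3}>1$, contradicting maximality. In the parallel case, the same argument works with an area-preserving affine scaling in the direction perpendicular to the two sides, followed by a global homothety using the extra room in the third direction given by any side $i_3\notin\{i_1,i_2\}$.

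The only subtle point in this plan is ensuring that, in the $m=2$ case, the homothety (or affine transformation) keeps the tangency points on the segments that form the sides, not merely on the supporting lines; but this is automatic for $\varepsilon$ sufficiently small since the original tangency points lie in the interior of the sides (tangency at a vertex would contradict the fact that the parabola arcs composing $\partial\Dom{K}$ are smoothly tangent to $\partial K$ at their endpoints, by part (b)). Combining (a) and (b) then gives the statement.
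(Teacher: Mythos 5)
Your proof follows essentially the same route as the paper: the paper also rules out $m\le 2$ by observing that one could then fit a dilated-and-translated copy of $\Dom{K}$ inside $K$, whose affine perimeter exceeds $\AP^*(K)$ because $\AP(\varphi(C))=\det(\varphi)^{1/3}\AP(C)$, and it likewise refers to B\'ar\'any \cite{barany1} for the parabola-arc structure of $\partial\Dom{K}$. Your write-up simply makes explicit the case analysis ($m=0,1,2$, including the vertex-tangency subtlety) that the paper leaves to ``a simple figure''.
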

	
	\begin{proof}
		If $\Dom{K}$ were tangent only at $m<3$ points of $\partial K$, a simple figure shows that there would be some room to set in $K$ a set bigger than $\Dom{K}$ obtained by applying a dilatation and a translation. 
	\end{proof}
	
	\begin{Proposition}[\cite{barany1}]\label{prop:bara}
		Let $K\in\polyk$. The boundary of the set $\Dom{K}$ is composed of $m$ pieces of parabola arc (with $3\leq m\leq \kappa$ by \Cref{lem:tangency}) whose tangents at the endpoints are tangent to $\Dom{K}$ as well.
	\end{Proposition}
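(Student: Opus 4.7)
The plan is to derive the proposition from a local variational argument applied to $\Dom{K}$, which by definition maximizes the affine perimeter among convex subsets of $K$. First I would invoke \Cref{lem:tangency} to fix the $m \in \{3, \ldots, \kappa\}$ tangency points $\pp_1, \ldots, \pp_m$ of $\Dom{K}$ with $\partial K$; these split $\partial \Dom{K}$ into $m$ convex arcs $\gamma_1, \ldots, \gamma_m$, each joining two consecutive tangency points and lying in the open interior of $K$ except at its endpoints.

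Next I would argue that each $\gamma_j$ is a critical point of the affine perimeter functional $\int \kappa^{1/3}\, ds$ (where $\kappa$ denotes the usual curvature) under interior perturbations that preserve convexity and keep the two endpoints fixed. Writing out the first variation along a compactly supported normal field and applying the fundamental lemma of the calculus of variations yields an Euler--Lagrange equation amounting to the vanishing of the affine curvature along $\gamma_j$. Since curves of vanishing affine curvature are exactly parabolas, each $\gamma_j$ is a parabola arc, which establishes the first half of the proposition.

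To obtain the tangency condition at the endpoints, I would exploit the fact that each $\pp_j$ can be freely slid along the corresponding side of $K$. A variation of this type is admissible for the maximization of $\AP$ and yields the standard transversality (natural boundary) condition, which forces the tangent direction of $\gamma_j$ at $\pp_j$ to coincide with the line carrying the side of $K$ through $\pp_j$. Since this holds for both arcs $\gamma_{j-1}$ and $\gamma_j$ meeting at $\pp_j$, the side of $K$ is a common tangent there, and hence a supporting line of the convex set $\Dom{K}$ at $\pp_j$, which is exactly the claim.

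The main obstacle will be the careful justification of the admissibility of these perturbations: one must verify that for small $\varepsilon$ the perturbed curve still bounds a convex subset of $K$, especially in a neighbourhood of each tangency point where the interaction of the perturbation with $\partial K$ is delicate, and that the endpoint-sliding variations are compatible with the convexity and containment constraints. This bookkeeping, together with the explicit computation of the first variation of $\int \kappa^{1/3}\, ds$ and the derivation of the transversality condition, is precisely the content of the analysis carried out in \cite{barany1} (properties (3.1) and (3.2) there), so in practice the cleanest route is to cite Bárány's argument directly rather than reproduce it in full.
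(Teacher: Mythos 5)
Your proposal ultimately rests on citing B\'ar\'any's properties (3.1) and (3.2) from \cite{barany1}, which is exactly what the paper does: the proposition is stated as a quoted result of B\'ar\'any and no proof is given in the text. Your variational heuristic (interior perturbations of $\int \kappa^{1/3}\,ds$ forcing vanishing affine curvature, hence parabola arcs, plus the endpoint-sliding transversality condition giving tangency to the sides) is a correct outline of why the result holds, and you rightly note that the regularity and admissibility issues it glosses over are precisely the content of B\'ar\'any's argument, so the approach is essentially the same as the paper's.
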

	
	\paragraph{Notation.} As a consequence of \Cref{prop:bara}, we denote by $\mathcal{S}(K)$ the set of compact convex sets of $\RR^2$ such that $S\in\mathcal{S}(K)$ if and only if $\Dom{K}\subset S\subset K$. In particular, if we denote $\mathsf{p}_1,\ldots,\mathsf{p}_m$ the tangency points of $\Dom{K}$ with $K$, $\Dom{K}$ is also tangent to $S$ at all $\mathsf{p}_1,\ldots,\mathsf{p}_m$.

	\begin{Lemma}\label{lem:reduction}
		Let $K\in\mathbf{P}$. Let $S\in\mathcal{S}(K)$ and let $\zn$ be taken under $\mathsf{Q}^{(n)}_{S}$ for some $n\geq0$. Then for all $\varepsilon>0$, we have
		\begin{align}\label{eq:limitst2}
			\P\left(d_H\left(\CH\left(\zn\right),\Dom{K}\right)>\varepsilon\right)\cvg0.
		\end{align}
	\end{Lemma}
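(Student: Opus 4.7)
The plan is to reduce to B\'ar\'any's limit shape theorem (\Cref{thm:lst}) applied directly to the set $S$, which gives $\CH(\zn)\to \Dom{S}$ in probability for the Hausdorff distance, and then to identify $\Dom{S}=\Dom{K}$. Once this identification is established, \eqref{eq:limitst2} follows immediately from \Cref{thm:lst}.

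The key step is therefore to prove that $\Dom{S}=\Dom{K}$ whenever $S\in\mathcal{S}(K)$. First, since $S\subset K$, every convex subset of $S$ is a convex subset of $K$, so taking the supremum of affine perimeters over a smaller class gives
\[
\AP^*(S)=\max_{C\text{ convex},\,C\subset S}\AP(C)\leq\max_{C\text{ convex},\,C\subset K}\AP(C)=\AP^*(K).
\]
In the other direction, the hypothesis $S\in\mathcal{S}(K)$ means that $\Dom{K}\subset S$, and hence $\Dom{K}$ is itself a convex subset of $S$, so
\[
\AP^*(S)\geq \AP(\Dom{K})=\AP^*(K).
\]
Combining these two inequalities gives $\AP^*(S)=\AP^*(K)$, and furthermore $\Dom{K}$ realises the maximum in the definition of $\AP^*(S)$. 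By the uniqueness of the maximiser of the affine perimeter among convex subsets (B\'ar\'any \cite{barany1}), this forces $\Dom{S}=\Dom{K}$.

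Applying \Cref{thm:lst} to $S$, for any $\varepsilon>0$ we have
\[
\frac{\Leb_{2n}\left(S_n(S,\varepsilon)^c\right)}{\Leb_{2n}\left(\mathcal{C}_S(n)\right)}\cvg 0,
\]
which by the very definition \eqref{not:sng} of $S_n(S,\varepsilon)$ reads $\P\bigl(d_H(\CH(\zn),\Dom{S})>\varepsilon\bigr)\to 0$ under $\Qn{S}$. Replacing $\Dom{S}$ by $\Dom{K}$ via the above identification yields \eqref{eq:limitst2}. The only mildly delicate point is the appeal to uniqueness of the maximiser of the affine perimeter, which is exactly the content of B\'ar\'any's result already used throughout the paper to define $\Dom{\cdot}$ unambiguously; apart from this, the argument is purely a containment chain on affine perimeters.
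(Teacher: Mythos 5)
Your proposal is correct and follows essentially the same route as the paper: the paper's (very terse) proof is exactly the sandwich $\AP(\Dom{K})\leq\AP^*(S)\leq\AP^*(K)$ combined with uniqueness of the affine-perimeter maximiser and an application of B\'ar\'any's limit shape theorem (\Cref{thm:lst}) to $S$. You have merely spelled out the identification $\Dom{S}=\Dom{K}$ and the translation of \Cref{thm:lst} into a statement under $\Qn{S}$, which the paper leaves implicit.
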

	This means that for any set $S\in\mathcal{S}(K)$, the convex hull of $\zn$ taken under $\mathsf{Q}^{(n)}_{S}$ converges a.s. for the Hausdorff distance to the same deterministic limit as if the points were taken under $\mathsf{Q}^{(n)}_{K}$, which is nothing but the set $\Dom{K}$.
	\begin{proof}
		By B\'ar\'any's limit shape theorem, the deterministic limit $\Dom{K}$ is characterized as the unique set such that \[\AP^*(K)=\AP(\Dom{K})=\max_{\substack{S \text{convex} \\ S\subset K}} \AP(S).\]
		Now, for all $S\in\mathcal{S}(K)$, we have $\AP(\Dom{K})\leq\AP^*(S)\leq \AP^*(K)$.
	\end{proof}
	
	\subsection*{The general result: proofs of \Cref{lem:rep} and \Cref{thm:global}:}
	
	The idea of these proofs is to find sort of a ``converse'' to the previous lemma. Indeed, \Cref{lem:reduction} allows one to shrink a convex polygon while keeping its limit shape intact, as long as the limit shape is not shrunk at the same time. Is there any way we could ``increase'' (in some sense) a convex polygon, while keeping its limit shape intact ?
	
	\begin{proof}
		The case $\kappa=3$ does not need \Cref{thm:global}. Indeed, the limit shape in an equilateral triangle is tangent to all three sides, and since every triangle is obtained by an affine map of an equilateral triangle, the limit shape of a triangle is tangent to all sides as well.
		
		Fix $\kappa\geq3$ and let us rather assume that $K$ is in $\mathbf{P}_{\kappa+1}$, and that $\Dom{K}$ is tangent to exactly $\kappa$ sides of $K$ (that is, all sides except one). 
		{\it We focus on this case for now, and explain at the end why the same arguments as those developed in the following stay valid for any $K$ in $\mathbf{P}_{\kappa+j}$, $j\geq1$, having $\Dom{K}$ tangent to exactly $\kappa$ sides of $K$.}
		
		Denote by $\vec{n}_1,\ldots,\vec{n}_{\kappa+1}$ the successive sides of $K$, viewed as vectors (or directions) in this proof. By hypothesis, the set $\Dom{K}$ is tangent to $K$ on $\kappa$ sides and thus we assume that the $(\kappa+1)^{th}$ side is the missing one, as in \Cref{chap3:fig18}. Consider the $\kappa$ straight lines directed by $\left(\vec{n}_i\right)_{1\leq i\leq\kappa}$; they delineate a convex polygon $\Ktau\in\polyk$ (with $\kappa$ sides) that contains $K$. We let $\mathsf{X}$ be the vertex of $\Ktau$ being the intersection point between the lines of directions $\vec{n}_\kappa$ and $\vec{n}_1.$
		
		\begin{figure}[hbtp]
			\centering
			\input{Figure_7bis.txt}
			\caption{In cyan, an example of polygon $K$ whose limit shape (the boundary of $\Dom{K}$, in red) is not tangent to one side. The polygon delineated by the dotted black lines is the polygon $\Ktau$ : the boundary of $\Dom{K}$ is now tangent to every side of $\Ktau$. We will show that $\Dom{K}=\Dom{\Ktau}$.} \label{chap3:fig18}
		\end{figure}

		Let us prove that $\Ktau$ satisfies \eqref{jpp8}, \eqref{jpp10} and \eqref{jpp9}. To do so, consider the family of convex polygons $(K_{t})_{0\leq t\leq1}$ defined as 
		\begin{enumerate}
			\item[{\it (i)}] $K_0=K,$
			\item[{\it (ii)}] $K_1=\Ktau$,
			\item[{\it (iii)}] for $t>0$, $K_t$ is the polygon $K_0$ where we ``pushed back'' at distance $t$ the $(\kappa+1)^{th}$ side, with direction $\vec{n}_{\kappa+1}$ (which is thus at distance $1-t$ from $\mathsf{X}$).\footnote{More precisely, this is the intersection between $\Ktau$ and the halfplane delineated by the straight line with direction $\vec{n}_{\kappa+1}$ at distance $1-t$ from $X$.} Of course, we have for all $t\in\zerun$,  $K_t\subset\Ktau$.
		\end{enumerate}

		We illustrate this definition in \Cref{fig:Ktau}. Note that, with this definition of $(K_t)$ we have for all $t\in\zerun$,
		\begin{align}\label{eq:continuityK}
			d_H(K_s,K_t)\underset{s\to t}{\longrightarrow} 0.
		\end{align}

		\begin{figure}[hbtp]
			\centering
			\begin{minipage}{0.45\textwidth}
				\centering
				\input{Figure_8.txt}
			\end{minipage}
			\hspace{1ex}
			\begin{minipage}{0.45\textwidth}
				\centering
				\input{Figure_7.txt}
			\end{minipage}
			\caption{The polygon $K_t$, drawn in blue at times $t=0$ and $t=1$, and its limit shape in red.}
			\label{fig:Ktau}
		\end{figure}
		
		\vspace{0.2cm}
		We are going to prove that for all $t\in\zerun,$ we have $\Dom{K_t}=\Dom{K}$. To do so, we prove that the map $F:t\mapsto \AP^*(K_t)$ is continuous, and then we will deduce that it must be constant on $\zerun$.
		
		\paragraph{1. The map $F:t\mapsto \AP^*(K_t)$ is continuous.}
		First, notice that since the affine perimeter is an upper-semicontinuous map (a proof of this claim can be found in \cite{lutwak}[Proposition 4.4]), so is $F$. In addition, $F$ is non-decreasing on $\zerun$ by definition.
		
		Let us assume now that $F$ is not continuous, and denote by $t^*>0$ the moment of a jump of $F$, so that $\limsup_{t\to t*}F(t)<F(t^*)$. Let then $\alpha$ be a real value in the open interval $(\limsup_{t\to t*}F(t),F(t^*))$.
		
		There exists a dilatation by some $0<\lambda<1$ such that $\AP(\lambda\Dom{K_{t^*}})=\lambda^{2/3}F(t^*)=\alpha$. Since $\lambda\Dom{K_{t^*}}$ is just a ``shrunk'' version of $\Dom{K_{t^*}}$, there exists $0<\tilde{t}<t^*$ with $\lambda\Dom{K_{t^*}}\subset K_{\tilde{t}}$. This means in particular that $F(\tilde{t})\geq \alpha>\limsup_{t\to t*}F(t)$ and then $F(\tilde{t})=F(t^*)$ because $F$ has no image in the interval $(\limsup_{t\to t*}F(t),F(t^*))$. Since $F$ is nondecreasing, then $F$ is constant on the interval $[\tilde{t},t^*]$: this is absurd since $\limsup_{t\to t*}F(t)<F(t^*)$.
		
		\paragraph{2. The map $F:t\mapsto \AP^*(K_t)$ is constant.}
		Assume it is not : there exists $t>0$ such that we have $F(t)>F(0)$. By continuity, there exists $s\geq0$ such that $[s,t]\subseteq [0,t]$ and $F([s,t])=[F(0),F(t)]$, and $F$ is constant on $[0,s]$.
		
		For $n$ large enough, there exists a sequence $(s_n)\in [s,t]$ with $F(s_n)=F(s)+1/n$. Consider the sequence $(\mathsf{D}_{s_n})_n:=(\Dom{K_{s_n}})_n$, it is a sequence of compact convex sets that are all subsets of $K_1$, and since the set of compact convex subsets of $K_1$ is compact for $d_H$ (see \cite{istratescu2001fixed}), there exists a subsequence $n_k$ such that $(\mathsf{D}_{s_{n_k}})$ converges to a convex set $\bar{\mathsf{D}}$ for the Hausdorff distance.
		
		By upper-semicontinuity of the affine perimeter, we have $\AP(\bar{\mathsf{D}})\geq \limsup_{k\to+\infty} F(s_{n_k})=F(s).$ In addition, since $\bar{\mathsf{D}}=\lim_{k\to+\infty} \mathsf{D}_{s_{n_k}} $ and $\mathsf{D}_{s_{n_k}}\subseteq K_{s_{n_k}}$, we have $\bar{\mathsf{D}}\subseteq K_s$ (since $d_H(K_{s_{n_k}},K_s)\underset{k\to+\infty}{\longrightarrow}0$ by \eqref{eq:continuityK}). By unicity, this means $\bar{\mathsf{D}}=\Dom{K_s}=\Dom{K_0}.$
		
		Let now $\mathscr{L}_s$ be the segment corresponding to the $(\kappa+1)^{th}$ side of $K_s$, the one we added  with direction $\vec{n}_{\kappa+1}$ at distance $(1-s)$ of $\mathsf{X}$. 
		
		All sets $\mathsf{D}_{s_n}$ (for $n$ large enough) intersect with $K_s$ (because $F(s_n)>F(s)$ and thus, $\mathsf{D}_{s_n}\subsetneq K_s$ would go against the fact that $\mathsf{D}_s$ realizes the maximum of the affine perimeter among convex subsets of $K_s$), meaning we have $d_H(\mathsf{D}_{s_{n_k}},\mathscr{L}_s)=0$; hence, when $k\to+\infty$, we get $d_H(\bar{\mathsf{D}},\mathscr{L}_s)=0$ by continuity. But since $\Dom{K_0}=\Dom{K_s}=\bar{\mathsf{D}}\subset K_s$, this means $\Dom{K_0}$ must intersect (or at least be tangent with) $\mathscr{L}_s$, which is impossible because $d_H(\Dom{K_0},\mathscr{L}_s)\geq d_H(\Dom{K_0},\mathscr{L}_0)$ and we assumed that $d_H(\Dom{K_0},\mathscr{L}_0)>0$. 	

		This proves that $F$ is constant, hence $\Dom{K_{1}}=\Dom{K_{0}}=\Dom{K_{\mathcal{T}}}$ and this proves \eqref{jpp8} and \eqref{jpp10} simultaneously. 
		
		To prove \eqref{jpp9}, the properties $\Dom{K}=\Dom{K_\mathcal{T}}$ and $K\subset \Ktau$, yield that for $\varepsilon>0$ small enough, there exists $N\in\NN$ such that for all $n\geq N$, we have $S_n(K,\varepsilon)=S_n(\Ktau,\varepsilon)$ (see notation \eqref{not:sng}). The last conclusion stated in \Cref{thm:lst} enables this conclusion.
		
		\paragraph{3. A remark about how we may replace $\mathscr{L}_s$.}
		In the second point of this proof, we use the fact that $d_H(\mathsf{D}_{s_{n_k}},\mathscr{L}_s)=0$ and we take the limit on $k$ to obtain $d_H(\bar{\mathsf{D}},\mathscr{L}_s)=0$. In this argument, we do not use the fact that $\mathscr{L}_s$ is a segment.
		Assume now that instead of the $(\kappa+1)^{th}$ side of $K_0$ \ie the segment $\mathscr{L}_0$, we have a concave closed shape $\mathscr{S}_0$ joining the extremities of the $1^{st}$ and $\kappa^{th}$ sides of $K_0$ forming a convex set $S_0$, as long as $S_0\subseteq K_1$. We still have $\Dom{K_0}\subset S_0$ and $d_H(\Dom{K_0},\mathscr{S}_0)>0$. We give two examples of sets $S_0$ in \Cref{fig:S0}.
		
		Let us further define a family of increasing convex sets $(S_t)_{t\in\zerun}$ such that :
		\begin{enumerate}
			\item[{\it (i)}] $S_0$ is the set we just defined,
			\item[{\it (ii)}] The function $t\mapsto S_t$ is non-decreasing for the inclusion order, continuous for $d_H$ and satisfies $S_1 = K_1$.
		\end{enumerate}
		
		Under this condition, this means that for all $t\in[0,1)$, the boundary of the set $S_t$ is composed of the $\kappa$ polygonal lines forming the closed set $\partial K_1\cap\partial S_t $, which are joined together by a concave closed shape $\mathscr{S}_t$ lying in the interior of $K_1$.
		
		\begin{figure}[hbtp]
			\centering
			
			\begin{minipage}{0.45\textwidth}
				\centering
				\input{Figure_S1.txt}
			\end{minipage}
			\hspace{1ex}
			\begin{minipage}{0.45\textwidth}
				\centering
				\input{Figure_S2.txt}
			\end{minipage}
			
			\caption{Two kinds of convex body $S_0$, drawn in blue at times $t=0$, and their same limit shape in red, which is $\Dom{K_0}$. On the left, the concave shape $\mathscr{S}_0$ is a smooth map, while on the right it is composed of several polygonal lines.}
			\label{fig:S0}
		\end{figure}
		
		\vspace{0.2cm}
		Had we used this family instead of $(K_t)$ in the proof of point $2.$, this would have just changed the closed domain $\mathscr{L}_s$ into $\mathscr{S}_s$ and the rest of the proof would have remained the same. In the end, we would get a slightly better result as this leads to the fact that $\Dom{K_0}=\Dom{S_t}=\Dom{K_1}$ for all $t\in\zerun$.

		\paragraph{4. The case $\mathbf{P}_{\kappa+j}$.}
		We can now generalize this reasoning to the case where $K$ is in $\mathbf{P}_{\kappa+j}$, $j\geq2$, and $\Dom{K}$ is tangent to exactly $\kappa$ sides of $K$. Assume first that the $j$ sides to which $\Dom{K}$ is not tangent are consecutive.
		Denote once more by $\vec{n}_1,\ldots,\vec{n}_{\kappa+j}$ the successive sides of $K$, and assume first that the last $j$ sides are the ``missing'' ones. It means that there is an arc of parabola joining the $n_\kappa^{th}$ and $n_1^{st}$ sides, to which it is also tangent. In particular, the angle between the $n_\kappa^{th}$ and $n_1^{st}$ side must be closing, or the parabola could not be tangent to these sides. Consider the $\kappa$ straight lines directed by $\left(\vec{n}_i\right)_{1\leq i\leq\kappa}$; they delineate a convex polygon $\Ktau\in\polyk$ (with $\kappa$ sides) that contains $K$. 
		
		This means that we can construct a family $(S_t)$ satisfying conditions {\it (i)} and {\it (ii)} of point $3.$ (just as on the right of \Cref{fig:S0}), where the concave closed shape $\mathscr{S}_0$ comprises the last $j$ ``missing'' sides lying in the interior of $S_1:=\Ktau$.
		In this case, we have $S_0:=K$; by the previous point we obtain $\Dom{\Ktau}=\Dom{K}$, which proves this case of consecutive ``missing'' sides.
				
		If the $j$ sides to which $\Dom{K}$ is not tangent are not consecutive, we can still denote by $\vec{n}_1,\ldots,\vec{n}_{\kappa+j}$ the directions of the successive sides of $K$, and assume that the last $i$ sides, with $1\leq i<j$ are ``missing'' sides. Consider the $\kappa+j-i$ straight lines directed by $\left(\vec{n}_q\right)_{1\leq q\leq\kappa+j-i}$; they delineate a convex polygon $K_1$ with $\kappa+j-i$ sides that contains $K$. The exact same arguments as before allow us to prove that $K\in\mathcal{S}(K_1)$ and \[{\sf Area}(K)^n~\P_{K}(n)\underset{n\to\infty}{\sim} {\sf Area}(K_1)^n~\P_{K_1}(n).\] Step by step, treating each bunch of consecutive missing sides one after the other, there will exist a finite sequence of convex polygons $K_1\subset\ldots\subset K_k= \Ktau$, with $\Ktau\in\polyktau$ (with $\kappa$ sides) and $K_p\in\mathcal{S}(K_{p+1})$ such that \[{\sf Area}(K_p)^n~\P_{K_p}(n)\underset{n\to\infty}{\sim} {\sf Area}(K_{p+1})^n~\P_{K_{p+1}}(n)\] for all $p\in\ent{1}{k-1}$, so that the last step amounts to proving the previous case. That proves \eqref{jpp9}.

	\end{proof}
	
	\section{Determining $\Dom{K}$}\label{sec4}
	
	Let $K\in\polyk$. Determining $\Dom{K}$ out of its characterization as $\displaystyle\Dom{K}=\arg \max_{\substack{S \text{convex set} \\ S\subset K}}\AP(S)$ is not trivial, hence it is neither trivial to understand whether $K$ is in $\polyktau$ or not. We present in this section an algorithm that allows us to solve this question. It relies on the previous proof of \Cref{thm:global} and aims at finding the convex set $\Ktau$.\par
	
	We consider the straight lines $(d_1),\ldots,(d_\kappa)$ that contain, respectively, the sides $r_1,\ldots,r_\kappa$ of $K$.\\
	
	\begin{Notation}
		For all $I\subset\entk$ such that $\abso{I}\geq3$, consider the set $K_I\subset \RR^2$ defined as the set delineated by the $\abso{I}$ straight lines $(d_i)_{i\in I}$. Set then 
		\[{\sf Valid}(K)=\{I\subset\entk\text{ such that }K_I\text{ is a compact convex polygon}\}.\]
		For such a $I\in{\sf Valid}(K)$, we denote by $r^{(I)}_1,\ldots,r^{(I)}_{\abso{I}}$ the side-lengths of $K_I$, and $\theta^{(I)}_1,\ldots,\theta^{(I)}_{\abso{I}}$ its internal angles.
	\end{Notation}
	
	\subsection{Algorithm}
	Set $\AP^*=0$ and $I^*=\emptyset$. These quantities will keep the current best candidate for $K_\mathcal{T}$ in the memory.\\
	{\bf For all} $I\in {\sf Valid}(K)$ {\bf do}:
	
	\hspace{1cm}\begin{minipage}{.8\textwidth}%
	\textbf{Step 1:} Consider the solution to the system in the unknowns $f^{(I)}[I]$;
			\[\left\{
			\begin{array}{rll}\displaystyle
				f^{(I)}_j(f^{(I)}_j+f^{(I)}_{{j+1}})(f^{(I)}_j+f^{(I)}_{{j-1}})& = r^{(I)}_j\cdot r^{(I)}_{\wj}\cdot\sin(\theta^{(I)}_j),&\quad j\in I,\\
				f^{(I)}_j&\geq 0,&\quad j\in I.
			\end{array}
			\right.
			\]
			As said before, we know that such a solution to this system exists, but it is rarely explicit: a numerical resolution is to be considered.
			Now, the family $f^{(I)}[I]$ encodes a curve composed of parabola arcs tangent to every side of $K_I$. We therefore need to determine the tangency points.\vspace{0.1cm}
	\end{minipage}
	
	\hspace{1cm}\begin{minipage}{.8\textwidth}%
		\textbf{Step 2:} Compute
			\begin{align*}
				x^{(I)}_j=\frac{f^{(I)}_j}{f^{(I)}_{{j-1}}+f^{(I)}_j},\quad j\in I.
			\end{align*}
			\vspace{0.1cm}
	\end{minipage}
	
	\hspace{1cm}\begin{minipage}{.8\textwidth}%
		\textbf{Step 3:} Compute the set $S_I$ defined as the convex set whose boundary is composed of parabola arcs between the points determined by $x^{(I)}[I]$ on the sides of $K_I$. By construction, we have $S_I=\arg \max_{S\in\mathcal{Z}_{K_I}}\AP(S).$\vspace{0.1cm}
	\end{minipage}
	
	\hspace{1cm}\begin{minipage}{.8\textwidth}%
		\textbf{Step 4:} Check that $S_I$ is contained in $K$. If not, reject this $I$. 
			If it is, compute $\AP(S_I)$ with \eqref{eq:AP}. If $\AP^*\leq \AP(S_I)$ then we update the current best candidate for $\Dom{K}$; thus, we set \[I^*\leftarrow I \text{ and } \AP^*\leftarrow \AP(S_I).\]\vspace{0.1cm}
	\end{minipage}
	
	\textbf{Conclusion:} Among all $I$, one is realizing the convex polygon $K_I=K_\mathcal{T}$ of \Cref{thm:global}. Therefore,
	\[\AP^*(K)=\max_{I\in {\sf Valid}(K)}\AP(S_I)\]
	The final value $I^*$ we kept out of Step 3 gives $\Dom{K}=S_{I^*}.$

	\subsection*{Acknowledgements}	
	\noindent Once again, my sincere gratitude goes to Jean-François Marckert for his priceless help and guidance throughout the research and writing of this paper.
	
	\noindent Many thanks to the referees for their wise comments and careful revision of this paper, which increased both the readability of this paper, and the accuracy of the proofs.
	
	\bibliographystyle{abbrv}
	\bibliography{godlikebib.bib}

@article{Valtr1995,
	author = {Valtr, Pavel},
	journal = {Discrete and computational geometry},
	keywords = {random points; convex polygon; combinatorial treatment of approximating grid versions},
	number = {3-4},
	pages = {637-643},
	title = {Probability that n Random Points Are in Convex Position.},
	url = {http://eudml.org/doc/131388},
	volume = {13},
	year = {1995},
}

@article{lutwak,
	title = {Extended affine surface area},
	journal = {Advances in Mathematics},
	volume = {85},
	number = {1},
	pages = {39-68},
	year = {1991},
	issn = {0001-8708},
	doi = {https://doi.org/10.1016/0001-8708(91)90049-D},
	url = {https://www.sciencedirect.com/science/article/pii/000187089190049D},
	author = {Erwin Lutwak}
}

@article{Barany2000,
	author = {Bárány, Imre and Rote, Günter and Steiger, William and Zhang, Cun-Hui},
	year = {2000},
	month = {01},
	pages = {35-50},
	title = {A Central Limit Theorem for Convex Chains in the Square},
	volume = {23},
	journal = {Discrete and Computational Geometry},
	doi = {10.1007/PL00009490}
}

@article{Buchta,
	author = {Buchta, Christian},
	year = {2006},
	month = {12},
	pages = {247 - 254},
	title = {The Exact Distribution of the Number of Vertices of a Random Convex Chain},
	volume = {53},
	journal = {Mathematika},
	doi = {10.1112/S0025579300000127}
}

@book{istratescu2001fixed,
	title={Fixed Point Theory: An Introduction},
	author={Istratescu, V.I.},
	isbn={9781402003011},
	series={Mathematics and Its Applications},
	url={https://books.google.fr/books?id=Lu4jz749RZgC},
	year={2001},
	publisher={Springer Netherlands}
}

@article{hilhorst:hal-00330444,
	TITLE = {{Sylvester's question and the Random Acceleration Process}},
	AUTHOR = {Hilhorst, H. J. and Calka, Pierre and Schehr, G.},
	URL = {https://hal.science/hal-00330444},
	JOURNAL = {{Journal of Statistical Mechanics: Theory and Experiment}},
	HAL_LOCAL_REFERENCE = {LPT-ORSAY 08-61},
	PUBLISHER = {{IOP Publishing}},
	PAGES = {P10010},
	YEAR = {2008},
	HAL_ID = {hal-00330444},
	HAL_VERSION = {v1}
}

@book{blaschkedifferential,
	title={Vorlesungen {\"U}ber Differentialgeometrie und Geometrische Grundlagen von Einsteins Relativit{\"a}tstheorie: II Affine Differentialgeometrie},
	author={Blaschke, Wilhelm and Reidemeister, Kurt},
	year={1923},
	publisher={Springer}
}

@article{leichtweiss1986,
	title={Zur affinoberfl{\"a}che konvexer k{\"o}rper},
	author={Leichtwei{\ss}, Kurt},
	journal={manuscripta mathematica},
	volume={56},
	pages={429--464},
	year={1986},
	publisher={Springer}
}

@article{schutt1993,
	title={On the affine surface area},
	author={Sch{\"u}tt, Carsten},
	journal={Proceedings of the American Mathematical Society},
	pages={1213--1218},
	year={1993},
	publisher={JSTOR}
}

@article{barany2,
	ISSN = {00911798},
	URL = {http://www.jstor.org/stable/2652854},
	abstract = {For a convex body K in the plane, let p(n, K) denote the probability that n random, independent, and uniform points from K are in convex position, that is, none of them lies in the convex hull of the others. Here we determine the asymptotic behavior of p(n, K) by showing that, as n goes to infinity, $n^{2^n} \sqrt{p(n, K)}$ tends to a finite and positive limit.},
	author = {B{\'a}r{\'a}ny, Imre},
	journal = {The Annals of Probability},
	number = {4},
	pages = {2020--2034},
	publisher = {Institute of Mathematical Statistics},
	title = {Sylvester's Question: The Probability That n Points Are in Convex Position},
	urldate = {2023-06-08},
	volume = {27},
	year = {1999}
}

@article{barany3,
	author = {B{\'a}r{\'a}ny, Imre},
	journal = {Discrete Comput. Geom.},
	number = {3-4},
	pages = {279-295},
	title = {The limit shape of convex lattice polygons},
	urldate = {2023-06-08},
	volume = {13},
	year = {1995}
}

@article{valtr1996probability,
	title={The probability that n random points in a triangle are in convex position},
	author={Valtr, Pavel},
	journal={Combinatorica},
	volume={16},
	number={4},
	pages={567--573},
	year={1996},
	publisher={Springer}
}

@article{marckert2017probability,
	ISSN = {01030752, 23176199},
	URL = {http://www.jstor.org/stable/26382090},
	abstract = {Pick n random points x₁,...,xn uniformly and independently in a disk and consider their convex hull C. Let ${\mathrm{P}}_{\mathrm{D}}^{\mathrm{n},\mathrm{m}}$ be the probability that exactly m points among the xi's are on the boundary of the convex hull of {x₁,...,xn} (so that ${\mathrm{P}}_{\mathrm{D}}^{\mathrm{n},\mathrm{n}}$ is the probability that the xi's are in a convex position). In the paper, we provide a formula for ${\mathrm{P}}_{\mathrm{D}}^{\mathrm{n},\mathrm{m}}$.},
	author = {Jean-François Marckert},
	journal = {Brazilian Journal of Probability and Statistics},
	number = {2},
	pages = {320--337},
	publisher = {[Brazilian Statistical Association, Institute of Mathematical Statistics]},
	title = {The probability that n random points in a disk are in convex position},
	urldate = {2023-10-19},
	volume = {31},
	year = {2017}
}

@article{barany1,
	title={Affine perimeter and limit shape.},
	author={B{\'a}r{\'a}ny, Imre},
	year={1997},
	journal={Journal für die reine und angewandte Mathematik},
	volume={484},
	pages={71-84},
	publisher={Walter de Gruyter, Berlin/New York Berlin, New York}
}

@article{sinai,
	title={Probabilistic approach to the analysis of statistics for convex polygonal lines.},
	author={Sinai, Y.G.},
	journal={Funct Anal Its Appl},
	volume={28},
	pages={108-113},
	year={1994},
	url={https://doi.org/10.1007/BF01076497}
}

@article{vershik,
	title={The limit shape of convex lattice polygons and related topics.},
	author={Vershik, A.M.},
	journal={Funct Anal Its Appl},
	volume={28},
	pages={13-20},
	year={1994},
	url={https://doi.org/10.1007/BF01079006}
}

@article{BARANY2018143,
	title = {Convex cones, integral zonotopes, limit shape},
	journal = {Advances in Mathematics},
	volume = {331},
	pages = {143-169},
	year = {2018},
	issn = {0001-8708},
	doi = {https://doi.org/10.1016/j.aim.2018.03.031},
	url = {https://www.sciencedirect.com/science/article/pii/S0001870818301178},
	author = {Imre Bárány and Julien Bureaux and Ben Lund},
	keywords = {Convex cones, Integral zonotopes, Integer partitions, Limit shape},
	abstract = {Given a convex cone C in Rd, an integral zonotope T is the sum of segments [0,vi] (i=1,…,m) where each vi∈C is a vector with integer coordinates. The endpoint of T is k=∑1mvi. Let T(C,k) be the family of all integral zonotopes in C whose endpoint is k∈C. We prove that, for large k, the zonotopes in T(C,k) have a limit shape, meaning that, after suitable scaling, the overwhelming majority of the zonotopes in T(C,k) are very close to a fixed convex set. We also establish several combinatorial properties of a typical zonotope in T(C,k).}
}

@article{Bureaux_2016,
	doi = {10.19086/da1013},
	url = {https://doi.org/10.19086%2Fda1013},
	year = 2016,
	month = {dec},
	publisher = {Alliance of Diamond Open Access Journals},
	pages = {1--15},
	author = {Julien Bureaux and Nathanaël Enriquez},
	title = {On the number of lattice convex chains},
	journal = {Discrete Analysis}
}

@misc{Morin,
	title={Probability that $n$ points are in convex position in a regular $\kappa$-gon : Asymptotic results}, 
	author={Ludovic Morin},
	year={2024},
	eprint={2401.16207},
	archivePrefix={arXiv},
	primaryClass={math.PR}
}

@book{petrov1975sums,
	title={Sums of Independent Random Variables},
	author={Petrov, V.V.},
	isbn={9783540066354},
	lccn={75005766},
	series={Ergebnisse der Mathematik Und},
	url={https://books.google.fr/books?id=JF5OAQAACAAJ},
	year={1975},
	publisher={Springer-Verlag}
}
\end{document}